\newtheorem{thrm}{Theorem}[section]
\newtheorem{lemma}[thrm]{Lemma}
\newtheorem{prop}[thrm]{Proposition}
\newtheorem*{main*}{Main Theorem}
\theoremstyle{definition}
\newtheorem{defn}[thrm]{Definition}
\theoremstyle{remark}
\newtheorem*{question*}{Question}
\numberwithin{equation}{section}
\newcommand{\dbar}{$\bar{\partial}$}
\newcommand{\mdbar}{\bar{\partial}}
\newcommand{\zb}{\bar{z}}
\newcommand{\Lb}{\overline{L}}
\newcommand{\omegab}{\bar{\omega}}
\newcommand{\gammab}{\bar{\gamma}}
\newcommand{\lrl}{\mathcal{L}}
\newcommand{\lrs}{\mathcal{S}}
\begin{document}

\bibliographystyle{plain}

\title{
Dirichlet to Neumann operators and the 
 \dbar-Neumann problem}

\author{Dariush Ehsani}

\address{
Hochschule Merseburg\\
Eberhard-Leibnitz-Str. 2\\
 D-06217 Merseburg\\
 Germany}
 \email{dehsani.math@gmail.com}

\subjclass[2010]{32W05, 32W10, 32W25, 32W50}

\begin{abstract}
  We study the Dirichlet to Neumann operator 
 of the \dbar-Neumann problem, and the relation
  between the \dbar-Neumann 
  boundary conditions and the 
  Dirichlet to Neumann operator.
\end{abstract}

\maketitle

\section{Introduction}
\label{intro}

The \dbar-Neumann problem is an example of a
 boundary value problem with involving an elliptic 
operator but whose boundary conditions lead to 
 non-elliptic equations.  In order to conclude Sobolev
estimates for the solution to the \dbar-Neumann problem
 control (of $L^2$-norms) over derivatives
  in all directions must be
obtained, but the boundary conditions of the problem
 disadvantage one direction.  The boundary 
conditions contain the boundary value operator,
 the Dirichlet to Neumann operator (DNO), giving the
boundary values of the outward derivative of the
 solution to a homogeneous Dirichlet problem.  
In some cases (for example the case of strictly
 pseudoconvex domains) the DNO allows for 
some control of the disadvantaged direction, in other
 cases of weak pseudoconvexity, the situation is
  more delicate.    
 The purpose of this article therefore, is to 
study the DNO of related to the \dbar-Neumann problem
 with particular emphasis on the resulting boundary equations.

The DNO will be written as a pseudodifferential 
 operator acting on a boundary distribution, and
our first results are a reworking of results of 
 Chang, Nagel, and Stein in \cite{CNS92}.
It is well known that to highest order the 
DNO is given by the square root
of the highest order tangential terms in the elliptic interior
operator.  The highest two orders of the DNO are calculated, as in
\cite{CNS92}, and reduce to those results in a special case.  The
approach of \cite{CNS92} could be used here as well to calculate
the DNO, but we take another approach 
outlined in \cite{Eh18_halfPlanes} 
based on pseudodifferential
operators on domains with boundary, an approach which  
was useful in 
calculating the symbol of the normal derivative to the Green's
operator, as well permitting similar 
calculations and estimates in the situation of
piecewise smooth domains \cite{Eh18_pwSmth}.
 Relations among operators comprising the DNO, as
well as other derived boundary value operators in the
 boundary conditions are essential in the 
construction of a solution to the $\mdbar$-problem
 if a solution operator to $\mdbar_b$ is assumed
in \cite{Eh18_dbarDbarb}.

 We further demonstrate in this paper the
  persistence of the non-elliptic character
of the \dbar-Neumann conditions.  In Section
 \ref{varSquare}, we examine what 
 happens when a perturbation is made of the 
  elliptic operator of the problem.  This change naturally
also leads to a different DNO, however 
 as we shall see the associated boundary condition
  is essentially the same (and non-elliptic!).
The boundary
operator can be approximated by
Kohn's Laplacian, $\square_b$.  This suggests 
that the \dbar-Neumann problem can
be solved by inverting the 
$\square_b$ operator and that 
the \dbar-problem can be solved by
using a solution operator for
$\mdbar_b$.  This approach to 
$\mdbar$ is taken up in 
\cite{Eh18_dbarDbarb}.

Most the work presented here was
 undertaken while the author was at the
University of Wuppertal and the hospitality of the University and
its Complex Analysis Working Group is sincerely appreciated.  The
author particularly thanks Jean Ruppenthal for his warm and
generous invitation to work with his group.  A visit to the
Oberwolfach Research Institute in 2013 as part of a Research in
Pairs group was also helpful in the formation of this article, for
which the author extends gratitude to the Institute as well as to
S\"{o}nmez \c{S}ahuto\u{g}lu for helpful discussions.

\section{Notation and background}
\label{notation}

We fix some
  notation used throughout the article.
 Our notation for derivatives is $\partial_t:=\frac{\partial}{\partial t}$.
  We also use the index notation for derivatives: with
  $\alpha=(\alpha_1,\ldots,\alpha_n)$ a multi-index
\begin{equation*}
 \partial^{\alpha}_x = \partial_{x_1}^{\alpha_1}\cdots
  \partial_{x_n}^{\alpha_n}.
\end{equation*}
 Multiplication of derivatives with
$-i$ come in handy when dealing with symbol
 expansions of pseudodifferential operators and
we will use the notation
$D_x^{\alpha}$ to denote 
 $-i \partial_x^{\alpha}$.

 We let $\Omega\subset\mathbb{R}^n$ be a smoothly bounded 
  domain and
  define pseudodifferential operators on $\Omega$ as in \cite{Tr}:
\begin{defn}
 We denote by
 $\mathcal{S}^{\alpha}(\Omega)$
the space of symbols
 $a(x,\xi)\in C^{\infty}(\Omega\times\mathbb{R}^n)$ which
 have the property that for any given compact set, $K$, and for
 any $n$- tuples $k_1$ and $k_2$, there is a
 constant $c_{k_1,k_2}(K)>0$ such that
\begin{equation*}
 \left| \partial_{\xi}^{k_1} \partial_{x}^{k_2} a(x,\xi) \right|
  \le c_{k_1,k_2}(K) \left( 1+|\xi|
   \right)^{\alpha-|k_1|}
    \qquad \forall x\in K,\ \xi\in\mathbb{R}^n.
\end{equation*}
\end{defn}
Associated to the symbols in class $\mathcal{S}^{\alpha}(\Omega)$
are the pseudodifferential operators, denoted by
$\Psi^{\alpha}(\Omega)$.
 If $u\in \mathscr{E}'(\Omega)$, we can
define $u\in \mathscr{E}'(\mathbb{R}^n)$ by 
 using an extension by 0, and then define
the Fourier Transform of the extended $u$.
We denote the transform of the extended 
 distribution simply by $\widehat{u}(\xi)$.
The definition of pseudodifferential operators
 on a domain $\Omega$ is given by
\begin{defn}
 \label{defnop}
 We say an operator $A: \mathscr{E}'(\Omega)\rightarrow
  \mathscr{D}'(\Omega)$ is in class
  $\Psi^{\alpha}(\Omega)$
  if $A$ can be written as an integral operator with symbol
  $a(x,\xi)\in \mathcal{S}^{\alpha}
   (\Omega)$:
\begin{equation}
 \label{defnom}
 Au (x) = \frac{1}{(2\pi)^n} \int_{\mathbb{R}^n} a(x,\xi)
 \widehat{u}(\xi) e^{ix\cdot \xi} d\xi.
\end{equation}
\end{defn}

In our applications in this article we
 will be dealing with operators defined on
all of $\mathbb{R}^{2n}$ applied to functions
 defined on $\Omega$ (but which can be 
  extended by 0 to the whole space).  The
operators on $\Omega$ will thus be the composition of
 the restriction to $\Omega$ operator with the
pseudodifferential operators defined on 
 $\mathbb{R}^{2n}$.

 If we let
$\chi_j$ be such that $\{\chi_j\equiv 1\}_j$ is a covering of
$\Omega$, and let $\varphi_j$ be a partition of unity subordinate
to this covering, then locally, we describe a boundary operator
$A: \mathscr{E}'(\Omega)\rightarrow
\mathscr{D}'(\Omega)$ in terms of its symbol,
$a(x,\xi)$ according to
\begin{equation*}
Au = \frac{1}{(2\pi)^n} \int a(x,\xi) \widehat{\chi_j
	u}(\xi)
d\xi
\end{equation*}
on $\mbox{supp }\varphi_j$.  Then we can describe the operator $A$
globally on all of $\Omega$ by
\begin{equation}
\label{alld}
Au = \frac{1}{(2\pi)^n} \sum_j \varphi_j \int a(x,\xi) \widehat{\chi_j
	u}(\xi)
d\xi.
\end{equation}
The difference arising between the definitions in \eqref{defnom}
and \eqref{alld} is a smoothing term \cite{Tr}, which we write as
$\Psi^{-\infty}u$, to use the notation of Definition
\ref{defnop}.

While $\Psi^{\alpha}(\Omega)$ will denote a class of
 operators, the use of $\Psi^{\alpha}$ will 
be used to refer to any operator in class
 $\Psi^{\alpha}(\Omega)$.
Furthermore, operators defined on the boundary of a domain
 will be denoted with a subscript $b$.  For instance,
if $A\in \Psi^{\alpha}(\partial\Omega)$ we write
 $A=\Psi_b^{\alpha}$.

In our use of Fourier transforms and equivalent symbols we use
cutoffs in order to make use of local
coordinates, one of
which being a defining function,
denoted by $\rho$, for the domain. 
We use $\widetilde{\ \ }$ to indicate transforms in tangential
directions.  Let $p\in\partial\Omega$ and let
$(x_1,\ldots,x_{n-1},\rho)$ be local coordinates around $p$,
$(\rho<0)$.  Let $\chi_p(x,\rho)$ denote a cutoff which is $\equiv
1$ near $p$ and vanishes outside a small neighborhood of $p$ on
which the local coordinates $(x,\rho)$ are valid.  Then with $u\in
L^2(\Omega)$ we write
\begin{align*}
 & \widehat{\chi_p u}(\xi,\eta)=
  \int \chi_p u(x,\rho) e^{-ix\xi} e^{-i\rho\eta} dx d\rho\\
 &\widetilde{\chi_p u}(\xi,\rho)=
  \int \chi_p u(x,\rho) e^{-ix\xi}  dx.
\end{align*}
We also use the $\widetilde{\ \ }$ notation when describing
transforms of functions supported on the boundary.  With notation
and coordinates as above, we let $u_b(x)\in L^2(\partial\Omega)$
and write
\begin{equation*}
\widetilde{\chi_p(x,0) u_b}(\xi)=
  \int \chi_p(x,0) u_b(x) e^{-ix\xi}  dx.
\end{equation*}

We want to apply
pseudodifferential operator techniques to 
vector fields on a smoothly bounded domain
  $\Omega\subset \mathbb{C}^n$.  Let 
$\rho$ be a smooth defining function for
 $\Omega$ ($\Omega =
  \{z\in C^n: \rho(z) <0\}$), normalized so that
$|\nabla \rho| =1$ on $\partial\Omega$.
  We choose an 
orthonormal basis of $(1,0)$ forms,
$\omega_1,\ldots, \omega_n$ in which 
$\omega_n=\sqrt{2}\partial\rho$,
 and we denote $L_1,\ldots, L_n$ the 
vector fields respectively dual 
 to the $\omega_j$.

We let $T = \frac{1}{2i} (L_n - \Lb_n)$, and
 $T^0 = T|_{\partial\Omega}$.
If we choose a boundary point $p$
 we can choose local coordinates, as above, in a 
neighborhood of $p$ such that $L_n$ has the
 form
\begin{align*}
\nonumber
L_n =& \frac{1}{\sqrt{2}}
\frac{\partial}{\partial \rho} + i T\\
\nonumber
=&\frac{1}{\sqrt{2}}
\frac{\partial}{\partial \rho} +
i T^0 + O(\rho)\\
=& \frac{1}{\sqrt{2}}
\frac{\partial}{\partial \rho} +
i\frac{\partial}{\partial x_{2n-1}} + O(\rho).
\end{align*}

Similarly, in a neighborhood of $p$, we can
 represent the $L_j$ vector fields as
\begin{equation}
\label{Ljp}
L_j= \frac{1}{2}\left(
\frac{\partial}{\partial x_{2j-1}}
-i \frac{\partial}{\partial x_{2j}}
\right) + \sum_{k=1}^{2n-1}\ell^j_k(x-p)
\frac{\partial}{\partial x_{k}}
+O(\rho),
\end{equation}
where $\ell^j_k(x)=O(x)$.  
 
In Fourier space we use $\xi$ to denote the
 dual variables to the $x$ coordinates,
$\xi_i$ corresponding to $x_i$ for 
 $i=1,\ldots, 2n-1$, and $\eta$ dual to 
$\rho$.  To help 
 distinguish the complex tangential
behavior, we set
 $\xi_L^2$ to be given by
\begin{equation*}
\xi_L^2=\sum_1^{2n-2} \xi_j^2.
\end{equation*}
We use the standard decomposition of the Fourier transform space
to separate three microlocal neighborhoods (see
for instance
\cite{Ch91, KoNi06, K85, Ni06}).  We let
$\psi^{+}$, $\psi^{0}$, and $\psi^-$ be a smooth partition of
unity on the unit ball, $|\xi|=1$.  We choose the functions so
that $\psi^+$ has support in $\xi_{2n-1}>\frac{1}{2}|\xi_L|$
and $\psi^+\equiv 1$ in the
region $\xi_{2n-1}>\frac{3}{4}|\xi_L|$.  The function
$\psi^-$ is defined symmetrically: $\psi^-$
has support in $\xi_{2n-1}<-\frac{1}{2}|\xi_L|$
and $\psi^-\equiv 1$ in the
region $\xi_{2n-1}<-\frac{3}{4}|\xi_L|$.  The function
$\psi^0$ has support in 
$|\xi_{2n-1}|< \frac{3}{4}|\xi_L|$ and satisfies
$\psi^0=1-\psi^+ -\psi^-$.  We extend the functions radially, so that, in particular, they satisfy 
\begin{equation*}
|\partial_{\xi}^k \psi^{\ast} |
\lesssim |\xi|^{-k}
\end{equation*} 
outside of some compact neighborhood of $\xi=0$.
This last property ensures that
$\psi^{+}$, $\psi^{0}$, and $\psi^-$ are in the class of symbols, 
$\lrs^0(\mathbb{R}^{2n-1})$. 
Cutoffs are also introduced so that (using the same notation
for the functions) $\psi^0\equiv 1$ on a 
neighborhood of $\xi=0$ contained in $|\xi|<1$.
The radial extensions from the unit circle together with
the support of $\psi^0$ near 0 
are then to
form a partition of unity of the transform space,
i.e., $\psi^+ +\psi^0 +\psi^-=1$ for all $\xi\in \mathbb{R}^{2n-1}$.
The operators
corresponding to the symbols,
$\psi^+$, $\psi^0$, and $\psi^-$,
will be denoted by
$\Psi^{\nu^+}$, $\Psi^{\nu^0}$, and $\Psi^{\nu^-}$, respectively.

As mentioned above, we take an approach to
 calculating boundary value operators based on
a pseudodifferential calculus for domains 
 with boundary worked out in \cite{Eh18_halfPlanes}.  
In particular, we will make use of the results
 detailing the behavior of functions which
result from the application of pseudodifferential
 operators (in $\mathbb{R}^{2n}$) to distributions
supported on the boundary of the domain, as well
 as certain operators applied to distributions
with support in the whole domain 
 (which can be thought of as a distribution
  on all of $\mathbb{R}^{2n}$ with an 
extension by 0).  Let us recall here a few results
 from \cite{Eh18_halfPlanes}.
The results in \cite{Eh18_halfPlanes} were stated for
 half-planes and these will be applied to 
domains $\Omega\subset \mathbb{C}^n \simeq \mathbb{R}^{2n}$,
 using local coordinates
$(x,\rho)$ with $\rho<0$ defining the domain.

We first define certain operators
 which appear in taking inverses to elliptic operators:
\begin{defn}
	\label{defnDecomp}
 Let $A\in\Psi^{-k}(\mathbb{R}^{2n})$ for $k\ge 1$
 have the property that 
  for any $N\in \mathbb{N}$, it
can be written in the form
\begin{equation*}
 A= B + \Psi^{-N},
\end{equation*}
where $B\in\Psi^{-k}(\mathbb{R}^{2n})$ has symbol,
$\sigma(B)(x,\rho,\xi,\eta)$, which
is meromorphic (in $\eta$) with poles at 
\begin{equation*}
\eta=q_1(x,\rho,\xi), \ldots, q_k(x,\rho,\xi)
\end{equation*}	
with $q_i(x,\rho,\xi)$ themselves, as well
 as the imaginary parts,
$\mbox{Im }q_i$,
symbols of 
pseudodifferential operators of
order 1 (restricted to $\eta=0$)
such that for each 
$\rho$, $\mbox{Res}_{\eta=q_i}
\sigma(B) \in \mathcal{S}^{k+1}
(\mathbb{R}^{2n-1})$ with symbol estimates
uniform in the $\rho$ parameter.

We call such an operator, $A$, a {\it decomposable} 
 operator.
\end{defn}

 The first theorem is taken from 
Theorems 2.2 and 2.4 of \cite{Eh18_halfPlanes}.
\begin{thrm}
\label{estinvell}
Let $g\in \mathscr{D}(\Omega)$ of
the form $g(x,\rho) =g_b(x) \delta(\rho)$ for
	$g_b\in W^s(\partial\Omega)$.  Let
$A\in \Psi^{k}(\Omega)$, $k\le -1$ be
a decomposable operator.
Then for all $s$,
\begin{equation*}
\|A g\|_{W^s(\Omega)}
  \lesssim \|
g_b\|_{W^{s+k+1/2}(\partial\Omega)}.
\end{equation*}
\end{thrm}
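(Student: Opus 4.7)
The plan is to turn the action of $A$ on the surface distribution $g = g_b(x)\delta(\rho)$ into a finite sum of Poisson-type operators acting on $g_b$, by contour integration in the normal Fourier variable $\eta$, and then to exploit the exponential decay of the resulting exponentials inside $\Omega$ to gain a half-derivative from the $\rho$-integration. By Definition~\ref{defnDecomp} we may write $A = B + \Psi^{-N}$ for $N$ as large as we please, with $\sigma(B)$ meromorphic in $\eta$. The smoothing remainder satisfies $\|\Psi^{-N}g\|_{W^s(\Omega)}\lesssim \|g_b\|_{W^{s-N+1/2}(\partial\Omega)}$, since $g_b\,\delta(\rho)\in W^{s-1/2}(\Omega)$ whenever $g_b\in W^s(\partial\Omega)$; this is strictly stronger than the claim as soon as $N> -k-1$, so it suffices to estimate $Bg$.

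Since $\widehat{\delta(\rho)}\equiv 1$, we have $\widehat{g}(\xi,\eta) = \widetilde{g_b}(\xi)$, and
\begin{equation*}
Bg(x,\rho) = \frac{1}{(2\pi)^{2n}}\int\!\!\int \sigma(B)(x,\rho,\xi,\eta)\,\widetilde{g_b}(\xi)\,e^{ix\cdot\xi+i\rho\eta}\,d\eta\,d\xi.
\end{equation*}
Because $\rho<0$, the factor $e^{i\rho\eta}$ decays as $\im\eta\to -\infty$, so the $\eta$-contour is closed in the lower half plane, picking up only those poles $q_j$ with $\im q_j<0$. The residue theorem yields (modulo a further smoothing error)
\begin{equation*}
Bg(x,\rho) = \sum_{\im q_j<0}\frac{-i}{(2\pi)^{2n-1}}\int r_j(x,\rho,\xi)\,e^{i\rho\,q_j(x,\rho,\xi)}\,\widetilde{g_b}(\xi)\,e^{ix\cdot\xi}\,d\xi,
\end{equation*}
where $r_j = \mathrm{Res}_{\eta=q_j}\sigma(B)\in\mathcal{S}^{k+1}(\mathbb{R}^{2n-1})$ uniformly in $\rho$.

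Each summand $P_jg_b$ is a Poisson-type operator with phase $q_j\in\mathcal{S}^1$ and amplitude $r_j\in\mathcal{S}^{k+1}$. Since $\im q_j$ is itself an order-one symbol, $|\im q_j(\xi)|\gtrsim\langle\xi\rangle$, and the central computation
\begin{equation*}
\int_{-\infty}^{0}\bigl|e^{i\rho q_j(\xi)}\bigr|^2\,d\rho \;=\; \int_{-\infty}^{0}e^{-2\rho|\im q_j(\xi)|}\,d\rho \;\lesssim\; \langle\xi\rangle^{-1}
\end{equation*}
delivers exactly the advertised half-derivative gain. Combined with $|r_j|\lesssim\langle\xi\rangle^{k+1}$ and Plancherel in $x$, this settles $s=0$. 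Each tangential derivative applied to $P_jg_b$ brings down a factor of $\xi$, and each normal derivative brings down $iq_j$ of order one (plus lower-order terms from differentiating $r_j$ and $q_j$); both kinds raise the amplitude order by one while leaving the exponential intact, so the same $\rho$-integration works after induction on integer $s$, with interpolation (and duality for $s<0$) handling the remaining values.

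The main obstacle is not the residue calculus but the uniform control of commutators when derivatives meet the $x$- and $\rho$-dependent phase $q_j$ and amplitude $r_j$: one must verify that every product-rule contribution stays in the claimed symbol class. This is exactly where the Definition~\ref{defnDecomp} hypothesis that \emph{both} $q_j$ and $\im q_j$ are bona fide order-one symbols is indispensable, and where the partition of unity from \eqref{alld} reduces the global estimate to a single boundary coordinate patch in which the expansions of Section~\ref{notation} are valid.
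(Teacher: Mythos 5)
Your overall strategy---close the $\eta$-contour in the lower half plane, convert $Bg$ into a finite sum of Poisson-type operators $\int r_j\,e^{i\rho q_j}\widetilde{g}_b\,e^{ix\xi}d\xi$, and extract the half-derivative gain from the $\rho$-integration against $e^{-2|\rho|\,|\im q_j|}$---is the natural one, and it matches the residue computations this paper actually performs later (e.g.\ in the proof of Theorem \ref{poiss} and the identification of the symbol \eqref{symlam}); note, however, that the paper itself does not reprove Theorem \ref{estinvell} but imports it from Theorems 2.2 and 2.4 of \cite{Eh18_halfPlanes}, so the comparison can only be against that intended argument.

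There is a genuine gap at the crux of your estimate. You claim $|\im q_j(x,\rho,\xi)|\gtrsim\langle\xi\rangle$ ``since $\im q_j$ is itself an order-one symbol.'' Membership in $\mathcal{S}^1$ gives only the \emph{upper} bound $|\im q_j|\lesssim\langle\xi\rangle$; it says nothing from below, and Definition \ref{defnDecomp} as stated imposes no ellipticity on $\im q_j$. If, say, $\im q_j$ were bounded, your $\rho$-integral would only be $O(1)$ and the argument would produce $\|g_b\|_{W^{s+k+1}}$ on the right, not $\|g_b\|_{W^{s+k+1/2}}$, so the lower bound is exactly what powers the stated gain and must either be assumed (it holds in the relevant examples, where $q_j=\pm i|\Xi(x,\xi)|$ modulo lower order) or argued; moreover even then $|\Xi(x,\xi)|$ vanishes at $\xi=0$, so the region $|\xi|\lesssim 1$ has to be treated separately (there the $\rho$-integral is finite only because $\Omega$ is bounded, and that region contributes a smoothing term). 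Two lesser points: the ``Plancherel in $x$'' step is not literal Plancherel because $r_j$ and $q_j$ depend on $(x,\rho)$, so the $L^2$-boundedness of these $x$-dependent oscillatory integrals (and of all the product-rule terms you defer to the end) needs the pseudodifferential machinery, not just Fourier inversion; and $g_b\in W^s(\partial\Omega)$ gives $g_b\,\delta(\rho)\in W^{t}$ only for $t<s-1/2$, not $t=s-1/2$, which is harmless for the $\Psi^{-N}$ remainder but should be stated correctly.
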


Theorem \ref{estinvell} for instance is 
 applicable for any term arising in the symbol
expansion of 
 the inverse to an elliptic differential
operator.  With $A$ an elliptic differential
 operator of order $k$, we can write
for any $N\in \mathbb{N}$,
\begin{equation}
 \label{formInv}
  A^{-1} = B_{-k} + \Psi^{-N}
\end{equation}
where $B_{-k}$ is a pseudodifferential operator
 of order $-k$ and satisfies the conditions of
  the $B$ operator in 
Definition \ref{defnDecomp}.  

Thus for instance if 
 $\triangle$ is a second order elliptic differential
 operator on $\mathbb{R}^{2n}$, and 
 for some given $s\ge0$,
$g_b \in W^s(\partial\Omega)$, 
 with $g = g_b \times \delta(\rho)$ as above.
   Then 
$\triangle^{-1}$ satisfies the hypothesis of 
 Theorem \ref{estinvell} and we have
\begin{equation*}
\|\triangle^{-1} g\|_{W^{s+3/2}(\Omega)}
\lesssim \|
g_b\|_{W^{s}(\partial\Omega)}.
\end{equation*}

We also have the following useful Lemmas.
\begin{lemma}
\label{liglem}
Let $g\in \mathscr{D}(\Omega)$ be of the
 form $g(x,\rho) =g_b(x) \delta(\rho)$ for
 $g_b\in \mathscr{D}(\partial \Omega)$ .  Let
$A\in \Psi^k(\Omega)$, be a
	pseudodifferential operator of order $k$.
	Let $\rho$ denote the operator of
	multiplication with $\rho$. Then
	$\rho\circ A$ induces a pseudodifferential 
	operator of order $k-1$
	on $g$:
\begin{equation*}
	\rho A g \equiv \Psi^{k-1} g.
\end{equation*}
\end{lemma}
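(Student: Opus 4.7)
The plan is to express $\rho\circ A g$ as an oscillatory integral in the local coordinates $(x,\rho)$ and absorb the factor $\rho$ into the symbol through an integration by parts in the dual variable $\eta$. Writing $A$ with symbol $a(x,\rho,\xi,\eta)\in\mathcal{S}^k$, and using that $g(x,\rho)=g_b(x)\delta(\rho)$ gives $\widehat{g}(\xi,\eta)=\widetilde{g_b}(\xi)$, which is independent of $\eta$, I have
\[
 Ag(x,\rho) = \frac{1}{(2\pi)^{2n}} \int a(x,\rho,\xi,\eta)\,\widetilde{g_b}(\xi)\,e^{i(x\cdot\xi+\rho\eta)}\,d\xi\,d\eta.
\]

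The key observation is the identity $\rho\, e^{i\rho\eta}=-i\,\partial_\eta e^{i\rho\eta}$. Multiplying the integrand by $\rho$ and integrating by parts once in $\eta$, with the derivative landing only on $a$ (since $\widetilde{g_b}(\xi)$ is $\eta$-independent), produces
\[
 \rho\, Ag(x,\rho) = \frac{i}{(2\pi)^{2n}} \int (\partial_\eta a)(x,\rho,\xi,\eta)\,\widetilde{g_b}(\xi)\,e^{i(x\cdot\xi+\rho\eta)}\,d\xi\,d\eta.
\]
By the symbol estimates of Definition~\ref{defnop}, $a\in\mathcal{S}^k$ implies $\partial_\eta a\in\mathcal{S}^{k-1}$, so the right-hand side is precisely the action on $g$ of a pseudodifferential operator of order $k-1$ with symbol $i\,\partial_\eta a$. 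Assembling local pieces via the partition-of-unity formula \eqref{alld} contributes only $\Psi^{-\infty}$ commutators with the cutoffs $\varphi_j,\chi_j$, which are absorbed into the $\equiv$ equivalence.

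The one technical point is justifying the integration by parts when $k\ge 0$, where $a$ does not decay in $\eta$. This is handled by the standard oscillatory-integral regularization: insert a cutoff $\chi(\eta/R)$, perform the integration by parts, and pass to the limit $R\to\infty$. Since $g_b\in\mathscr{D}(\partial\Omega)$, $\widetilde{g_b}(\xi)$ is Schwartz in $\xi$, so the $\xi$-integration is absolutely convergent, and the contribution from $|\eta|=R$ vanishes as $R\to\infty$ when paired against the resulting distribution in $\rho$ (the $\delta$-structure of $g$ is encoded in the $\eta$-independence of $\widetilde{g_b}$). This is the only place where care is needed; the rest of the argument is a direct symbol computation.
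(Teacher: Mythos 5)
Your argument is correct, and it is the standard proof of Lemma~\ref{liglem}: since $\widehat{g}(\xi,\eta)=\widetilde{g_b}(\xi)$ is independent of $\eta$, the identity $\rho\,e^{i\rho\eta}=-i\,\partial_\eta e^{i\rho\eta}$ and one integration by parts in $\eta$ trade the factor $\rho$ for the symbol $i\,\partial_\eta a\in\mathcal{S}^{k-1}$, which is exactly the gain of one order claimed. Note that this paper does not actually prove the lemma; it is recalled from \cite{Eh18_halfPlanes}, so there is no internal proof to compare against, but your route is the one the statement is designed for (the hypothesis that $g$ is a boundary layer $g_b\times\delta(\rho)$ is used precisely so that no $\eta$-derivative falls on $\widehat{g}$). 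The only loose phrase is the treatment of the regularization: the cleanest justification is to interpret the $\eta$-integral as an oscillatory integral, i.e.\ pair with a test function $\varphi(x,\rho)$, carry out the $\rho$-integration first to produce rapid decay in $\eta$, and then observe that the commutator term $R^{-1}\chi'(\eta/R)\,a$ from the cutoff $\chi(\eta/R)$ tends to $0$ in $\mathscr{D}'$; saying the boundary contribution vanishes ``when paired against the resulting distribution in $\rho$'' should be phrased as pairing against test functions, but this is a presentational point, not a gap.
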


Let $R$ denote the restriction operator, $R:
\mathscr{D}(\Omega) \rightarrow \mathscr{D}(\partial\Omega)$,
given in local coordinates
 $(x,\rho)$ by
$R\phi=\left.\phi\right|_{\rho=0}$.
\begin{lemma}
\label{restrict} 
Let $g\in \mathscr{D}(\Omega)$ be of the
form $g(x,\rho) =g_b(x) \delta(\rho)$ for
$g_b\in \mathscr{D}(\partial\Omega)$ .  Let
$A\in \Psi^k(\Omega)$, 
 be an operator of order $k$,
	for $k\le -2$.  
Then $R \circ A$ induces a pseudodifferential
	operator in 
$\Psi^{k+1}_b(\partial\Omega)$ 
	acting on $g_b$ via
\begin{equation*}
	R\circ A g \equiv \Psi^{k+1}_b g_b.
\end{equation*}
\end{lemma}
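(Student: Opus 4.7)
The plan is to compute $R\circ A$ applied to $g = g_b(x)\delta(\rho)$ directly from the integral representation of $A$ and then integrate out the dual variable $\eta$ to the normal direction. Since $g$ is tangential with a delta in the normal variable, its full Fourier transform factors as $\widehat{g}(\xi,\eta) = \widetilde{g_b}(\xi)$, independent of $\eta$. Substituting into the definition of $A$ via the partition of unity as in \eqref{alld} yields, on $\{\rho=0\}$,
\begin{equation*}
R\circ A g(x) = \frac{1}{(2\pi)^{2n}}\sum_j \varphi_j(x,0)\int a(x,0,\xi,\eta)\,\widetilde{\chi_j g_b}(\xi)\, e^{ix\cdot\xi}\,d\xi\,d\eta,
\end{equation*}
modulo a smoothing term absorbed by the cutoff machinery. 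Because the exponential in $\eta$ has been killed by setting $\rho=0$, I may perform the $\eta$-integration first, producing a purely tangential symbol
\begin{equation*}
b(x,\xi) := \frac{1}{2\pi}\int_{\mathbb{R}} a(x,0,\xi,\eta)\,d\eta,
\end{equation*}
after which $R\circ Ag$ is expressed as $b(x,D_x)$ applied to $g_b$, i.e.\ an element of $\Psi^{\alpha}_b(\partial\Omega)$ for the appropriate $\alpha$.

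The main task is then to verify that $b\in\mathcal{S}^{k+1}(\partial\Omega)$. Starting from the symbol estimate $|\partial_\xi^{k_1}\partial_x^{k_2} a(x,0,\xi,\eta)|\lesssim (1+|\xi|+|\eta|)^{k-|k_1|}$, the change of variables $\eta = (1+|\xi|)\tau$ in the one-dimensional $\eta$-integral gives
\begin{equation*}
\int_{\mathbb{R}}(1+|\xi|+|\eta|)^{k-|k_1|}\,d\eta \lesssim (1+|\xi|)^{k+1-|k_1|},
\end{equation*}
which is exactly the bound required to place $b$ in $\mathcal{S}^{k+1}(\partial\Omega)$. Differentiation in $x$ or $\xi$ passes under the integral sign without affecting convergence, so the full family of symbol estimates follows by the same one-line calculation.

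The main (and really the only) point of care is the integrability threshold. The ambient dimension is $2n$ but only one variable, $\eta$, is being integrated out, so the gain of a single order $k \to k+1$ precisely matches the single integration. The hypothesis $k\le -2$ is exactly what guarantees absolute convergence of the $\eta$-integral; at $k=-1$ the integrand would behave like $(1+|\xi|+|\eta|)^{-1}$ and the $\eta$-integral would diverge logarithmically, which is why the statement cannot be improved without additional structural assumptions on $a$ (such as the meromorphic decomposability of Definition \ref{defnDecomp}, where one uses contour integration instead). Finally, the partition-of-unity and cutoff formalism of Section \ref{notation} collects the off-diagonal remainders into $\Psi^{-\infty}$ terms, giving the desired equivalence $R\circ A g \equiv \Psi_b^{k+1} g_b$ globally on $\partial\Omega$.
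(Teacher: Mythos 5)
Your proof is correct and is essentially the intended argument: the paper states Lemma \ref{restrict} without proof (it is recalled from \cite{Eh18_halfPlanes}), and the justification is exactly your computation — since $\widehat{g}(\xi,\eta)=\widetilde{g_b}(\xi)$ is independent of $\eta$, restricting to $\rho=0$ and integrating out the single normal frequency converts $a(x,0,\xi,\eta)$ into a tangential symbol $b(x,\xi)$ satisfying the $\mathcal{S}^{k+1}(\partial\Omega)$ estimates, the hypothesis $k\le -2$ ensuring absolute convergence of the $\eta$-integral (and the restriction to $\rho=0$ is legitimate because, by the paper's convention, the operators in question come from symbols defined on all of $\mathbb{R}^{2n}$, so the estimates hold up to the boundary). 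Your closing observation is also on point: at order $-1$ the $\eta$-integral fails to converge, and that is precisely where the paper instead invokes the decomposability/residue structure of Definition \ref{defnDecomp}, as in the contour computations in the proof of Theorem \ref{poiss}.
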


\section{\dbar-Neumann problem}
\label{dbarN}

We look more closely at the \dbar-Neumann problem,
$\square u = f$, where
\begin{equation*}
\square =\mdbar\mdbar^{\ast} + \mdbar^{\ast}\mdbar
.
\end{equation*}
For $f$ a $(0,q)$-form, the equation
$\square u = f$ comprises a system of equations,
and we write our equations in matrix form.  
We use the convention of writing indices with
increasing entries: a particular index of
length $q$,
$J=(j_1,\ldots, j_q)$, is ordered according to
$j_l<j_m$ for $l<m$.
For the matrix
we consider the ordering of two indices,
$J_1 = (j_{11},j_{12},\ldots, j_{1q})$ and
$J_2 = (j_{21},j_{22},\ldots, j_{2q})$,
according to
$J_1< J_2$ if $j_{1k}< j_{2k}$ for the first 
 $k$ such that $j_{1k} \neq j_{2k}$, 
 and $J_1= J_2$ if $j_{1k}= j_{2k}$ for
all $k=1,\ldots q$.  The rows (and columns) of the matrix
are 
in order of increasing indices.  Thus, for instance,
if we denote 
$J_1=(1,2,\ldots, q)$,
the $(1,1)$-entry of the matrix corresponds to the action
on $u_{J_1}$ which results in a form whose component is
$\omegab_{J_1}$.  Similarly, with 
$J_2=(1,2,\ldots,q-1,n)$, the $(n-q+1,1)$-entry of the matrix
corresponds to the action
on $u_{J_1}$ which results in a form whose component is
$\omegab_{J_2}$, etc.

We want to calculate in general a $J^{th}$ row of the matrix
of operators describing $\square$.
  We thus need to know which forms would result
 in a $\omegab_J$ term
when some input form is given into
 $\square$.  
Let $J=(j_1, \ldots, j_q)$ with
$j_m=k$, where $1\le k\le n$, for some $m$.
We use the notation $J_{\hat{k}}$ to denote
the index of length $q-1$
$(j_1,\ldots,j_{m-1},j_{m+1},\ldots,j_q)$.  We further
use the set notation $J_{\hat{k}}\cup\{l\}$ to denote 
the index of length $q$ 
 (we assume the case $l\neq j_i$ for
  $1\le i\le q$, $i\neq m$) consisting of
the set $\{
j_1,\ldots,j_{m-1},j_{m+1},\ldots,j_q,l
\}$ in the appropriate order (recall 
a particular index
$K=(k_1,\ldots, k_q)$ is ordered if
$k_r<k_s$ for $r<s$).

Since $\mdbar$ increases the type of the form by 1 and
$\mdbar^{\ast}$ decreases it by one, we see
it is when $\square$ operates on a form of
the type $u_{J_{\hat{k}}\cup\{l\}}
\omegab_{J_{\hat{k}}\cup\{l\}}$ that a
$\omegab_J$ term would result.  And so we calculate    $\square u_{J_{\hat{k}}\cup\{l\}}
\omegab_{J_{\hat{k}}\cup\{l\}}$.
With abuse of notation, for 
 a prescribed $J$, we write
$u_{kl}$ in place of
$u_{J_{\hat{k}}\cup\{l\}}$, 
assuming $k\neq l$, with the obvious
 simplification in dimension 2.

We use our notation in the case $k\neq l$, 
 although the case $k=l$ is also included in 
the same calculations.
We start with the $\omegab_{J_{\hat{k}}}$ components 
resulting from 
$\mdbar^{\ast} \left( u_{kl}\omegab_{J_{\hat{k}}\cup\{l\}}
\right)$. 
We use the notation
\begin{equation}
\label{cCoeff}
c_{J\cup \{m\}}^J= \mdbar(\omegab_J) \rfloor
\omegab_{J\cup \{m\}},
\end{equation}
and for $j=1,\ldots,n$,
we define $d_j$ according to
an integration by parts
\begin{equation*}
(\phi,\Lb_j\varphi)= 
\Big( (-L_j + d_j)\phi,\varphi\Big)
\end{equation*}
for $\phi, \varphi \in C^{\infty}_0(\Omega)$ 
with support in a coordinate patch such that
so that $L_j$ can be written in terms of local coordinates 
as in \eqref{Ljp}.

From
\begin{align*}
\nonumber
\left(
\mdbar^{\ast} u_{kl}\omegab_{J_{\hat{k}}\cup\{l\}},
\varphi \omegab_{J_{\hat{k}}}
\right) =&
\left(
u_{kl}\omegab_{J_{\hat{k}}\cup\{l\}},
\mdbar( \varphi \omegab_{J_{\hat{k}}})
\right)\\
\nonumber
=& \left(
u_{kl}\omegab_{J_{\hat{k}}\cup\{l\}},
\Lb_l \varphi \omegab_l\wedge \omegab_{J_{\hat{k}}} + c^{J_{\hat{k}}}_{J_{\hat{k}}\cup\{l\}}\varphi
\omegab_{J_{\hat{k}}\cup\{l\}} \right)\\
=& \left(
u_{kl},
\varepsilon^{lJ_{\hat{k}} }_{J_{\hat{k}}\cup \{l\}}
\Lb_l \varphi  + c^{J_{\hat{k}}}_{J_{\hat{k}}\cup\{l\}}\varphi
\right),
\end{align*}
where we write,
for some index $K=(k_1,\ldots k_{q-1})$ and
$mK = (m,k_1,\ldots k_{q-1})$, 
\begin{equation*}
\varepsilon^{mK}_{K\cup \{m\}}
= \omegab_{mK}  \rfloor
\omegab_{K\cup \{m\}},
\end{equation*}
we have
\begin{equation}
\label{dstarJk}
\mdbar^{\ast} \left( u_{kl}\omegab_{J_{\hat{k}}\cup\{l\}}
\right)
= \left( \varepsilon^{lJ_{\hat{k}} }_{J_{\hat{k}}\cup \{l\}} \left(
-L_l + d_l\right)u_{kl}
+ \overline{c}^{J_{\hat{k}}}_{J_{\hat{k}}\cup\{l\}}u_{kl}\right) \omegab_{J_{\hat{k}}}
+ \cdots,
\end{equation}
with $\varphi$ some test function,
where the $\cdots$ refers to terms whose contraction with
$\omegab_{J_{\hat{k}}}$ results in 0
(and which contain
a $\omegab_l$ component).
And thus
\begin{align}
\nonumber
\mdbar \mdbar^{\ast} u_{kl}\omegab_{J_{\hat{k}}\cup\{l\}}
=& \left( \varepsilon^{lJ_{\hat{k}} }_{J_{\hat{k}}\cup \{l\}}
\Lb_k \left(
-L_l + d_l\right)u_{kl}
+ \overline{c}^{J_{\hat{k}}}_{J_{\hat{k}}\cup\{l\}}\Lb_k u_{kl}\right) \omegab_k\wedge \omegab_{J_{\hat{k}}}\\
\nonumber
& -  \varepsilon^{lJ_{\hat{k}} }_{J_{\hat{k}}\cup \{l\}}
c^{J_{\hat{k}}}_{J}
L_l u_{kl}\omegab_J
+ \cdots\\
\nonumber
=& \Big(- \varepsilon^{lJ_{\hat{k}} }_{J_{\hat{k}}\cup \{l\}} \varepsilon^{kJ_{\hat{k}} }_{J} \Lb_k L_l
u_{kl}
+ \big(\varepsilon^{lJ_{\hat{k}} }_{J_{\hat{k}}\cup \{l\}} \varepsilon^{kJ_{\hat{k}} }_{J}d_l
+ \varepsilon^{kJ_{\hat{k}} }_{J}
\overline{c}^{J_{\hat{k}}}_{J_{\hat{k}}\cup\{l\}}\big) \Lb_k u_{kl}\\
\label{ddstar}
&- \varepsilon^{lJ_{\hat{k}} }_{J_{\hat{k}}\cup \{l\}}
c^{J_{\hat{k}}}_{J}
L_l u_{kl} \Big)\omegab_J + \cdots,
\end{align}
where here the $\cdots$ refers to terms which
upon contraction with $\omegab_J$ result in 0
as well as zero order terms.

We note that the calculations also show
\begin{equation}
\label{ddiag}
\mdbar \mdbar^{\ast} u_{J}\omegab_{J}
= \sum_{l\in J} \Big(- \Lb_l L_l
u_{J}
+ \big(d_l
+ \varepsilon^{lJ_{\hat{l}} }_{J}
\overline{c}^{J_{\hat{l}}}_{J}\big) \Lb_l u_{J}- \varepsilon^{lJ_{\hat{l}}} _{J}
c^{J_{\hat{l}}}_{J}
L_l u_{J} \Big)\omegab_J + \cdots.
\end{equation}

Similarly, to calculate
$ \mdbar^{\ast}\mdbar u_{kl}\omegab_{J_{\hat{k}}\cup\{l\}}$ we start with
\begin{equation*}
\mdbar u_{kl}\omegab_{J_{\hat{k}}\cup\{l\}}
= \left( \varepsilon^{kJ_{\hat{k}}\cup \{l\}}_{J\cup \{l\}} \Lb_k u_{kl}
+ c^{J_{\hat{k}}\cup\{l\}}_{J\cup\{l\}}
u_{kl}  \right)\omegab_{J\cup\{l\}}
\end{equation*}
modulo terms whose contraction with
$\omegab_{J\cup\{l\}}$ result in 0.
As in \eqref{dstarJk}, we have
\begin{equation*}
\mdbar^{\ast} v \omegab_{J\cup\{l\}}
= \left( \varepsilon^{lJ}_{J\cup\{l\}}(-L_l+d_l )v
+ \overline{c}^{J}_{J\cup\{l\}}v\right) \omegab_J
\end{equation*}
modulo terms whose contraction with
$\omegab_{J}$ result in 0,
which when applied to $\mdbar u_{kl}\omegab_{J_{\hat{k}}\cup\{l\}}$ above, yields
\begin{align}
\nonumber
\mdbar^{\ast}\mdbar u_{kl}\omegab_{J_{\hat{k}}\cup\{l\}}=&   \Big( -\varepsilon^{lJ}_{J\cup\{l\}}
\varepsilon^{kJ_{\hat{k}}\cup \{l\}}_{J\cup \{l\}} L_l \Lb_k u_{kl}\\
\nonumber
&+ 
\varepsilon^{lJ}_{J\cup\{l\}}
\varepsilon^{kJ_{\hat{k}}\cup \{l\}}_{J\cup \{l\}} d_l  \Lb_k u_{kl}
+ \varepsilon^{kJ_{\hat{k}}\cup \{l\}}_{J\cup \{l\}}  \overline{c}^{J}_{J\cup\{l\}}
\Lb_k u_{kl}
\\
\label{dstard}
&- \varepsilon^{lJ}_{J\cup\{l\}}
c^{J_{\hat{k}}\cup\{l\}}_{J\cup\{l\}}
L_l u_{kl}\Big) \omegab_J + \cdots,
\end{align}
where the $\ldots$ refers to terms 
whose contraction with
$\omegab_{J}$ result in 0 as well as
terms of order 0.
And similarly,
\begin{align}
\nonumber
\mdbar^{\ast} \mdbar u_{J}\omegab_{J}
=& \sum_{l\notin J} \Big(-  L_l\Lb_l
u_{J}
+ \big(d_l
+ \varepsilon^{lJ }_{J\cup \{ l \}}
\overline{c}^{J}_{J\cup \{ l \}}\big) \Lb_l u_{J}\\
\label{dstardiag}
&- \varepsilon^{lJ} _{J\cup \{ l \}}
c^{J}_{J\cup \{ l \}}
L_l u_{J} \Big)\omegab_J + \cdots.
\end{align}

Adding \eqref{ddiag} and
\eqref{dstardiag} yields
$\square u_J \omegab_J$:
\begin{align*}
\square u_J \omegab_J&=
-\sum_{l\in J} \Lb_l L_l u_{J}\omegab_J
- \sum_{l\notin J}  L_l\Lb_l u_{J}\omegab_J
\\
+&
\begin{cases}
(-1)^{|J|} \left(
- \overline{c}_{J}^{J_{\hat{n}}} \Lb_n u_{J} \omegab_J
+ c_{J}^{J_{\hat{n}}} L_n \right) u_J \omegab_J
+  d_n
\Lb_n u_J \omegab_J + \cdots
&n\in J
\\
(-1)^{|J|} \left(
\overline{c}_{J\cup\{n\}}^J \Lb_n
-
c_{J\cup\{n\}}^J L_n \right) u_J \omegab_J
+  d_n
\Lb_n u_J \omegab_J + \cdots
&n\notin J.
\end{cases}
\end{align*}
We only collect the 
(complex) normal derivatives, as 
they are enough to determine the behavior of the
relevant boundary value operators in the 
direction of the field
$T$ (see the discussion at the end of 
Section \ref{SecZero}).  
The terms included in 
 the $\cdots$ 
thus refer to terms which are orthogonal to
$\omegab_J$, of zero order, or involve only vector fields
orthogonal to $L_n$ or $\Lb_n$.  

The two cases can be combined into
the single expression
\begin{align*}
\square u_J \omegab_J&=
-\sum_{l\in J} \Lb_l L_l u_{J}\omegab_J
- \sum_{l\notin J}  L_l\Lb_l u_{J}\omegab_J
\\
+& (-1)^{|J\cup \{ n\}|}  \left(
c^{J\setminus \{n\}}_{J\setminus \{n\} \cup \{ n\}} L_n
-\overline{c}^{J\setminus \{n\}}_{J\setminus \{n\} \cup \{ n\}} \Lb_n
\right) u_J \omegab_J
+  d_n
\Lb_n u_J \omegab_J
+\cdots .
\end{align*}

We now add \eqref{ddstar} and \eqref{dstard}
to obtain $\square u_{kl}\omegab_{J_{\hat{k}}\cup\{l\}}$.  To simplify the result we note
\begin{equation}
\label{simpleVarEpsilon}
\varepsilon^{lJ}_{J\cup\{l\}}
\varepsilon^{kJ_{\hat{k}}\cup \{l\}}_{J\cup \{l\}} =
- \varepsilon^{lJ_{\hat{k}} }_{J_{\hat{k}}\cup \{l\}} \varepsilon^{kJ_{\hat{k}} }_{J}
\end{equation}
for $k\ne l$,
and as we are only interested in
the complex normal derivatives,
in comparing
$\varepsilon^{lJ}_{J\cup\{l\}}
c^{J_{\hat{k}}\cup\{l\}}_{J\cup\{l\}} $ and $ \varepsilon^{lJ_{\hat{k}} }_{J_{\hat{k}}\cup \{l\}}
c^{J_{\hat{k}}}_{J} $
we look at the case $l=n$, for which we have
\begin{equation*}
\varepsilon^{nJ}_{J\cup\{n\}}
c^{J_{\hat{k}}\cup\{n\}}_{J\cup\{n\}}
= -  \varepsilon^{nJ_{\hat{k}} }_{J_{\hat{k}}\cup \{n\}}
c^{J_{\hat{k}}}_{J}.
\end{equation*}
Similarly, in comparing
$\varepsilon^{kJ_{\hat{k}}\cup \{l\}}_{J\cup \{l\}}  \overline{c}^{J}_{J\cup\{l\}}$ and
$ \varepsilon^{kJ_{\hat{k}} }_{J}
\overline{c}^{J_{\hat{k}}}_{J_{\hat{k}}\cup\{l\}}$ we are only interested in the case
$k=n$ for which
\begin{equation*}
\varepsilon^{nJ_{\hat{n}}\cup \{l\}}_{J\cup \{l\}}  \overline{c}^{J}_{J\cup\{l\}}
= -  \varepsilon^{nJ_{\hat{n}} }_{J}
\overline{c}^{J_{\hat{n}}}_{J_{\hat{n}}\cup\{l\}}.
\end{equation*}
We thus can write, by adding \eqref{ddstar}
and \eqref{dstard},
\begin{equation*}
\square u_{kl}\omegab_{J_{\hat{k}}\cup\{l\}}=
- \varepsilon^{lJ_{\hat{k}} }_{J_{\hat{k}}\cup \{l\}} \varepsilon^{kJ_{\hat{k}} }_{J}
[ \Lb_k, L_l]
u_{kl}\omegab_J 
\end{equation*}
for $k\ne l$, modulo terms 
 with the vector fields
$L_j$ or $\Lb_j$ for $j=1,\ldots n-1$,
zero order terms, or forms orthogonal to 
$\omegab_J$.

We collect our results in the following proposition:
\begin{prop}
	\label{squareOp}
	Modulo the vector fields
	$L_j$ or $\Lb_j$ 
acting on components of 
 $u$ for $j=1,\ldots n-1$,
	zero order terms, or forms orthogonal to 
	$\omegab_J$, we have
\begin{align*}
	& i)\ \square \left(
	u_{J}  \omegab_{J}
	\right) =
	-\sum_{l\in J} \Lb_l L_l u_{J}\omegab_J
	- \sum_{l\notin J}  L_l\Lb_l u_{J}\omegab_J\\
	&\qquad \qquad
	+ (-1)^{|J\cup \{ n\}|}  \left(
	c^{J\setminus \{n\}}_{J\setminus \{n\} \cup \{ n\}} L_n
	-\overline{c}^{J\setminus \{n\}}_{J\setminus \{n\} \cup \{ n\}} \Lb_n
	\right) u_J \omegab_J
	+  d_n
	\Lb_n u_J \omegab_J
	\\
	& ii)\ \square u_{J_{\hat{k}}l}
	\omegab_{J_{\hat{k}}\cup\{l\}}=
	- \varepsilon^{lJ_{\hat{k}} }_{J_{\hat{k}}\cup \{l\}} \varepsilon^{kJ_{\hat{k}} }_{J}
	[ \Lb_k, L_l]
	u_{kl}\omegab_J.
	\end{align*}
\end{prop}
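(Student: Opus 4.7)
The plan is to assemble both formulas directly from the four local expressions \eqref{ddiag}, \eqref{dstardiag}, \eqref{ddstar}, and \eqref{dstard}, which together furnish $\mdbar\mdbar^{\ast}$ and $\mdbar^{\ast}\mdbar$ applied to each relevant component. Since modulo the stated error I only need to track $L_n$, $\Lb_n$, and their second-order combinations with the remaining complex tangential fields, the task reduces to careful sign bookkeeping on top of what has already been computed.

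For part (ii) I would add \eqref{ddstar} and \eqref{dstard} and split the result into three groups. The principal symbols contribute $-\varepsilon^{lJ_{\hat{k}}}_{J_{\hat{k}}\cup\{l\}}\varepsilon^{kJ_{\hat{k}}}_{J}\Lb_k L_l u_{kl}$ from \eqref{ddstar} and $-\varepsilon^{lJ}_{J\cup\{l\}}\varepsilon^{kJ_{\hat{k}}\cup\{l\}}_{J\cup\{l\}}L_l\Lb_k u_{kl}$ from \eqref{dstard}; by \eqref{simpleVarEpsilon} their $\varepsilon$-coefficients are negatives, so they assemble into the desired $-\varepsilon^{lJ_{\hat{k}}}_{J_{\hat{k}}\cup\{l\}}\varepsilon^{kJ_{\hat{k}}}_{J}[\Lb_k,L_l]u_{kl}$. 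The $c$-coefficient first-order pieces involve $L_l$ and survive modulo the error only when $l=n$; there the identity $\varepsilon^{nJ}_{J\cup\{n\}}c^{J_{\hat{k}}\cup\{n\}}_{J\cup\{n\}}=-\varepsilon^{nJ_{\hat{k}}}_{J_{\hat{k}}\cup\{n\}}c^{J_{\hat{k}}}_J$ forces the two contributions to cancel. The $\overline{c}$-terms cancel analogously when $k=n$ by the companion identity preceding the proposition, while the $d_l\Lb_k$ pieces cancel via \eqref{simpleVarEpsilon}. Every remaining first-order term is either tangential in the complex sense or of order zero and is absorbed in the error.

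For part (i) I would add \eqref{ddiag} and \eqref{dstardiag}. The second-order pieces immediately give $-\sum_{l\in J}\Lb_l L_l u_J - \sum_{l\notin J}L_l\Lb_l u_J$. The complex normal first-order terms arise from a single summand in exactly one of the two sums: the $l=n$ term of the $l\in J$ sum when $n\in J$, producing the $c^{J_{\hat{n}}}_J L_n$, $\overline{c}^{J_{\hat{n}}}_J \Lb_n$, and $d_n\Lb_n$ contributions with prefactor $(-1)^{|J|}$; otherwise the $l=n$ term of the $l\notin J$ sum when $n\notin J$, producing the contributions with $c^{J}_{J\cup\{n\}}$, $\overline{c}^{J}_{J\cup\{n\}}$ with prefactor $(-1)^{|J|}$ and the same $d_n\Lb_n$ piece. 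The two cases fuse into the single expression in (i) by writing $J\setminus\{n\}=J_{\hat{n}}$ in the first case and $J\setminus\{n\}=J$ in the second, absorbing the parity shift into the prefactor $(-1)^{|J\cup\{n\}|}$, which equals $(-1)^{|J|}$ when $n\in J$ and $(-1)^{|J|+1}$ otherwise.

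The main obstacle is only bookkeeping: the four sign identities (\eqref{simpleVarEpsilon} together with the three comparisons for $c$, $\overline{c}$, and $d_l$ displayed before the proposition) must be applied consistently, and the merging of the $n\in J$ and $n\notin J$ cases into one $(-1)^{|J\cup\{n\}|}$-indexed formula requires tracking parity carefully. None of the individual checks is deep, but one must be methodical to avoid sign errors in the interaction between the $\varepsilon$-tensors and the structure coefficients $c_{\cdot}^{\cdot}$.
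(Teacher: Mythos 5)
Your proposal is correct and follows essentially the same route as the paper: the paper likewise obtains part (i) by adding \eqref{ddiag} and \eqref{dstardiag} and merging the $n\in J$ and $n\notin J$ cases into the single $(-1)^{|J\cup\{n\}|}$-prefactored expression, and part (ii) by adding \eqref{ddstar} and \eqref{dstard} and invoking \eqref{simpleVarEpsilon} together with the two displayed $c$- and $\overline{c}$-comparisons at $l=n$ and $k=n$ to reduce everything to the commutator term. The only point to watch in your write-up of (i) is that the $d_n\Lb_n u_J$ contribution does not carry the $(-1)^{|J|}$ prefactor (it enters through $d_l+\varepsilon\,\overline{c}$ with the $\varepsilon$ attached only to the $\overline{c}$ part), exactly as it stands outside the prefactored bracket in the final formula.
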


\section{The Dirichlet to Neumann operator}
\label{dnosec} 

The Dirichlet to Neumann operator (DNO) is the
boundary value operator giving the outward
 normal derivative of the
solution to a Dirichlet problem.  We look
 at the DNO
corresponding to the operator $2\square$.
 We study the solution, $v$, 
 which solves
\begin{align*}
2 \square v =& 0 \qquad \mbox{in }
   \Omega\\
v=& g_b \qquad \mbox{on }
   \partial \Omega,
\end{align*}
and we obtain an expression for
 $\frac{\partial v}{\partial \rho}$ 
  (modulo smooth terms) near a given point
 $p\in\partial\Omega$ in terms of $g_b$.

 If $\chi_p$ is a smooth cutoff
 function with support in a small neighborhood of $p$
  and $\chi_p'$ a smooth cutoff such that $\chi_p' \equiv 1$ on
   $\mbox{supp }\chi_p$, we have
\begin{align}
 \label{2bgl}
2\square (\chi_p v) =&   \Psi^1 (\chi_p' v) \qquad \mbox{on }
   \Omega\\
\chi_p v=& \chi_p g_b \qquad \mbox{on }
   \partial \Omega.
\nonumber
\end{align}
 The term $\Psi^1 (\chi_p' v)$ above arises due to derivatives
 falling on the cutoff function $\chi_p$.  The use of the cutoff
 function allows us to consider the equation locally, and is
 equivalent to using pseudodifferential operators with symbols
 defined in local coordinate patches, with one of the coordinates
 given by $\rho$.
We thus consider $v$ to be 
 supported in a neighborhood of a given
boundary point, $p\in\partial\Omega$.

 To study the operator $\square$ and the associated boundary
operators, we consider the equation $\square v =0$ for 
a $(0,q)$-form $v$.  
In a small neighborhood of a boundary point, which we take to 
be $0\in\partial\Omega$, we write the vector
fields, $L_j$ in local coordinates as 
in \eqref{Ljp}:
\begin{equation}
 \label{Lj}
L_j= \frac{1}{2}\left(
\frac{\partial}{\partial x_{2j-1}}
-i \frac{\partial}{\partial x_{2j}}
\right) + \sum_{k=1}^{2n-1}\ell^j_k(x)
\frac{\partial}{\partial x_{k}}
+O(\rho),
\end{equation}
where $\ell^j_k(x)=O(x)$.  
Also we recall from Section \ref{notation},
the representation of $L_n$:
\begin{equation}
L_n =
 \label{LnRepeat}
\frac{1}{\sqrt{2}}
\frac{\partial}{\partial \rho} +
i\frac{\partial}{\partial x_{2n-1}} + O(\rho).
\end{equation}

We use the
symbol notation
\begin{align*}
& \sigma(\partial_{\rho})
=i\eta\\
&  \sigma(\partial_{x_j})
= i\xi_{j} \qquad j=1,\ldots, 2n-1.
\end{align*}
The second order
terms of the
(diagional matrix) operator
in $\square$
 from Proposition \ref{squareOp} are given by
\begin{equation*}
 -\sum_{l\in J} \Lb_l L_l 
- \sum_{l\notin J}  L_l\Lb_l 
.
\end{equation*}
Expanding this operator using
 \eqref{Lj} and \eqref{LnRepeat}, we write
in local coordinates
\begin{equation*}
-\sum_{l\in J} \Lb_l L_l 
- \sum_{l\notin J}  L_l\Lb_l 
 = 
   - \frac{1}{2}\Bigg(
 \frac{\partial^2}{\partial\rho^2}+
 \frac{1}{2} \sum_j \frac{\partial ^2}{\partial x_j^2}
 +
 2 \frac{\partial ^2}{\partial x_{2n-1}^2}
 +  \sum_{j,k=1}^{2n-1} l_{jk}
 \frac{\partial ^2}
 {\partial x_j\partial x_k}
 \Bigg) + O(\rho)
 ,
\end{equation*}
 where $l_{jk}=O(x)$, and
 modulo first order terms.  We
  define the operator $\Gamma$ to be given by
 the terms without a $\rho$ factor on the right
  hand side:
\begin{equation*}
\Gamma :=  - \Bigg(
\frac{\partial^2}{\partial\rho^2}+
\frac{1}{2} \sum_j \frac{\partial ^2}{\partial x_j^2}
+
2 \frac{\partial ^2}{\partial x_{2n-1}^2}
+  \sum_{j,k=1}^{2n-1} l_{jk}
\frac{\partial ^2}
{\partial x_j\partial x_k}
\Bigg).
\end{equation*}

We let $L_{bj}$ for 
 $j=1,\ldots, n-1$ be defined by
\begin{equation*}
 \sigma(L_{bj}) = \left. 
  \sigma(L_{j}) \right|_{\rho=0}.
\end{equation*}
Then we have
\begin{align*}
\sigma_2(\square) =&
\frac{1}{2} \eta^2+\xi_{2n-1}^2
+ \sum_{j=1}^{n-1} \sigma(L_{bj})
\sigma(\Lb_{bj}) + O(\rho)O(\xi^2)\\
=&
\frac{1}{2} \eta^2+\xi_{2n-1}^2
+ \frac{1}{4}\sum_{j=1}^{2n-2} \xi_{j}^2
+O(x)O(\xi^2)+O(\rho)O(\xi^2)\\
=& \frac{1}{2} \sigma(\Gamma)+O(\rho)O(\xi^2)
.
\end{align*}
We use the Kohn-Nirenberg notation,
$\sigma_j$ to denote the 
part of a symbol
homogeneous of degree j in
$\xi$ and $\eta$ in its symbol 
expansion.

Let us now denote
\begin{align}
\nonumber
\Xi^2(x,\xi) =& 2\xi_{2n-1}^2
+ 2\sum_{j=1}^{n-1} \sigma(L_{bj})
\sigma(\Lb_{bj})\\
\label{xiLb}
=& 2\xi_{2n-1}^2
+ \frac{1}{2}\sum_{j=1}^{2n-2} \xi_{j}^2
+O(x)O(\xi^2)
\end{align}
so that we can write
\begin{equation*}
  \sigma(\Gamma) =
\eta^2 + \Xi^2(x,\xi).
\end{equation*}

We now collect the second order $O(\rho)$ terms 
from $2\square$ 
 in an operator, $\tau$, i.e.
\begin{equation*}
\sigma(\tau)\Big|_{\rho=0}
=2 \frac{\partial}{\partial\rho}
\sigma_2(\square) \Big|_{\rho=0},
\end{equation*}
and all tangential first order operators in the expression
of the operator $2\square$ as in Proposition 
\ref{squareOp} into a pseudodifferential operator, denoted
$A$.  We also denote by the operator $S$ the 
zero order operator which is multiplication by the 
(matrix) coefficient of $\frac{\partial}{\partial\rho}$
in the operator $2\square$.

With the notation 
$
v|_{\rho=0} = 
g_b(x) ,
$
the equation $2\square v=0$ can be written
locally as
\begin{equation}
\label{DirProbCorrOps}
\Gamma v
+ \sqrt{2}S \left(\frac{\partial v}{\partial\rho}\right)
+ A v+\rho\tau(v)
=0.
\end{equation}

The Dirichlet to Neumann operator
(DNO) is
defined here as the boundary 
operator producing the boundary values of the
outward normal derivative 
of the solution to 
the Poisson equation
$2\square v =0$, with boundary
 values $v=g_b$ on 
$\partial\Omega$.  In the equation
\eqref{DirProbCorrOps} above, the 
DNO can be found by solving for
$\partial_{\rho}v\big|_{\rho=0}$.

We rewrite \eqref{DirProbCorrOps} using Fourier Transforms,
 extending \eqref{DirProbCorrOps} to 
$\mathbb{R}^{2n}$ by 0.  Let $E$ denote the
 extension by 0.
The term $E\circ \Gamma v$ can be written
\begin{equation*}
E\circ \Gamma v = \Gamma\circ E  v -
\frac{1}{(2\pi)^{2n}}
\int\left( \partial_{\rho}\widetilde{v}
\Big|_{\rho=0}
+i\eta \widetilde{g}_b(\xi)
\right)
e^{i\rho\eta} e^{ix\cdot \xi}
d\xi d\eta.
\end{equation*}
For ease of notation, we will disregard
 the extension operator, $E$, and instead
use the subscript $int$ to signify an operator 
 is to be applied to the extension by 0 to 
$\mathbb{R}^{2n}$ of a distribution defined in 
 $\Omega$.  With this convention, we write
\begin{equation*}
\Gamma v = \Gamma_{int} v -
\frac{1}{(2\pi)^{2n}}
 \int\left( \partial_{\rho}\widetilde{v}
\Big|_{\rho=0}
+i\eta \widetilde{g}_b(\xi)
\right)
e^{i\rho\eta} e^{ix\cdot \xi}
d\xi d\eta,
\end{equation*}
 where $\Gamma$ on the left-hand side
  is to be understood
as an operator $\Gamma : \mathscr{D}'(\Omega)
 \rightarrow  \mathscr{D}'(\mathbb{R}^{2n})$
via (left-side) composition with $E$,  
 and 
where
 $\Gamma_{int}:\mathscr{D}'(\mathbb{R}^{2n}) 
 \rightarrow \mathscr{D}'(\mathbb{R}^{2n})$
has as symbol:
\begin{equation*}
\nonumber
\sigma\left(
\Gamma_{int}
\right) 
= \eta^2 + \Xi^2(x,\xi).
\end{equation*}

The term $S
\left(\frac{\partial v}{\partial\rho}\right)$ can be written
\begin{align*}
S
\left(\frac{\partial v}{\partial\rho}\right)
&=\frac{1}{(2\pi)^{2n}}
 \int s(x,\rho)
  i\eta \widehat{v}(\xi,\eta)
e^{i\rho\eta} e^{ix\cdot \xi}
d\xi d\eta + 
 \frac{1}{(2\pi)^{2n}}\int s(x,0) \widetilde{g}_b(\xi)
e^{ix\cdot \xi}    d\xi \\
&:= S_{int}v + S_b g_b,
\end{align*}
 where similarly the left-hand side is
understood to be composed on the left by
 $E$, and where $S_{int}:=S\circ E$.  We have 
 $\sigma(S_{int})
  = s(x,\rho) i \eta$, and
 $S_b \in \Psi^0(\partial\Omega)$ with
$\sigma(S_b) = s(x,0)$.
 From Proposition \ref{squareOp}, 
$s(x,\rho)$ is a diagonal matrix (of 
 smooth functions).

We now rewrite 
 \eqref{DirProbCorrOps}
  as
\begin{multline*}
\nonumber
\Gamma_{int}v = 
\frac{1}{(2\pi)^{2n}} 
 \int\left( \partial_{\rho}\widetilde{v}
\Big|_{\rho=0}
+i\eta \widetilde{g}_b(\xi)
\right)
e^{i\rho\eta} e^{ix\cdot \xi}
d\xi d\eta\\
- \sqrt{2} S_{int} v
- \sqrt{2}  S_{b} g_b
-A v
- \rho  \tau v,
\end{multline*}
where $v$ is understood to be extended by 0 to
 all of $\mathbb{R}^{2n}$.

  $\Gamma_{int}$ is an elliptic operator
on $\mathbb{R}^{2n}$ and so 
we can apply an
inverse to $\Gamma_{int}$:
\begin{align}
\nonumber
v=&
\frac{1}{(2\pi)^{2n}}
 \Gamma^{-1}_{int} \circ \int\left(
   \partial_{\rho}\widetilde{v}
\Big|_{\rho=0}
+i\eta \widetilde{g}_b(\xi)
\right) 
e^{i\rho\eta} e^{ix\cdot \xi}
 d\xi d\eta\\
&- \sqrt{2}\Gamma^{-1}_{int} \circ
S_{int} v
- \sqrt{2}\Gamma^{-1}_{int} \circ
S_{b} g_b
\label{vSoln}
 - \Gamma^{-1}_{int} \circ  A v
-\Gamma^{-1}_{int} \circ ( \rho
\tau v ) 
\end{align}
modulo smoothing terms.
The idea behind our calculations of the DNO is to
write $ \partial_{\rho}v
\Big|_{\rho=0} = \Lambda_b^1g_b + 
 \Lambda^0_b g_b +\cdots$, 
insert this expansion into 
the first integral on the right-hand side of
\eqref{vSoln},
set $\rho=0$ in \eqref{vSoln}, and
equate terms with the same order, or, equivalently,
of the same degree 
in $\Xi(x,\xi)$
(see \cite{CNS92} for another approach).

 We first prove a Proposition
  about the Poisson operator, giving the
 solution, $v$, above.
   In order to consolidate the various smoothing
 terms which arise, we write $R_b^{-\infty}$ to include the
 restriction to $\rho=0$ of any sum of smoothing operators in
 $\Psi^{-\infty}(\Omega)$ applied to $v$,
 or smoothing operators in $\Psi^{-\infty}_b(\partial\Omega)$ applied to
 the boundary values
 $g_b$ or 
 $ \partial_{\rho} v |_{\rho=0}$.
    We also write $R^{-\infty}$ to include
 any sum of smoothing operators in
 $\Psi^{-\infty}(\Omega)$ applied to $v$,
 smoothing operators in $\Psi^{-\infty}(\Omega)$ applied to
 $g_b\times\delta(\rho)$ or 
 $ \partial_{\rho}
   v|_{\rho=0}\times\delta(\rho)$,
 or decomposable operators in $\Psi^{-k}(\Omega)$ for $k \ge 1$
 applied to $R_b^{-\infty}$
 (such terms can thus be estimated in terms of smooth boundary terms, 
 see Theorem \ref{estinvell})
 as well as smoothing operators in $\Psi^{-\infty}(\Omega)$ applied to
 $R_b^{-\infty}$.  From the definitions
 we have $R\left( R^{-\infty} \right) =
 R_b^{-\infty}$.
 
Estimates for the Poisson operator corresponding
 to an elliptic operator were worked out 
in \cite{Eh18_halfPlanes}.  In those results, the
 highest order term of the DNO was also 
  calculated.  The calculations here follow those
in \cite{Eh18_halfPlanes} to find the Poisson operator
 corresponding to $\square$.  As the operator, $\square$,
  is slightly different than the operator considered in
the author's earlier work
  (namely in the first order terms), 
and as the Poisson operator will be 
 used to obtain the lower order terms of the DNO,
we go through the calculations in detail, 
 obtaining first an expression for the 
Poisson operator, and then calculating the 
 DNO.

 We define the Poisson operator corresponding to $\square$ as 
the
 solution operator,
 $P$, mapping $(0,q)$-forms on $\partial \Omega$
 to $(0,q)$-forms on $\Omega$, to
 \begin{equation}
 \label{PoissEqn}
\begin{aligned} &2\square\circ P = 0\\
&R\circ P = I.
\end{aligned}
 \end{equation}
 We assume the classical results guaranteeing existence
  and uniqueness of a solution.

 \begin{thrm}
 \label{poiss}
 	Let $g_b$ be a $(0,q)$-form on $\partial \Omega$; each
 	component of $g_b$ is a distribution supported on
 	$\partial\Omega$.  Let $g=g_b(x) \times \delta(\rho)$ in local coordinates.
 	Then
\begin{equation*}  
 	P g = \Psi^{-1} g + R^{-\infty}.
\end{equation*}
 \end{thrm}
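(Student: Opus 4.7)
The plan is to analyze the right-hand side of \eqref{vSoln} term by term and iterate to reduce everything to a leading $\Psi^{-1}g$ contribution plus an $R^{-\infty}$ error. First I decompose the inverse Fourier integral into its two pieces: the $i\eta\widetilde{g}_b$ piece equals $\Gamma_{int}^{-1}(\partial_\rho g)$, which, using that $\Gamma_{int}$ is elliptic of order $2$ so that $\Gamma_{int}^{-1}$ is a decomposable operator of order $-2$ modulo smoothing (cf.\ \eqref{formInv}), becomes a $\Psi^{-1}$ operator applied to $g$ by Theorem \ref{estinvell}. The $\partial_\rho\widetilde{v}|_{\rho=0}$ piece equals $\Gamma_{int}^{-1}(\partial_\rho v|_{\rho=0}\cdot\delta(\rho))$, the delicate contribution treated below. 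The remaining boundary term $\sqrt{2}\Gamma_{int}^{-1}S_bg_b$ is of order $-2$ acting on $g=g_b\delta(\rho)$, hence also absorbed into $\Psi^{-1}g$.

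Next I handle the three interior contributions $\sqrt{2}\Gamma_{int}^{-1}S_{int}v$, $\Gamma_{int}^{-1}Av$ and $\Gamma_{int}^{-1}(\rho\tau v)$. The first two are immediate: $S_{int}$ has symbol $s(x,\rho)i\eta$ and $A$ is tangential first order, so composing with the order $-2$ operator $\Gamma_{int}^{-1}$ leaves operators of order $-1$ acting on $v$. For the third, the factor $\rho$ lowers the effective order by one in the pseudodifferential calculus for domains with boundary of \cite{Eh18_halfPlanes} (cf.\ Lemma \ref{liglem}), so that $\Gamma_{int}^{-1}\circ\rho\tau$ is again of order $-1$. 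Iterating \eqref{vSoln} $N$ times by substituting the right-hand side back into every interior occurrence of $v$ produces $v=(\Psi^{-1}g\text{-terms})+\Psi^{-N}v+(\text{boundary-derivative terms})$; for $N$ large the $\Psi^{-N}v$ piece is absorbed into $R^{-\infty}$, and the accumulated $g$-terms are all $\Psi^{-k}g$ with $k\geq 1$ and therefore sit inside $\Psi^{-1}g$.

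The main obstacle is showing that the $\Gamma_{int}^{-1}(\partial_\rho v|_{\rho=0}\cdot\delta(\rho))$ contribution and its iterates end up either in $\Psi^{-1}g$ or in $R^{-\infty}$. Classical existence and regularity for the Dirichlet problem \eqref{PoissEqn} provide that $\partial_\rho v|_{\rho=0}$ depends on $g_b$ through a boundary pseudodifferential operator of order $1$, and composing this with $\Gamma_{int}^{-1}$ (order $-2$) yields a decomposable operator of order $-1$ acting on $g$. Under iteration, Lemma \ref{restrict} converts the successive compositions involving restriction to $\rho=0$ into boundary operators of steadily decreasing order, so after sufficiently many iterations these become smoothing on $\partial_\rho v|_{\rho=0}$ and fit into $R^{-\infty}$ by its definition. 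Collecting the leading pieces then gives $Pg=\Psi^{-1}g+R^{-\infty}$.
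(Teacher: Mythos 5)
There is a genuine gap, in two places. First, your treatment of the term $\Gamma^{-1}_{int}\circ(\rho\tau v)$ misapplies Lemma \ref{liglem}: that lemma lowers the order of $\rho\circ A$ only when it acts on a boundary-supported distribution of the form $g_b\times\delta(\rho)$, not on the interior solution $v$. As an operator on $v$, $\rho\tau$ is still of second order, so $\Gamma^{-1}_{int}\circ\rho\tau$ is of order $0$, and your iteration scheme gains nothing from this term --- substituting \eqref{vSoln} into itself $N$ times does not produce a $\Psi^{-N}v$ remainder, because the order-$0$ piece reproduces itself at every step. The paper avoids this by moving the $\rho\tau$ contribution to the left-hand side and inverting the operator with symbol $1-\rho\,\sigma(\tau)/(\eta^2+\Xi^2)$ by a parametrix for small $\rho$ (shrinking supports so the symbol is non-vanishing); only after that does the resulting $1+O(\rho)$ factor multiply the boundary-data integral, where Lemma \ref{liglem} legitimately applies and yields \eqref{vdvg}.

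Second, the step where you dispose of the $\Gamma^{-1}_{int}\bigl(\partial_{\rho}v|_{\rho=0}\times\delta(\rho)\bigr)$ term by asserting that ``classical existence and regularity for the Dirichlet problem'' make $\partial_{\rho}v|_{\rho=0}$ a first-order boundary pseudodifferential operator applied to $g_b$ is not justified as stated: existence and regularity give Sobolev estimates, not the pseudodifferential structure of the Dirichlet-to-Neumann map. In the paper this relation, \eqref{1der}, is precisely what the proof establishes internally: one restricts \eqref{vdvg} to $\rho=0$ by residue calculus, converts the restricted interior operators into boundary operators via Lemma \ref{restrict}, bootstraps with the auxiliary representation \eqref{subv} to control the $\Psi^1_b\circ R\circ\Psi^{-1}v$ term, and then solves for $\partial_{\rho}v|_{\rho=0}$; indeed Theorem \ref{thrmdno1} is recorded as a byproduct of this very argument, so importing it as an external black box makes your argument circular relative to what the theorem is meant to deliver. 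Even granting the DNO structure, you would still need to argue that the composition $\Gamma^{-1}_{int}\circ\bigl((\Psi^1_b g_b)\times\delta(\rho)\bigr)$ is of the form $\Psi^{-1}g$ with errors lying in the paper's specific classes $R^{-\infty}$ and $R_b^{-\infty}$, which only admit the unknown trace $\partial_{\rho}v|_{\rho=0}$ through smoothing operators; leaving that term at order $-2$ acting on the unknown trace places it in neither $\Psi^{-1}g$ nor $R^{-\infty}$.
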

 \begin{proof}
 	From \eqref{vSoln}, we have
 modulo $R^{-\infty}$
\begin{align}
 	\nonumber
 	v
 	=& \frac{1}{(2\pi)^{2n}}\int 
 	\frac{
 		\partial_{\rho} \widetilde{v} (\xi,0)
 		+ i\eta \widetilde{g}_b(\xi)
 	}{\eta^2+\Xi^2(x,\xi)}
 	e^{i x\xi}e^{i\rho\eta} d\xi d\eta\\
 	\nonumber
 	& - \sqrt{2}\Gamma^{-1}_{int} \circ S\left( \frac{\partial
 		v}{\partial\rho}
 	\right)
 	-  \Gamma^{-1}_{int} \circ A( v) -
 	\Gamma^{-1}_{int}
 	  \circ  \rho \tau ( v)\\
 	&  + \Psi^{-3} 
 \left(\left.\partial_{\rho} v\right|_{\rho=0}
 	\times \delta(\rho) \right)
 	+\Psi^{-2} g
 	+ \Psi^{-2}v,
 	\label{ftrepeat}
\end{align}
locally, in a small neighborhood of the origin; 
we recall, using the pseudodifferential 
 analysis, we consider $v$ to have compact 
support in a neighborhood of a boundary point,
 which we take to be the origin, and
  the pseudodifferential operators are also
 composed on the left with cutoffs with support
  in a neighborhood of the origin; see the
discussion in Section \ref{notation} as 
 well as the discussion following
\eqref{2bgl}.  For ease of notation, we omit
 the writing of the cutoffs. 
 We will also omit mention of the
smooth $R^{-\infty}$ terms, inserting them
 again at the end of the calculations.
 
 We note the terms
 	$\Gamma^{-1}_{int} 
  \circ S\left( \frac{\partial
 		v}{\partial\rho}
 	\right)$ and
 	$\Gamma^{-1}_{int} \circ  A( v)$
 	contribute terms
 	$\Psi^{-1}  v$ and $\Psi^{-2}  g$.
 	
 To handle the term
 \begin{equation}
 	\label{ligterm}
 	\Gamma^{-1}_{int}  \circ  \rho \tau ( v),
 \end{equation}
we write the operator $\tau$ using the
 form of its symbol
\begin{equation*}
 \sigma (\tau) = 
   \sum_{j,k=1}^{2n-1} \tau^{jk}(x,\rho)
    \xi_j\xi_k,
\end{equation*}
and	we rearrange \eqref{ftrepeat} as
\begin{align*}
 	\nonumber
 	v
 	=& \frac{1}{(2\pi)^{2n}}\int
  \frac{\partial_{\rho} \widetilde{v}
  (\xi,0) + i\eta \widetilde{ g}_b(\xi)}
 	{\eta^2+\Xi^2(x,\xi)}
 	e^{i x\xi}e^{i\rho\eta} d\xi d\eta\\
 	&+ \frac{1}{(2\pi)^{2n}}
\int \rho \frac{\sum_{j,k}
 		\tau^{jk}(x,\rho) \xi_j \xi_k}
 	{\eta^2 + \Xi^2(x,\xi)}
 	\widehat{ v}(\xi,\eta)e^{i x\xi}e^{i\rho\eta} d\xi d\eta
 	\\
 	&  + \Psi^{-3} \left(\left.
 \partial_{\rho} v\right|_{\rho=0}
 	\times \delta(\rho) \right)
 	+\Psi^{-2} g
 	+ \Psi^{-1}v,
\end{align*}
 	as the terms involving the operators $S$
 and $A$ are included in
 	the last two remainder terms.   We then bring the second term on
 	the right to the left-hand side:
\begin{align}
\nonumber
\frac{1}{(2\pi)^{2n}}\int
 	\Bigg(1-&\rho\frac{\sum_{j,k} \tau^{jk}(x,\rho) \xi_j \xi_k}
 	{\eta^2 + \Xi^2(x,\xi)}\Bigg) \widehat{v}(\xi,\eta)
 	e^{i x\xi}e^{i\rho\eta} d\xi d\eta=\\
 	\nonumber
 	&\frac{1}{(2\pi)^{2n}}\int
 	 \frac{
 		\partial_{\rho} \widetilde{v} (\xi,0)
 		+ i\eta \widetilde{g}_b(\xi)
 	}{\eta^2+\Xi^2(x,\xi)}
 	e^{i x\xi}e^{i\rho\eta} d\xi d\eta\\
 	\label{tauleft}
 	&
 	+ \Psi^{-3} \left(\left.
 \partial_{\rho} v\right|_{\rho=0}
 	\times \delta(\rho) \right)
 	+\Psi^{-2} g
 	+ \Psi^{-1}v.
\end{align}
For small enough $\rho$ 
 (which, without loss of generality,
  can be assumed by choosing the cutoffs defining 
the pseudodifferential operators 
 appropriately small) the symbol
\begin{equation}
 	\label{oprho1}
 	1-\rho\frac{\sum_{j,k=1}^{2n-1}
  \tau^{jk}(x,\rho) \xi_j \xi_k}
{\eta^2	+ \Xi^2(x,\xi)}
 	\end{equation}
 	is non-zero, and so (shrinking the
 	 support of $v$ if necessary) 
 we can apply a parametrix of the operator with
 	symbol \eqref{oprho1} to both sides of \eqref{tauleft}.  We note
 	the symbol of such an operator is of the form $1+O(\rho)$, where
 	the second term is a symbol of class $\mathcal{S}^0(\Omega)$,
 	which is $O(\rho)$.
We obtain
\begin{align}
\nonumber
v=&
 	\frac{1}{(2\pi)^{2n}}\int
 	\left(1+O(\rho)\right)
\frac{ \partial_{\rho} \widetilde{v}
  (\xi,0) +i\eta \widetilde{g}_b(\xi)}
 	{\eta^2 + \Xi^2(x,\xi)}e^{i x\xi}e^{i\rho\eta} d\xi d\eta\\
 	\label{oprho}
 	& + \Psi^{-3} \left(\left.
\partial_{\rho} v\right|_{\rho=0}
 	\times \delta(\rho) \right)
 	+\Psi^{-2} g
 	+ \Psi^{-1}v .
 	\end{align}
 	
From Lemma \ref{liglem} we have that
\begin{align*}
 	\nonumber \frac{1}{(2\pi)^{2n}}
 \int  O(\rho) &\frac{
 	\partial_{\rho} \widetilde{v} (\xi,0)
 +i\eta \widetilde{g}_b(\xi)}
 	{\eta^2 + \Xi^2(x,\xi)}
 	e^{i x\xi}e^{i\rho\eta} d\xi d\eta\\
 	& = \Psi^{-3} \left(\left.
\partial_{\rho} v \right|_{\rho=0}
 	\times \delta(\rho) \right)
 	+\Psi^{-2} g.
\end{align*}
 	
Returning to \eqref{oprho} we write
\begin{align}
 	\nonumber
v= \frac{1}{(2\pi)^{2n}} \int  
 	\frac{\partial_{\rho} \widetilde{v}(\xi,0)
+i\eta \widetilde{g}_b(\xi)}
 	{\eta^2 + \Xi^2(x,\xi)}&e^{i x\xi}e^{i\rho\eta} d\xi d\eta\\
 	\label{vdvg}
 	& + \Psi^{-3} \left(\left.
\partial_{\rho} v \right|_{\rho=0}
 	\times \delta(\rho) \right)
 	+\Psi^{-2} g
 	+ \Psi^{-1}v.
\end{align}
 The expression above is locally confined to
  a neighborhood of the origin,
 but using coverings
   and a partition of unity (as in the 
 explanation in \eqref{alld}) we can obtain an 
expression for $v$ on all of $\Omega$.  Then 
 inverting an operator of the form
 	$I-\Psi^{-1}$
 	gives an expression for $v$ on $\Omega$:
\begin{equation}
 	\label{subv}
 	v
 	= \Psi^{-2} \left(\left.
\partial_{\rho} v \right|_{\rho=0}
 	\times \delta(\rho) \right) +
 	\Psi^{-1}g + \Psi^{-\infty} v,
\end{equation}
 	as a vector-valued relation, with matrix-valued pseudodifferential
 	operators.

Using the residue calculus, we can take an
  inverse transform in
 	\eqref{vdvg}
 	with respect to $\eta$.  For $\rho\rightarrow 0^+$, we have
\begin{align*}
0 =& \frac{1}{(2\pi)^{2n}} 2\pi i \left(
 	\int \frac{1}{2i|\Xi(x,\xi)|}
 	\partial_{\rho} \widetilde{v} (\xi,0)
 e^{ix\xi}d\xi
 	- \int \frac{|\Xi(x,\xi)|}{2i|\Xi(x,\xi)|}
 	\widetilde{g}_b(\xi)
 	e^{ix\xi}d\xi \right)\\
 	&
 	+ R\circ 
 \Psi^{-3} \left(\left.
 \partial_{\rho} v\right|_{\rho=0}
 	\times \delta(\rho) \right)
 	+R\circ \Psi^{-2} g
 	+ R\circ \Psi^{-1}v\\
 	=&
 	\frac{1}{(2\pi)^{2n-1}} \int \left(
 	\frac{1}{2|\Xi(x,\xi)|}
 	\partial_{\rho} \widetilde{v} (\xi,0)
 	-\frac{1}{2}\widetilde{g}_b(\xi) \right)e^{ix\xi}d\xi\\
 	&
 	+ \Psi^{-2}_b\left( \left.
\partial_{\rho} v\right|_{\rho=0} \right)
 	+ \Psi^{-1}_b g_b
 	+ R \circ \Psi^{-1} v,
\end{align*}
where we apply Lemma \ref{restrict} to the
 	terms with operators 
$R\circ \Psi^{-3}$ and 
$R\circ \Psi^{-2}$ in the second step.
We can now invert the operator with
 symbol $1/2|\Xi(x,\xi)|$ and 
solve for $\partial_{\rho} v |_{\rho=0}$:
\begin{equation}
 	\label{1dno}
 	\frac{\partial v}{\partial \rho}
 	(x,0) = \int |\Xi(x,\xi)| 
 	\widetilde{ g}_b(\xi)e^{ix\xi}d\xi
 	+\Psi_b^0g_b+
\Psi^1_b\circ R\circ \Psi^{-1} v,
\end{equation}
 locally, in a small neighborhood of 
the origin.
 	Alternatively, we could in a similar manner use the residue
 	calculus to take an inverse transform with respect to $\eta$ in
 	\eqref{vdvg} and calculate for $\rho\rightarrow 0^-$ with the same
 	result.
 	
For the term 
$\Psi_b^1 \circ R \circ \Psi^{-1} v$
 we insert \eqref{subv} in the argument:
\begin{align*}
\Psi_b^1 \circ R\circ  \Psi^{-1} v
 	=& \Psi_b^1 \circ R \circ  \Psi^{-1}
\left(\Psi^{-2} 
\left(\left.\partial_{\rho} v\right|_{\rho=0}
 	\times \delta(\rho) \right) +
 	\Psi^{-1}g + \Psi^{-\infty} v\right)\\
=&\Psi_b^1 \circ R \circ \Psi^{-3}
 \left(\left.\partial_{\rho} v\right|_{\rho=0}
 	\times \delta(\rho) \right) +
 \Psi_b^1 \circ R \circ \Psi^{-2}g
  +\Psi_b^1 \circ R\circ \Psi^{-\infty} v\\
 	=&\Psi_b^1 \circ \Psi_b^{-2}
 \left(\left.\partial_{\rho} v\right|_{\rho=0}
 	\right) +
 	\Psi_b^1 \circ \Psi_b^{-1}g_b
 +R\circ \Psi^{-\infty} v\\
 	=& \Psi_b^{-1} \left(\left.
\partial_{\rho} v\right|_{\rho=0}
 	\right) +
 	\Psi_b^{0}g_b + R_b^{-\infty} v .
 	\end{align*}
 \eqref{1dno} above leads to
 	the well-known result that the DNO is a first order operator on
 	the boundary data,
 	with principal term $|\Xi(x,\xi)|$:
\begin{equation*}
 	\left.\frac{\partial v}
 	{\partial \rho}\right|_{\rho=0} =
 	\int |\Xi(x,\xi)| 
 \widetilde{g}_b(\xi)e^{ix\xi}d\xi
 	+\Psi_b^0 g_b+
 	\Psi_b^{-1} \left(\left.
\partial_{\rho} v\right|_{\rho=0} \right)
 	+
 	R_b^{-\infty} v.
\end{equation*}
 	
 Again, using a covering and the local 
  expressions to obtain a global relation, 
 and  solving for
 	(the vector) 
 $\left.\partial_{\rho} v \right|_{\rho=0} $,
  and absorbing extra
$\Psi^{-\infty}_b \left(\left.
\partial_{\rho} v\right|_{\rho=0}
 	\right)$
terms into the remainder term, $R_b^{-\infty}$,
 	leads to the expression:
\begin{equation}
 \label{1der}
\left.\partial_{\rho} v\right|_{\partial\Omega}
 	= |D| g_b + \Psi_b^0 g_b + R^{-\infty}_b
\end{equation}
 	where $|D|$ is defined as the first order operator with symbol
 	locally given by $\sigma(|D|)=|\Xi(x,\xi)|$.
 	
 We can now insert \eqref{1der} in \eqref{vdvg}
   and obtain in a small neighborhood of
the origin
\begin{align*}
\nonumber
v=& \frac{1}{(2\pi)^{2n}}
 \int  \frac{
 		(|\Xi(x,\xi)| +i\eta )
 \widetilde{g}_b(\xi)}
 {\eta^2 + \Xi^2(x,\xi)}
  e^{i x\xi}e^{i\rho\eta} d\xi d\eta\\
& + \Psi^{-3} \left(\left.
\partial_{\rho} v \right|_{\rho=0}
 	\times \delta(\rho) \right)
 	+\Psi^{-2} g
 	+ \Psi^{-1}v +\Psi^{-2}R_b^{-\infty}\\
 	=& \frac{i}{(2\pi)^{2n}}
\int 
 	\frac{\widetilde{g}_b(\xi)}
 	{\eta + i|\Xi(x,\xi)|}e^{i x\xi}e^{i\rho\eta} d\xi d\eta\\
 	& + \Psi^{-3} \left((\Psi_b^1g_b+R_b^{-\infty})\times\delta(\rho) \right)
 	+\Psi^{-2} g
 	+ \Psi^{-1}v  +R^{-\infty},
\end{align*}
 	which we write as
 	\begin{equation}
 	\label{locpoiss} v = \Psi^{-1}g + \Psi^{-1}v+R^{-\infty}.
 	\end{equation}
 	We thus obtain
 	\begin{equation*}
 	v= \Psi^{-1} g +R^{-\infty},
 	\end{equation*}
 on all of $\Omega$.
 \end{proof}
 From the proof of the Theorem we also have the principal symbol of
 the operator $\Psi^{-1}$ acting on $g_b\times
 \delta(\rho)$; it is given locally by
 (the diagonal matrix)
 \begin{equation}
 \label{symlam}
   \frac{i}
 {\eta + i|\Xi(x,\xi)|},
 \end{equation}
 which we note for future reference.  Using the representation as
 in \eqref{1der},
\begin{equation}
 \label{1der2}
 \left.\partial_{\rho} v\right|_{\partial\Omega}
 = \Psi_b^1 g_b + R^{-\infty}_b,
\end{equation}
 we can obtain with Lemma \ref{estinvell}
 estimates for the Poission operator.
 
We first handle the smooth terms, 
 $R^{-\infty}$ and $R_b^{-\infty}$:
\begin{lemma}
\label{lemRemainderEst}
 For  $R^{-\infty}$ and $R_b^{-\infty}$,
 and $g_b$,
 defined as above, we have
  for all $s$
\begin{equation*}
\| R^{-\infty} \|_{W^{s}(\Omega)} 
\lesssim  \|g_b\|_{L^2(\partial\Omega)}
\end{equation*}
and
\begin{equation*}
\| R^{-\infty}_b \|_{W^{s}(\partial\Omega)} 
\lesssim  \|g_b\|_{L^2(\partial\Omega)}.
\end{equation*}
\end{lemma}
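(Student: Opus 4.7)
The strategy is to reduce the lemma to three independent ingredients: classical elliptic regularity for the Dirichlet problem for $2\square$, the fact that smoothing operators are bounded between arbitrary Sobolev spaces, and Theorem \ref{estinvell}. The crucial conceptual point is that the symbols $R^{-\infty}$ and $R_b^{-\infty}$ are not recursively defined objects but rather finite sums of explicit smoothing operators applied to the three quantities $v$, $g_b$, and $\partial_\rho v|_{\rho=0}$ (plus, in the case of $R^{-\infty}$, decomposable or smoothing operators applied to $R_b^{-\infty}$); so it suffices to establish a priori control on these quantities and invoke smoothing.

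First I would establish baseline a priori estimates independently of the symbolic calculation \eqref{1der2}. Each diagonal entry of $2\square$ has elliptic principal symbol $\eta^2 + \Xi^2(x,\xi)$, so the Dirichlet problem for $2\square$ is classically well-posed and gives $\|v\|_{W^{1/2}(\Omega)} \lesssim \|g_b\|_{L^2(\partial\Omega)}$, while boundedness of the Dirichlet--Neumann map as a first order operator on $\partial\Omega$ yields $\|\partial_\rho v|_{\rho=0}\|_{W^{-1}(\partial\Omega)} \lesssim \|g_b\|_{L^2(\partial\Omega)}$.

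Next I would unpack the definition of $R_b^{-\infty}$, whose three constituent types are $R\circ \Psi^{-\infty}(v)$, $\Psi_b^{-\infty}(g_b)$, and $\Psi_b^{-\infty}(\partial_\rho v|_{\rho=0})$. Since each smoothing operator is bounded from any Sobolev space into any other, composing with the baseline estimates of the previous paragraph immediately gives $\|R_b^{-\infty}\|_{W^s(\partial\Omega)} \lesssim \|g_b\|_{L^2(\partial\Omega)}$ for every $s$. For $R^{-\infty}$, the first three constituent types (smoothing operators on $\Omega$ applied to $v$, to $g_b\times\delta(\rho)$, and to $\partial_\rho v|_{\rho=0}\times\delta(\rho)$) are handled the same way, since smoothing kernels acting on a boundary-supported distribution produce a function with $W^s(\Omega)$-norm controlled by any Sobolev norm of the boundary datum. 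The remaining two types are a decomposable $B\in\Psi^{-k}(\Omega)$, $k\ge 1$, applied to $R_b^{-\infty}\times\delta(\rho)$, and a smoothing operator applied to $R_b^{-\infty}$: for the former I invoke Theorem \ref{estinvell} to obtain
\begin{equation*}
\|B(R_b^{-\infty}\times\delta(\rho))\|_{W^s(\Omega)} \lesssim \|R_b^{-\infty}\|_{W^{s-k+1/2}(\partial\Omega)} \lesssim \|g_b\|_{L^2(\partial\Omega)},
\end{equation*}
using the boundary estimate already proved; the latter is even simpler.

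The apparent obstacle is circularity: the expression \eqref{1der2} for $\partial_\rho v|_{\rho=0}$ itself contains an $R_b^{-\infty}$ term, so one cannot use it to estimate $\partial_\rho v|_{\rho=0}$ when trying to estimate $R_b^{-\infty}$. The resolution is to bypass \eqref{1der2} entirely and use only the crude classical estimate $\|\partial_\rho v|_{\rho=0}\|_{W^{-1}(\partial\Omega)} \lesssim \|g_b\|_{L^2(\partial\Omega)}$; the infinite order of smoothing in $\Psi^{-\infty}$ and $\Psi_b^{-\infty}$ absorbs any finite loss of derivatives, so a low-regularity a priori bound is more than sufficient.
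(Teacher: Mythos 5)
Your proposal is correct, and its skeleton is the same as the paper's: unpack the definitions of $R^{-\infty}$ and $R_b^{-\infty}$ into finitely many smoothing (or decomposable) operators applied to $v$, $g_b$, and $\partial_\rho v|_{\rho=0}$, observe that infinite-order smoothing absorbs any finite loss of derivatives, and avoid the circularity in \eqref{1der2} by using only a crude a priori bound on the normal derivative. The genuine difference is in the one substantive step, the weak bound on $\partial_\rho v|_{\rho=0}$: you import it as the classical mapping property of the Dirichlet-to-Neumann map, $L^2(\partial\Omega)\to W^{-1}(\partial\Omega)$ (transposition theory in the sense of Lions--Magenes), whereas the paper proves exactly what it needs by hand --- writing $\partial_\rho v|_{\rho=0}=\int_{-\infty}^0\partial_\rho^2 v\,d\rho$ for $v$ supported near the boundary, using the equation $2\square v=0$ to trade $\partial_\rho^2$ for a second-order tangential operator plus interior-supported terms, invoking interior regularity and the $L^2$ Poisson bound, and then applying a tangential smoothing operator to land in $W^{-\infty}(\partial\Omega)$. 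Your route is shorter but outsources to a classical trace result (note that the normal derivative of an $H^{1/2}(\Omega)$ function has no trace without using the equation, so the citation is doing real work and should be made explicit); the paper's route is self-contained, needs only a $W^{-\infty}$ bound, and is the template reused elsewhere in the author's setting. Your explicit treatment of the decomposable-operator-on-$R_b^{-\infty}$ terms via Theorem \ref{estinvell}, and the ordering (bound $R_b^{-\infty}$ first, then feed it into $R^{-\infty}$), match what the paper's definition of $R^{-\infty}$ intends, and your index bookkeeping in that application is right.
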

\begin{proof}
We note the
$L^2$ estimates for the Poisson operator,
$\|P(g)\|_{L^2(\Omega)} \lesssim
\|g_b\|_{L^2(\partial\Omega)}$
(see for instance \cite{LiMa}).  	
	
 For $R^{-\infty}$, we have by definition
\begin{align*}
\nonumber
\| R^{-\infty} \|_{W^{s}(\Omega)} 
\lesssim& \|u\|_{W^{-\infty}(\Omega)}
+\|g_{b}\|_{W^{-\infty}
	(\partial \Omega)}
+ \|\partial_{\rho}u
\big|_{\partial\Omega}
\|_{W^{-\infty}
	(\partial \Omega)}\\
\nonumber
\lesssim& 
 \|g_{b}\|_{L^2(\partial \Omega)} +
\|\partial_{\rho}u \big|_{\partial\Omega} \|_{W^{-\infty}
	(\partial \Omega)}
\end{align*}
for any $s\ge 0$.

We can estimate boundary values of 
a term, $\partial_{\rho}u
\big|_{\partial\Omega_j}$ by 
assuming support in a neighborhood of
$\partial\Omega$ intersected with $\Omega$
and writing
\begin{align*}
\partial_{\rho}u
\big|_{\rho =0 } 
=& \int_{-\infty}^0 \partial_{\rho}^2 u d\rho \\
=& \int_{-\infty}^0 D_{t}^2 u d\rho + 
 \int_{-\infty}^0 \left( \phi_1 \partial_{\rho}u + \phi_2 u\right) 
  d\rho,
\end{align*}
where $D_{t}^2$ is a second order tangential 
 operator, and $\phi_1$ and $\phi_2$
  are smooth with support in the interior of $\Omega$.
From interior regularity, we have
\begin{equation*}
 \| \phi_j u \|_{W^2(\Omega)} \lesssim \|g_b\|_{L^2(\partial\Omega)}.
\end{equation*}
Thus,
applying a tangential smoothing operator to
both sides and integrating yields
\begin{align*}
\|\partial_{\rho}u
\big|_{\partial\Omega}
\|_{W^{-\infty}
	(\partial \Omega)}
\lesssim \|g_b\|_{L^2(\partial\Omega)}.
\end{align*}

Hence,
\begin{equation*}
\| R^{-\infty} \|_{W^{s}(\Omega)} 
\lesssim  \|g_b\|_{L^2(\partial\Omega)}.
\end{equation*}
 The estimates for 
  $R_b^{-\infty}$ follow similarly.
\end{proof} 
 
 \begin{thrm}
 	\label{poissest}
Let $P$ be the Poisson operator on $\Omega$ for
the system \eqref{PoissEqn}.  Then
 for $s\ge 0$
 	\begin{equation*}
 	\| P (g) \|_{W^{s+1/2}(\Omega)} \lesssim
 	\|g_b\|_{W^{s}(\partial \Omega)}.
 	\end{equation*}
 \end{thrm}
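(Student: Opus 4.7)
The plan is to decompose $P(g)$ using the representation obtained in Theorem \ref{poiss} and then apply Theorem \ref{estinvell} together with Lemma \ref{lemRemainderEst} to each piece. From Theorem \ref{poiss} we know $P(g) = \Psi^{-1} g + R^{-\infty}$, where $g = g_b(x)\times \delta(\rho)$. The point is that the two pieces can each be estimated in the desired Sobolev norm in terms of $\|g_b\|_{W^s(\partial\Omega)}$, provided we verify that the $\Psi^{-1}$ piece is decomposable in the sense of Definition \ref{defnDecomp}.

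First, I would check that the operator $\Psi^{-1}$ appearing in Theorem \ref{poiss} is decomposable. Its principal symbol, recorded in \eqref{symlam}, is
\begin{equation*}
\frac{i}{\eta + i|\Xi(x,\xi)|},
\end{equation*}
which is meromorphic in $\eta$ with a single simple pole at $\eta = -i|\Xi(x,\xi)|$. Since $|\Xi(x,\xi)|$ is the symbol of a first order pseudodifferential operator (in the tangential variables) with positive imaginary part of $-|\Xi|$, and the residue at this pole is constant in $\eta$ (hence a symbol of order $0 = k+1$ with $k = -1$), the symbol meets the requirements of Definition \ref{defnDecomp}. The lower order symbolic terms absorbed into $\Psi^{-1}$ from the proof of Theorem \ref{poiss} can be checked to preserve this structure, since each arises via composition with parametrices of elliptic operators built from $\Gamma_{int}$; if necessary, any non-decomposable remainder can be split off into a $\Psi^{-N}$ term for $N$ arbitrarily large, as permitted by Definition \ref{defnDecomp}.

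Next, I apply Theorem \ref{estinvell} with $k = -1$ and $s$ replaced by $s+1/2$, which yields
\begin{equation*}
\|\Psi^{-1} g\|_{W^{s+1/2}(\Omega)} \lesssim \|g_b\|_{W^{s}(\partial\Omega)}.
\end{equation*}
For the remainder, Lemma \ref{lemRemainderEst} gives $\|R^{-\infty}\|_{W^{s+1/2}(\Omega)} \lesssim \|g_b\|_{L^2(\partial\Omega)} \lesssim \|g_b\|_{W^s(\partial\Omega)}$ for $s\ge 0$. Summing the two estimates delivers the claim.

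The only nontrivial step is the verification that the $\Psi^{-1}$ piece is decomposable, i.e.\ that after the iterative construction in the proof of Theorem \ref{poiss} the full symbol still has the meromorphic structure of Definition \ref{defnDecomp} (up to an arbitrarily smoothing remainder). Since the construction proceeds by inverting $\Gamma_{int}$ and composing with symbols of lower order, and $\Gamma_{int}$ itself has symbol $\eta^2 + \Xi^2(x,\xi)$ factoring as $(\eta - i|\Xi|)(\eta + i|\Xi|)$, each successive correction inherits the same pole structure at $\eta = -i|\Xi(x,\xi)|$. Everything else is a direct invocation of the cited results.
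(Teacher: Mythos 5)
Your argument is correct and is essentially the paper's own proof: the paper likewise invokes the representation $P(g)=\Psi^{-1}g+R^{-\infty}$ from Theorem \ref{poiss}, notes that the $\Psi^{-1}$ operator is decomposable, and concludes via Theorem \ref{estinvell} (with $k=-1$) and Lemma \ref{lemRemainderEst}. The only difference is that you spell out the decomposability check on the symbol \eqref{symlam}, which the paper simply asserts.
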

\begin{proof} 	
We use the representation 
 $P(g) = \Psi^{-1} g +R^{-\infty}$
as in Theorem \ref{poiss},
  where the $\Psi^{-1}$ operator is
decomposable.  The estimates then follow from
Theorem \ref{estinvell} and
Lemma \ref{lemRemainderEst}.
 \end{proof}
 
 Included in the proof of Theorem \ref{poiss} is the
 calculation of the highest order term of the DNO;
 from \eqref{1der} we have in particular the first component of
 the DNO, which we write as, $N^-$
(the $-$ superscript to denote we compute 
 the outward pointing normal derivative):
 \begin{thrm}
 	\label{thrmdno1}
 	\begin{equation}
 	\label{dnoprincipal}
 	N^- g = |D| g_b+
 	\Psi^0_b( g_b) + R^{-\infty}_b.
 	\end{equation}
 \end{thrm}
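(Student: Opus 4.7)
The statement is essentially a corollary of the calculation already carried out in the proof of Theorem \ref{poiss}, and the plan is simply to extract and record it. Since the DNO by definition produces the boundary values of the outward normal derivative of the solution $v$ to $2\square v = 0$ with $v|_{\partial\Omega} = g_b$, and since $|\nabla \rho| = 1$ on $\partial\Omega$ with $\rho$ negative inside, the outward normal derivative agrees (up to conventions) with $\partial_\rho v|_{\rho=0}$, so $N^- g = \partial_\rho v|_{\partial\Omega}$.

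First I would recall equation \eqref{1der}, which was obtained by applying $\Gamma_{int}^{-1}$ to the extended equation \eqref{DirProbCorrOps}, inverting the symbol in $\eta$ via the residue calculus at the pole $\eta = i|\Xi(x,\xi)|$ (which governs the decay as $\rho \to 0^-$ from inside $\Omega$), then inverting the resulting operator with symbol $1/2|\Xi(x,\xi)|$ to solve for $\partial_\rho \widetilde{v}(\xi,0)$. That computation produced
\begin{equation*}
\left.\partial_\rho v\right|_{\partial\Omega} = |D| g_b + \Psi_b^0 g_b + R^{-\infty}_b,
\end{equation*}
where $|D|$ is the first order operator with symbol $|\Xi(x,\xi)|$ as defined in \eqref{xiLb}.

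Second, I would verify that $|D|$ is a well-defined first order operator on $\partial\Omega$: from \eqref{xiLb} we have $\Xi^2(x,\xi) = 2\xi_{2n-1}^2 + \tfrac{1}{2}\sum_{j=1}^{2n-2}\xi_j^2 + O(x)O(\xi^2)$, which is strictly positive (for $\xi \ne 0$) on $\partial\Omega$ in a neighborhood of the reference boundary point, so $|\Xi(x,\xi)|$ is smooth in $x$ and homogeneous of degree one in $\xi$ away from $\xi=0$; multiplying by a suitable cutoff near $\xi=0$ (absorbable into the $\Psi_b^0$ term) yields a bona fide symbol of class $\mathcal{S}^1(\partial\Omega)$. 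Globalizing via a partition of unity as in \eqref{alld} then produces $|D| \in \Psi_b^1(\partial\Omega)$, with the discrepancies absorbed into the $\Psi_b^0 g_b$ and $R_b^{-\infty}$ terms.

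Finally, I would note that since $v$ is a $(0,q)$-form and Proposition \ref{squareOp}(i) gives $2\square$ as a diagonal matrix at the top symbol level with the same symbol $\sigma(\Gamma)$ on each diagonal entry, the computation above applies componentwise and $N^-$ is the diagonal operator whose entries are all equal to $|D|$ modulo $\Psi_b^0$. The statement \eqref{dnoprincipal} then reads off directly. There is no serious obstacle here: the substantive analytic work, namely the reduction to an elliptic interior equation on $\mathbb{R}^{2n}$, the residue calculation, and the bookkeeping of smoothing remainders, is already absorbed in the proof of Theorem \ref{poiss}; only the identification of the leading symbol as $|\Xi(x,\xi)|$ and the recognition that lower-order and smoothing contributions are packaged into $\Psi^0_b$ and $R^{-\infty}_b$ remain.
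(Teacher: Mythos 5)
Your proposal is correct and follows the paper's own route: the paper proves Theorem \ref{thrmdno1} precisely by reading it off from \eqref{1der}, which is established inside the proof of Theorem \ref{poiss} via the residue computation and the inversion of the operator with symbol $1/2|\Xi(x,\xi)|$, exactly as you describe. Your additional remarks on the symbol class of $|D|$ and the componentwise (diagonal) structure are consistent with the paper and require no change.
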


 We now want to write out the highest order terms included in
 $\Psi^0_b ( g)$ in \eqref{dnoprincipal}.
 That is to say,
writing $\partial_{\rho}v
\Big|_{\rho=0} = \Lambda_b^1g_b + 
\Lambda^0_b g_b +\cdots$, we have
 $\Lambda_b^1 = |D|$, and we want to 
calculate an expression for the operator
 $\Lambda_b^0$. 
 
Recall in \eqref{vSoln} we had the relation
\begin{align*}
\nonumber
v=&
\frac{1}{(2\pi)^{2n}}
\Gamma^{-1}_{int} \circ \int\left(
  \partial_{\rho}\widetilde{v}
\Big|_{\rho=0}
+i\eta \widetilde{g}_b(\xi)
\right) 
e^{i\rho\eta} e^{ix\cdot \xi}
d\xi d\eta\\
&- \sqrt{2}\Gamma^{-1}_{int} \circ
S_{int} v
- \sqrt{2}\Gamma^{-1}_{int} \circ
S_{b} g_b
- \Gamma^{-1}_{int} \circ  A v
-\Gamma^{-1}_{int} \circ ( \rho
\tau v ) 
\end{align*}
modulo smooth terms.  
 With $\partial_{\rho}v
 \Big|_{\rho=0} = \Lambda_b^1g_b + 
 \Lambda^0_b g_b +\cdots$, 
and using
\begin{equation*}
\frac{1}{(2\pi)^{2n}}
  \Gamma^{-1}_{int} 
  \circ \int\left( \partial_{\rho}
  \widetilde{v}
  \Big|_{\rho=0}
  +i\eta \widetilde{g}_b(\xi)
  \right) e^{i\rho\eta} e^{ix\cdot \xi}
   d\xi d\eta
= \Theta^+ g + \Psi^{-2}g + R^{-\infty},
\end{equation*} 
we can write the relation as
\begin{align}
\nonumber
v=&
 \Theta^+ g
+ \Psi^{-2}g
+ \Gamma^{-1}_{int} \circ \Lambda^0_b g
- \sqrt{2}\Gamma^{-1}_{int} \circ
S_{int} v
- \sqrt{2}\Gamma^{-1}_{int} \circ
S_{b} g_b\\
 \label{homogE}
&  - \Gamma^{-1}_{int} \circ  A v
-\Gamma^{-1}_{int} \circ ( \rho
\tau v )+ R^{-\infty},
\end{align}
where $\Theta^+$ is defined by
\begin{equation*}
\sigma(\Theta^+)
=i \frac{1}
{\eta + i|\Xi(x,\xi)|}.
\end{equation*}
 The pseudodifferential calculus 
also yields the principal term
 of the  $\Psi^{-2}$ operator in
\eqref{homogE}.   The
 operator arises in the expansion of the
  symbol for the inverse,
$\Gamma^{-1}_{int}$:
\begin{equation*}
 \sigma\left( \Gamma^{-1}_{int} \right)
  = \frac{1}{\eta^2+\Xi^2(x,\xi)}
   + \frac{\partial_{\xi}\Xi^2 \cdot D_x \Xi^2}{(\eta^2+\Xi^2(x,\xi))^3}
   +\cdots.
\end{equation*}
And so the principal symbol of the $\Psi^{-2}$
operator in \eqref{homogE} is given by
\begin{equation}
 \label{psiLowSymbol}
 \frac{\partial_{\xi}\Xi^2 \cdot D_x \Xi^2}{(\eta^2+\Xi^2(x,\xi))^3}
  (|\Xi(x,\xi)|+i\eta)
   = \frac{ \partial_{\xi}\Xi^2 \cdot \partial_x 
   	\Xi^2}{(\eta^2+\Xi^2(x,\xi))^2
   (\eta+i|\Xi(x,\xi)|)}
    .
\end{equation}

For the term $\Lambda^0_b$,
we set $\rho=0$
in \eqref{homogE} and look at the terms of order
$-1$ in $\Xi(x,\xi)$.
 The first term, $\Theta^+$ leads to a 
  term which is homogeneous of 
   order $0$ in $|\Xi(x,\xi)|$.
We go through the other terms individually.
For the operator with symbol as in 
 \eqref{psiLowSymbol} we calculate
\begin{align*}
 \frac{1}{(2\pi)^{2n}}
 \int  \frac{ \partial_{\xi}\Xi^2 \cdot \partial_x 
 	\Xi^2}{(\eta^2+\Xi^2(x,\xi))^2
 	(\eta+i|\Xi(x,\xi)|)}& \widetilde{g}_b(\xi)
  e^{i\xi x}  d\eta d\xi\\
   =& -\frac{3i}{(2\pi)^{2n-1}}
   \int  \frac{ \partial_{\xi}\Xi^2 \cdot \partial_x 
   	\Xi^2}{16 \Xi^4(x,\xi)}& 
    \widetilde{g}_b(\xi)
   e^{i\xi x}   d\xi.
\end{align*}

Next, we have
\begin{equation*}
R\circ 
 \Gamma_{int}^{-1} \circ\Lambda_b^0 g=
\frac{1}{(2\pi)^{2n-1}}
\int \frac{\widetilde{\Lambda_b^0 g_b}
	(\xi)}
{2|\Xi(x,\xi)|}e^{i\xi x}  d\xi
+ \Psi_b^{-2} g.
\end{equation*}
For terms involving $v$ we use the expression
\begin{equation}
\label{PoissonSoln}
v= \Theta^+ g + \Psi^{-2}g
\end{equation}
modulo smoothing terms
as in Theorem \ref{poiss}.

With \eqref{PoissonSoln}, 
 and $s_0(x):= s(x,0)$, we thus have
\begin{align*}
R\circ  \Gamma^{-1}_{int} \circ
S_{int} v = &
- \frac{1}{(2\pi)^{2n}} \int s_0(x) \frac{\eta}
{\eta^2 + \Xi^2(x,\xi)}
\frac{\widetilde{g}_b(\xi)}
{\eta +i |\Xi(x,\xi)|} d\eta
e^{ix\cdot\xi}
d\xi\\
=& - \frac{1}{(2\pi)^{2n-1}}
\int s_0(x)
\frac{\widetilde{g}_b(\xi)}
{4|\Xi(x,\xi)|}
e^{ix\cdot\xi}   d\xi,
\end{align*}
modulo lower order terms.
 Note that any $O(\rho)$ terms from an expansion
  of $s(x,\rho)=s_0(x) + O(\rho)$ lead to
lower order terms by Lemma \ref{liglem}.

Next,
\begin{align*}
R\circ
\Gamma^{-1}_{int} \circ
S_{b} g_b = &
\frac{1}{(2\pi)^{2n}} \int s_0(x)
\frac{\widetilde{g}_b(\xi)}
{\eta^2 + \Xi^2(x,\xi)}
d\eta
e^{ix\cdot\xi} d\xi\\
=&  \frac{1}{(2\pi)^{2n-1}}
\int s_0(x)
\frac{\widetilde{g}_b(\xi)}
{2|\Xi(x,\xi)|}
e^{ix\cdot\xi}  d\xi,
\end{align*}
modulo lower order terms.

Similar to the calculation involving
$\Gamma^{-1}_{int} \circ
S_{int} v$ above, we have
for $\Gamma^{-1}_{int} \circ
A v$
\begin{equation*}
R\circ \Gamma^{-1}_{int} \circ
A v =   \frac{1}{(2\pi)^{2n-1}}
\int a_{0}(x,\xi)
\frac{\widetilde{g}_b(\xi)}
{4\Xi^2(x,\xi)}
e^{ix\cdot\xi}   d\xi
\end{equation*}
modulo lower order terms,
where $a_0(x,\xi) = \sigma(A)\big|_{\rho=0}$.

For the term, $\Gamma^{-1}_{int} \circ (
\rho  \tau v)$,
we use 
\begin{equation*}
\rho \tau v
=\rho \tau\circ \Theta^+ g
+\cdots,
\end{equation*}
where the $\cdots$ means lower order terms or
smoothing terms.  Hence,
modulo lower order terms, we have
\begin{align*}
\rho \tau v
=& \rho \tau\circ \Theta^+ g
\\
=& \rho \frac{i}{(2\pi)^{2n}}
 \sum_{jk} \int
\frac{\tau^{jk}_0(x)\xi_j\xi_k}
{\eta+i|\Xi(x,\xi)|} \widetilde{g}_b(\xi)
e^{i\rho\eta} e^{i x\xi} d\eta d\xi
\\
=&  \frac{1}{(2\pi)^{2n}}
 \sum_{jk}\int
\frac{\tau^{jk}_0(x)\xi_j\xi_k}
{(\eta+i|\Xi(x,\xi)|)^2} \widetilde{g}_b(\xi)
e^{i\rho\eta} e^{i x\xi} d\eta d\xi
,
\end{align*}
where $\tau_0^{jk}(x) := \tau_{jk}(x,0)$,
and thus
\begin{align*}
\Gamma^{-1}_{int} \left( \rho \tau v
\right)
=&  \frac{1}{(2\pi)^{2n}}
\sum_{jk}\int \frac{1}{\eta^2+\Xi^2(x,\xi)}
\frac{\tau^{jk}_0(x)\xi_j\xi_k}
{(\eta+i|\Xi(x,\xi)|)^2} \widetilde{g}_b(\xi)
e^{i\eta\rho} e^{i\xi x} d\eta d\xi
\\
=&  \frac{1}{(2\pi)^{2n}}
\sum_{jk}\int
\frac{1}{\eta-i|\Xi(x,\xi)|}
\frac{\tau^{jk}_0(x)\xi_j\xi_k}
 {\left(\eta+i|\Xi(x,\xi)|\right)^3}
 \widetilde{g}_b(\xi)
e^{i\eta\rho} e^{i\xi x} d\eta d\xi
,
\end{align*}
again, modulo lower order terms.
Integrating over $\eta$ and
setting $\rho=0$ yields
\begin{align*}
R\circ \Gamma^{-1}_{int}  \left( \rho \tau v
\right)
=&  -\frac{1}{(2\pi)^{2n-1}}
\frac{1}{8}  \sum_{jk}   \int 
 \frac{\tau^{jk}_0(x)\xi_j\xi_k}
{|\Xi(x,\xi)|^3}
\widetilde{g}_b(\xi) e^{i\xi x}  d\xi,
\end{align*}
modulo $\Psi^{-2}_bg_b$ and smoothing terms.

We can now read off the symbols
  homogeneous of degree -1
with respect to $|\xi|$ in
\eqref{homogE}:
\begin{align*}
0=& 
-\frac{1}{(2\pi)^{2n-1}}
 \frac{3i}{16}
\int  \frac{ \partial_{\xi}\Xi^2 \cdot \partial_x 
	\Xi^2}{ \Xi^4(x,\xi)}
\widetilde{g}_b(\xi)
e^{i\xi x}   d\xi
 +  \frac{1}{(2\pi)^{2n-1}}
\frac{1}{2}   \int 
 \frac{\widetilde{\Lambda_b^0 g}_b}
{|\Xi(x,\xi)|}e^{i\xi x}  d\xi\\
&- \frac{1}{(2\pi)^{2n-1}}
\frac{\sqrt{2}}{4}\int s_0(x)
\frac{\widetilde{g}_b(\xi)}
{|\Xi(x,\xi)|} e^{i\xi x} d\xi
-\frac{1}{(2\pi)^{2n-1}}
\frac{1}{4} \int a_{0}(x,\xi)
\frac{\widetilde{g}_b(\xi)}
{\Xi^2(x,\xi)}e^{i\xi x}   d\xi\\
&+ \frac{1}{(2\pi)^{2n-1}}
\frac{1}{8}    \sum_{jk} \int 
 \frac{\tau^{jk}_0(x)\xi_j\xi_k}
{|\Xi(x,\xi)|^3}
\widetilde{g}_b(\xi) e^{i\xi x}  d\xi.
\end{align*}

Solving for 
 $\sigma (\Lambda_b^0)(x,\xi)$
yields the
\begin{prop}
	\label{L0Prop}
	\begin{equation*}
	\sigma(\Lambda^0_b)
	= \frac{\sqrt{2}}{2} s_0(x)
	+ \frac{a_{0}(x,\xi)}{2|\Xi(x,\xi)|}
	- \frac{1}{4}
	\sum_{jk}
	\frac{\tau^{jk}_0(x)\xi_j\xi_k}
	{\Xi^2(x,\xi)} + 
	\frac{3i}{8}
	\frac{ \partial_{\xi}\Xi^2 \cdot \partial_x 
		\Xi^2}{ |\Xi(x,\xi)|^3}.
	\end{equation*}
\end{prop}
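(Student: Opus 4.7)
The plan is to read off $\sigma(\Lambda^0_b)$ by restricting the identity \eqref{homogE} to $\rho=0$ and collecting all contributions that are homogeneous of degree $-1$ in $|\Xi(x,\xi)|$. The setup is already in place: the Poisson formula \eqref{PoissonSoln} lets us replace $v$ on the right-hand side by $\Theta^+ g$ plus strictly lower-order terms, and the residue calculus in $\eta$ reduces each $\Gamma^{-1}_{int}\circ(\cdots)$ to an explicit boundary operator.

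First, I would substitute $\partial_\rho v|_{\rho=0}=\Lambda^1_b g_b+\Lambda^0_b g_b+\cdots$ with $\Lambda^1_b=|D|$ (from Theorem \ref{thrmdno1}) into \eqref{vSoln} and verify that the degree-0 contribution in $\Xi$ on both sides is already matched, leaving a clean identity in degree $-1$. Applying $R$ to \eqref{homogE} kills the leading Poisson term's contribution to $\Lambda^0_b$ (it enters only in the next order) and produces the explicit operator equation I need. Each of the five terms $\Gamma^{-1}_{int}\circ \Lambda^0_b g$, $\Gamma^{-1}_{int}\circ S_{int}v$, $\Gamma^{-1}_{int}\circ S_b g_b$, $\Gamma^{-1}_{int}\circ A v$, and $\Gamma^{-1}_{int}\circ(\rho\tau v)$ is then evaluated by an $\eta$-residue at $\eta=i|\Xi|$, using $v=\Theta^+ g$ modulo lower order. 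These computations are already carried out in the excerpt and yield the four expressions contributing to the final formula.

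The one step needing care is the $\Psi^{-2}g$ piece in \eqref{homogE}, which produces the curvature-like correction $\tfrac{3i}{8}\,\partial_\xi \Xi^2\cdot\partial_x \Xi^2/|\Xi|^3$. This comes from the subleading term
\[
\frac{\partial_\xi\Xi^2\cdot D_x\Xi^2}{(\eta^2+\Xi^2(x,\xi))^3}
\]
in the symbol expansion of $\Gamma^{-1}_{int}$ (the standard pseudodifferential composition/inversion formula), contracted against $(|\Xi|+i\eta)\widetilde{g}_b$ and integrated in $\eta$. I would justify this by writing out the composition formula for the parametrix of $\Gamma_{int}$ to two orders, so that $\Gamma^{-1}_{int}$ acting on the distribution $(\partial_\rho v|_{\rho=0}+i\eta \widetilde g_b)\,\delta(\rho)$ expands as $\Theta^+ g$ (order $0$) plus a $\Psi^{-2}$ term with symbol \eqref{psiLowSymbol}. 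Evaluating the $\eta$-residue of that symbol gives the coefficient $-\tfrac{3i}{16}$ shown in the excerpt's display.

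Finally, I would assemble all five contributions, equate the order $-1$ symbols, and apply the operator of symbol $2|\Xi|$ to solve for $\sigma(\Lambda^0_b)$; the resulting identity is exactly the formula claimed. The main obstacle — and the reason the proof is not fully routine — is bookkeeping: one must confirm that $O(\rho)$ perturbations (e.g. $s(x,\rho)=s_0(x)+O(\rho)$ and $\tau^{jk}(x,\rho)=\tau^{jk}_0(x)+O(\rho)$) contribute only to lower order by Lemma \ref{liglem}, and that replacing $v$ by $\Theta^+ g$ in the $S_{int}$, $A$, and $\rho\tau$ terms does not drop any order $-1$ piece. Once this is verified, Proposition \ref{L0Prop} follows directly.
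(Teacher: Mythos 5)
Your proposal follows essentially the same route as the paper: restrict \eqref{homogE} to $\rho=0$, replace $v$ by $\Theta^+ g$ modulo lower order, evaluate each $\Gamma^{-1}_{int}\circ(\cdots)$ term by the $\eta$-residue, treat the curvature term via the subleading symbol \eqref{psiLowSymbol} of the parametrix of $\Gamma_{int}$, equate the degree $-1$ parts, and invert the operator with symbol $1/(2|\Xi(x,\xi)|)$ to solve for $\sigma(\Lambda^0_b)$. The bookkeeping points you flag (the $O(\rho)$ parts of $s$ and $\tau^{jk}$ being lower order by Lemma \ref{liglem}) are exactly how the paper disposes of those terms, so the argument is correct and matches the paper's proof.
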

Finally, we can state the
\begin{thrm}
  \label{dno}
 Modulo pseudodifferential operators of order $-1$, the symbol for
 $N^-$ is given by
\begin{align*}
 \sigma(N^-) (x,\xi)
  =& |\Xi(x,\xi)|\\
  &+\frac{\sqrt{2}}{2} s_0(x)
  + \frac{a_{0}(x,\xi)}{2|\Xi(x,\xi)|}
  - \frac{1}{4}
  \sum_{jk}
  \frac{\tau^{jk}_0(x)\xi_j\xi_k}
  {\Xi^2(x,\xi)} + 
  \frac{3i}{8}
  \frac{ \partial_{\xi}\Xi^2 \cdot \partial_x 
  	\Xi^2}{ |\Xi(x,\xi)|^3}.
 \end{align*}
\end{thrm}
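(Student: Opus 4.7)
The plan is to assemble Theorem \ref{dno} directly from the two pieces of information that have already been extracted in the preceding development, namely Theorem \ref{thrmdno1} for the principal symbol and Proposition \ref{L0Prop} for the subprincipal symbol. Writing the DNO acting on $g_b$ as an asymptotic sum
\begin{equation*}
\partial_{\rho} v\big|_{\rho=0} = \Lambda_b^1 g_b + \Lambda_b^0 g_b + \Psi_b^{-1}g_b + R_b^{-\infty},
\end{equation*}
the symbol of $N^-$ modulo $\lrs^{-1}$ is simply $\sigma(\Lambda_b^1) + \sigma(\Lambda_b^0)$. Theorem \ref{thrmdno1} identifies $\sigma(\Lambda_b^1)=|\Xi(x,\xi)|$, and Proposition \ref{L0Prop} identifies $\sigma(\Lambda_b^0)$ with the four-term expression appearing in the statement of Theorem \ref{dno}. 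So the first step is just to cite those two results and add the symbols.

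The only substantive check is to verify the splitting is internally consistent: the Poisson representation \eqref{vSoln} and its simplification \eqref{homogE} were obtained assuming that one could substitute the ansatz $\partial_\rho v|_{\rho=0}=\Lambda_b^1 g_b + \Lambda_b^0 g_b +\cdots$ into the $\eta$-integral, and that the contribution of the tail $\Psi_b^{-1}g_b$ to the inverse-transform term $R\circ\Gamma^{-1}_{int}\circ(\Lambda_b^{\le -1}g_b)$ is absorbed in the $\lrs^{-1}$ remainder. This follows from Lemma \ref{restrict} applied to $R\circ\Gamma^{-1}_{int}$, which lowers order by one upon restriction, so that a $\Psi_b^{-1}$ input produces a $\Psi_b^{-2}$ contribution on the boundary, well inside the error we allow. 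Equally, the terms absorbed into ``lower order'' in the derivation of Proposition \ref{L0Prop} (for instance the $O(\rho)$ correction $s(x,\rho)-s_0(x)$ treated via Lemma \ref{liglem}, or the subprincipal part of $A$) are all in $\Psi_b^{-1}$ and therefore enter the error term of Theorem \ref{dno}, not its stated symbol.

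The main obstacle, if there is one, is purely bookkeeping: one must ensure that every operator inserted into \eqref{homogE} is expanded to the correct order in $|\Xi(x,\xi)|$ before reading off coefficients, and in particular that the expansion of $\sigma(\Gamma^{-1}_{int})$ used to derive \eqref{psiLowSymbol} really is the only $\lrs^{-2}$ contribution to the homogeneous degree $-1$ balance. Once those consistency points are confirmed, Theorem \ref{dno} follows by collecting $\sigma(\Lambda_b^1)+\sigma(\Lambda_b^0)$ and noting that the remaining operators in the expansion lie in $\Psi_b^{-1}(\partial\Omega)$, with the smoothing remainders absorbed into the $R^{-\infty}_b$ of \eqref{1der2}.
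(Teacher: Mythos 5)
Your proposal is correct and follows the paper's own route: Theorem \ref{dno} is stated there as the immediate combination of the principal term $|\Xi(x,\xi)|$ from Theorem \ref{thrmdno1} (equivalently \eqref{1der}) with the zero-order symbol $\sigma(\Lambda_b^0)$ computed in Proposition \ref{L0Prop}, all lower-order and smoothing contributions being absorbed exactly as you indicate via Lemmas \ref{liglem} and \ref{restrict}. Your consistency checks on the ansatz substitution match the paper's bookkeeping, so nothing further is needed.
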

This is the same as
  Theorem 1.2 in \cite{CNS92}.

\section{The zero order term}
\label{SecZero}
 In this section we will look at the
zero order term of the DNO, and note
its possible vanishing under the hypothesis
 of a 
weakly pseudoconvex domain.  
 The vector field $(L_n - \Lb_n)/2i$
will play a special role in the following sections
 and the behavior of the boundary value operators
in its direction will be studied now.  We 
 use the terminology {\it
  transverse tangential} to refer to a 
  vector field which is tangential and
transverse to the complex tangent space
(also called the vector field of the 
 "missing direction" or the
  "bad direction" in the literature).

We start by recalling 
our notation
used in writing $N^-$.  Let 
 $N_1^-$ denote the operator 
which is given by the principal (first order)
 symbol of $N^-$, homogeneous of degree 1 in
$|\Xi(x,\xi)|$, where $\Xi(x,\xi)$ is given in
\eqref{xiLb}:
\begin{equation}
\label{xiLb2}
|\Xi(x,\xi)| = 
\sqrt{2\xi_{2n-1}^2+2\sum_{k<n}
	 \sigma(\Lb_{bk}) \sigma(L_{bk})}.
\end{equation}

 For the zero operator, we write the symbol
$a_0(x,\xi)$ in Theorem \ref{dno} according to
\begin{equation*}
 a_0(x,\xi) =
  \sum_{j=1}^{2n-1} \alpha_0^j(x) \xi_j.
\end{equation*}
From Theorem \ref{dno}, the zero order operator,
denoted by $N^-_0$, has symbol given by
\begin{align}
\sigma(N^-_0)=&
\frac{\sqrt{2}}{2}s_0(x)
+ \frac{1}{2}  \frac{\sum_{j=1}^{2n-1}
	\alpha^j_0(x)\xi_j}{|\Xi (x,\xi)|}
-\frac{1}{4}
\frac{\tau^{jk}_0(x)\xi_j\xi_k}{\Xi^2(x,\xi)}
\label{dno0}
+ \frac{3i}{8}
\frac{ \partial_{\xi}\Xi^2 \cdot \partial_x 
	\Xi^2}{ |\Xi(x,\xi)|^3},
\end{align}
in a neighborhood of a 
boundary point, which we assume
to be $0\in\partial\Omega$. 
Recall the functions $s_0$,
$\alpha_0^j$ and $\tau_0^{jk}$
as defined in Section \ref{dnosec}.

According to Proposition \ref{squareOp}, $s_0(x)$
is a diagonal matrix. 
(i.e. there are no normal derivatives of $u_I$ for $I\neq J$ which contribute to the term $f_J\omegab_J$ on the right hand side of
$\square u  = f_J \omegab_J$).
We have $s_{0,J}$, the diagonal $(J,J)$-entry of 
the matrix $s_0$, to be of the form
\begin{equation}
\label{sEqn}
s_{0,J}= -2i(-1)^{|J|}  \Im(c^J_{Jn})
+ d_n
\end{equation}
for $n\notin J$,
from Proposition \ref{squareOp} $i)$.

Now let $A_J$ denote the $J^{th}$ row of the matrix of first order operators, $A$.
We need (the vector product)
$A_J\cdot u$, so as to 
calculate the $J^{th}$ component
 of $Au$,
and in particular, we will need the
operators with the transversal tangential, $T$, component in the expression
$A_J\cdot u$ (in applying the results
 to the \dbar-Neumann condition
  in Section \ref{bndryEqn},  we are
interested in the behavior of the 
operators in a microlocal neighborhood
determined by $\psi^-$).

For the contribution of the sum
\begin{equation*}
- \sum_{l\notin J} L_l \Lb_l
- \sum_{l\in J} \Lb_lL_l
\end{equation*}
occurring in Proposition \ref{squareOp}
$i)$ to the $A$ operators,
we handle the case $l=n$ separately
 (again, assuming $n\notin J$):
\begin{align*}
-L_n\Lb_n =&
- \left(
\frac{1}{\sqrt{2}} \frac{\partial}{\partial\rho} + iT
\right)
\left(
\frac{1}{\sqrt{2}} \frac{\partial}{\partial\rho} - iT
\right) \\
=&-\frac{1}{2}
\frac{\partial^2}{\partial\rho^2}
- \frac{\partial^2}{\partial x_{2n-1}^2} +
\frac{i}{\sqrt{2}}\left[
\frac{\partial}{\partial\rho},T
\right]+ O(\rho)\Psi^2 \\
=&-\frac{1}{2}
\frac{\partial^2}{\partial\rho^2}
-\frac{\partial^2}{\partial x_{2n-1}^2}
+ \frac{i}{\sqrt{2}} T^1 + O(\rho)\Psi^2,
\end{align*}
where $T^1$ is defined to be
$\left[
\frac{\partial}{\partial\rho},T
\right]$ at $\rho=0$ (see also
\eqref{LnExpn} below).
We will be interested in the transverse tangential
component of the first order vector field at $\rho=0$
of $-2L_n\Lb_n$, that is,
in
\begin{equation}
\label{alphaT}
\frac{1}{|T^0|}
\left< \frac{2i}{\sqrt{2}} T^1 , \frac{T^0}{|T^0|}
\right>.
\end{equation}
 We use $<\cdot,\cdot>$ to denote the interior
product of two vector fields.  To ease notation
 we will also use the notation of the dot
product to denote the interior product in what
 follows.  Thus
\begin{equation*}
 \frac{1}{|T^0|}
  \frac{2i}{\sqrt{2}} T^1 \cdot
    \frac{T^0}{|T^0|}  :=
  \frac{1}{|T^0|}
  \left< \frac{2i}{\sqrt{2}} T^1 , \frac{T^0}{|T^0|}
  \right>
\end{equation*}
We could calculate this term explicitly, but we will not need to; it will eventually cancel out with another term in the DNO.

For $L_k \Lb_k$, $k\neq n$, we recall
 \eqref{Lj} and write
\begin{equation*}
\Lb_k\Big|_{\rho=0} = \frac{1}{2}\left(
\frac{\partial}{\partial x_{2k-1}}
+ i\frac{\partial}{\partial x_{2k}}
\right)
+\sum_{j=1}^{2n-1}
\overline{\ell}_{j}^k(x)
\frac{\partial}{\partial x_{j}}
,
\end{equation*}
where $\ell_j^k(x)=O(x)$.
We also use the representation
\begin{equation*}
\Lb_k
= \sqrt{2}
\sum \gamma^k_j
\frac{\partial}{\partial \zb_j},
\end{equation*}
where the $\gamma_j^k$ have the property that
\begin{equation*}
\sum_j \gamma_j^k 
\overline{\gamma}_j^l = \delta_{kl},
\quad 1\le k\le n-1
\end{equation*}
for the delta function 
$\delta_{kl}=1$ for $k=l$
and $\delta_{kl}=0$
for $k\neq l$,
and
\begin{equation*}
\sum_j \gamma_j^k 
\frac{\partial \rho}
{\partial \zb_j} = 0,
\quad 1\le k\le n-1.
\end{equation*}

At the boundary point $0\in\partial\Omega$, 
we can write
\begin{equation*}
\Lb_k\Big|_{p=0}=
\left. \frac{1}{2}\left(
\frac{\partial}{\partial x_{2k-1}}
+ i\frac{\partial}{\partial x_{2k}}
\right) \right|_{p=0}
= \sqrt{2}\sum \gamma^k_j(0)
\frac{\partial}{\partial \zb_j},
\end{equation*}
for $k\le n-1$.

As $\Lb_k\cdot T=0$ we have
\begin{align*}
\frac{1}{2} \overline{\ell}_{2n-1}^k(x)
=&- \sqrt{2}
\left(\sum \gamma^k_j(0)
\frac{\partial}{\partial \zb_j}\right) \cdot T + O(x^2)\\
=&- \frac{\sqrt{2}}{2i}
\left(\sum \gamma^k_j(0)
\frac{\partial}{\partial \zb_j}\right) \cdot \Lb_n + O(x^2)\\
=&  i 
\left(\sum \gamma^k_j(0)
\frac{\partial \rho}{\partial \zb_j}\right)  + O(x^2),
\end{align*}
where we use
$T\cdot \partial_{x_j}=O(x)$ for
$j\neq 2n-1$, 
$T\cdot T=\frac{1}{2}$ and
\begin{align*}
\Lb_n =&2\sqrt{2} \sum  
\frac{\partial \rho}{\partial z_j} \frac{\partial}{\partial \zb_j}.
\end{align*}
Hence,
\begin{equation*}
\overline{\ell}_{2n-1}^k(x)
= 2 i \sum \gamma^k_j(0)
\frac{\partial \rho}{\partial \zb_j}
+O(x^2).
\end{equation*}

With 
\begin{equation*}
L_k =\sqrt{ 2} \sum \gammab_l^k \frac{\partial}{\partial z_l},
\end{equation*}
we can write
the coefficient of the transverse tangential
vector field, $T$, in the expression
from $-L_k \Lb_k$ as
\begin{align}
\nonumber
- L_k(\overline{\ell}_{2n-1}^k(x))
=&
- 2i  \sum \gamma^k_j(0)
L_k\left(\frac{\partial \rho}{\partial \zb_j}\right) + O(x)\\
\nonumber
=& - 2i\sqrt{2} \sum \gamma^k_j(0)
\gammab^k_l
\frac{\partial^2 \rho}{\partial z_l \zb_j}
+O(x)\\
\label{LEll}
=&- i\sqrt{2} |L_k|_{\lrl}^2 + O(x),
\end{align}
where $|\cdot|_{\lrl}$ refers to the
length with respect to the Levi metric, which 
 is
define by
\begin{equation*}
ds^2 = 
\sum \frac{\partial^2 \rho}{\partial z_l \zb_j} dz_l d\zb_j.
\end{equation*}

That is,
\begin{equation*}
-2L_k \Lb_k
= - \frac{1}{2}\left(
\frac{\partial^2}{\partial x_{2k-1}^2}
+ \frac{\partial^2}{\partial x_{2k}^2}
\right)
- i2\sqrt{2} |L_k|_{\lrl}^2  \frac{\partial}{\partial x_{2n-1}}
+ \cdots
\end{equation*}
where the $\cdots$ refer to 
second order terms with coefficients
in $O(x)$ or first order terms which
upon contraction with $T$ result in 
$O(x)$ functions.
And similarly,
\begin{equation*}
-2\Lb_k L_k
= - \frac{1}{2}\left(
\frac{\partial^2}{\partial x_{2k-1}^2}
+ \frac{\partial^2}{\partial x_{2k}^2}
\right)
+ i2\sqrt{2} |L_k|_{\lrl}^2  \frac{\partial}{\partial x_{2n-1}}
+ \cdots.
\end{equation*}

Thus the transverse tangential component 
to be included in the operator
$A$
of the first order vector fields
from 
$ - 2\sum_{k\notin J} L_k \Lb_k
- 2\sum_{k\in J} \Lb_kL_k$ 
written in our local coordinates
is given by
\begin{equation}
\label{tangT}
i2\sqrt{2} \left(
\sum_{k\in J} |L_k|_{\lrl}^2
-  \sum_{k\notin J} |L_k|_{\lrl}^2
\right).
\end{equation}

From the first order operators in Proposition
\ref{squareOp}, we see there are also the $T$ components
to be included in the operator $A$ given by
\begin{equation}
\label{TFromProp}
-(-1)^{|J|} 4i \Re(c^J_{Jn}) -
2id_n .
\end{equation}

We now move to the operator $\tau$.
From Proposition \ref{squareOp},
$\tau$ is a diagonal operator.
Let us calculate the asymptotic behavior of the 
entries of the symbol of $\tau$ 
for large $|\xi_{2n-1}|$.
Recall that in the $\tau$ operator,
we collected all the second order
tangential derivatives with
coefficients which are $O(\rho)$.  

We expand    
\begin{equation}
\label{LnExpn}
T=T^0 + \rho T^1 + \rho^2 T^2
+\cdots
\end{equation} 
and
\begin{equation*}
L_j=L_j^0 + \rho L_j^1 + \rho^2 L_j^2
+\cdots
\end{equation*} 
for $1\le j\le n-1$.

The second order $O(\rho)$ 
operators arise from
\begin{align*}
- 2L_n \Lb_n =& -2\left(
\frac{1}{\sqrt{2}} \frac{\partial}{\partial\rho}
+ i(T^0 + \rho T^1 + \cdots) \right)
\left(
\frac{1}{\sqrt{2}} \frac{\partial}{\partial\rho}
- i(T^0 + \rho T^1 + \cdots) \right) \\
=& \cdots - 2\rho T^1 T^0 - 2\rho T^0 T^1 + \cdots
\end{align*}
as well as
\begin{align*}
-2L_j \Lb_j =& -2\left(
L_j^0 + \rho L_j^1+\cdots \right)
\left(
\Lb_j^0 + \rho \Lb_j^1+\cdots \right) \\
=& \cdots - 2\rho L_j^1\Lb_j^0
- 2\rho L_j^0 \Lb_j^1 + \cdots.
\end{align*}

We specifically want, $\tau^{2n-1,2n-1}$, the coefficient
of $\frac{\partial^2}{\partial x_{2n-1}^2}$ in (the diagonal components of) $\tau$.  
Thus, for instance, from the $L_n\Lb_n$,
$\tau^{2n-1,2n-1}$ contains
the coefficient of
\begin{equation*}
-2\left[\left(T^1 \cdot
\frac{T^0}{|T^0|}\right)
\frac{T^0}{|T^0|}\right] T^0 - 
2T^0 \left[\left(
T^1 \cdot \frac{T^0}{|T^0|}
\right) \frac{T^0}{|T^0|}\right],
\end{equation*}   i.e., 
\begin{equation}
\label{t1t0}
\tau^{2n-1,2n-1}
= - 4\sqrt{2} \left< T^1, \frac{T^0}{|T^0|} \right>,
\end{equation}
since there are no contributions from
$-2L_j\Lb_j$, in the form
of $-2L_j^1\Lb_j^0$ and
$-2L_j^0\Lb_j^1$, due to the property that
$L_j\cdot T=0$. 
As we mentioned earlier, we will have no need
  to calculate explicitly the interior product.

Furthermore, we can handle the last term
in \eqref{dno0} by noting that
\begin{align}
\nonumber
\frac{\sum \partial_{x_j}\Xi^2(x,\xi)
	\partial_{\xi_j}\Xi^2(x,\xi)}
{|\Xi(x,\xi)|^3}
= & \frac{\left( 
	O(|\xi_L||\xi_{2n-1}|)
	+ O(\xi_L^2) +
	O(x)O(\xi_{2n-1}^2)\right)
	\cdot O(\xi)}
{|\Xi(x,\xi)|^3}\\
\label{derXiderXi}
=& O\left(\frac{|\xi_L|}{|\Xi(x,\xi)|}
\right)
+ O(x),
\end{align}
for large $\xi_{2n-1}$.

Lastly, to handle the non-diagonal 
 terms in $\sigma(N^-_0)$ we consider
the transverse tangential components of
 the terms in Proposition \ref{squareOp}
$ii)$.  Noting that 
 $[L_j,\Lb_k] \cdot T$ give the entries for
the Levi matrix, and assuming without loss of
 generality that the Levi matrix is diagonal
 (at the given point $0\in\partial\Omega$),
the contributions of such components in 
 the transverse tangential direction are 
  $O(x)$.
The non-diagonal terms in 
 $\sigma(N^-_0)$ are thus in 
$O\left(
\frac{|\xi_L|}{|\Xi(x,\xi)}
\right)+O(x)$. 

 In the expression for the
zero order term of $N^-$ we write
 $(b_J)_J$ to mean the diagonal matrix
whose $(J,J)^{th}$ entry is given by 
 $b_J$.  All terms in the expression 
for $\sigma(N^-_0)$, with the exception
 of error terms, will be diagonal matrices.
Using \eqref{sEqn}, 
\eqref{alphaT}, \eqref{tangT},
\eqref{TFromProp},
and \eqref{t1t0} in the expression for
$\sigma(N^-_0)$ in \eqref{dno0},
 and restricting to the boundary, we have
\begin{prop}
	\label{propertyDNOSquare}
	Let $\sigma(N_0^-)$ be the zero order 
	symbol in the expansion of the DNO
	associated with the $2\square$ operator.
	Then in a microlocal neighborhood of the 
	boundary point $0\in\partial \Omega$
	for large $|\xi_{2n-1}|$ we have
	\begin{align*}
	\sigma(N^-_0)=&
	\frac{\sqrt{2}}{2}
	\left(
	-2i(-1)^{|J|}  \Im(c^J_{Jn})
	+ d_n
	\right)_J\\
	&+\left(
	(-1)^{|J|} 2 \Re(c^J_{Jn}) +
	d_n 
	-  \left< T^1 , \frac{T^0}{|T^0|}
	\right> \right)_J
	\frac{\xi_{2n-1}}{|\Xi(x,\xi)|}
	\\
	&-\sqrt{2}
	\left(
	\sum_{k\in J} |L_{bk}|_{\lrl}^2
	-  \sum_{k\notin J} |L_{bk}|_{\lrl}^2
	\right)_J
	\frac{\xi_{2n-1}}{|\Xi (x,\xi)|}
	\\
	&-
	\sqrt{2}
\left(
	\left< T^1, \frac{T^0}{|T^0|} \right>
\right)_J
	\frac{\xi_{2n-1}^2}{\Xi^2(x,\xi)}
	+O\left(
	\frac{|\xi_L|}{|\Xi(x,\xi)}
	\right) 
	+ O(x).
	\end{align*}
\end{prop}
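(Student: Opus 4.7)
The proposition is essentially a matter of assembling the individual computations already performed in Section \ref{SecZero} into the formula for $\sigma(N_0^-)$ coming from Theorem \ref{dno}, namely equation \eqref{dno0}. My plan is to take each of the four summands of \eqref{dno0} in turn, plug in the closed-form expressions derived above, and then verify that the remaining contributions fall into the error class $O(|\xi_L|/|\Xi(x,\xi)|) + O(x)$ in the microlocal region determined by $\psi^-$.

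The first summand $\tfrac{\sqrt2}{2}s_0(x)$ is immediate from \eqref{sEqn}, applied diagonally in $J$, and produces the first line of the claimed formula. For the second summand $\tfrac{1}{2}a_0(x,\xi)/|\Xi(x,\xi)|$, I would write $a_0(x,\xi)=\sum\alpha_0^j(x)\xi_j$ and argue that in the $\psi^-$ regime (where $|\xi_{2n-1}|\gtrsim|\xi_L|$), only the transverse tangential coefficient $\alpha_0^{2n-1}$ contributes to leading order; the other $\alpha_0^j\xi_j$ are absorbed into the error $O(|\xi_L|/|\Xi(x,\xi)|)$. The coefficient $\alpha_0^{2n-1}$ is then the sum of three contributions collected earlier: the $T$-component of $-2L_n\Lb_n$ given by \eqref{alphaT}, the $T$-components of $-2\sum_{k\neq n}L_k\Lb_k$ and $-2\sum_{k\in J}\Lb_kL_k$ given by \eqref{tangT}, and the transverse tangential first-order terms from Proposition \ref{squareOp} itself, namely \eqref{TFromProp}. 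Adding these three pieces reproduces the coefficient of $\xi_{2n-1}/|\Xi(x,\xi)|$ in the second line of the proposition.

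For the third summand $-\tfrac{1}{4}\tau^{jk}_0(x)\xi_j\xi_k/\Xi^2(x,\xi)$, note again that in the $\psi^-$ regime the dominant index pair is $j=k=2n-1$; any mixed term $\tau^{jk}_0\xi_j\xi_k$ with $j$ or $k<2n-1$ contributes at most $O(|\xi_L|/|\Xi|)$. The value of $\tau^{2n-1,2n-1}$ is exactly \eqref{t1t0}, and substituting gives the $\xi_{2n-1}^2/\Xi^2(x,\xi)$ term of the proposition. The fourth summand $\tfrac{3i}{8}\partial_\xi\Xi^2\cdot\partial_x\Xi^2/|\Xi|^3$ is bounded directly by the estimate \eqref{derXiderXi}, and therefore contributes only to the error. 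Finally, the off-diagonal entries of $N_0^-$ come from Proposition \ref{squareOp} $ii)$, i.e.\ the commutators $[\Lb_k,L_l]$; after choosing coordinates that diagonalize the Levi form at $0\in\partial\Omega$, these commutators have vanishing $T$-component at the origin, so their contribution is absorbed in $O(x)$.

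The main obstacle is purely bookkeeping: tracking the factors of $\sqrt2$ that enter through the normalization of $L_n$, the sign $(-1)^{|J|}$ coming from the two cases $n\in J$ and $n\notin J$ in Proposition \ref{squareOp}, and the conversion between a first-order vector field $c(x)\partial_{x_{2n-1}}$ and its symbol $ic(x)\xi_{2n-1}$ in the symbol of $A$. Once the three transverse tangential contributions to $\alpha_0^{2n-1}$ are reconciled consistently with the sign conventions in \eqref{cCoeff} and with the definition $T^0=T|_{\partial\Omega}$, and once the combination $\xi_{2n-1}^2/\Xi^2(x,\xi)$ is separated from $\xi_{2n-1}/|\Xi(x,\xi)|$ correctly, the claimed formula follows by direct addition. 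No new analytic ingredient beyond Theorem \ref{dno} and the identities \eqref{sEqn}, \eqref{alphaT}, \eqref{tangT}, \eqref{TFromProp}, \eqref{t1t0}, and \eqref{derXiderXi} is required.
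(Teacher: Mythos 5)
Your proposal is correct and follows essentially the same route as the paper: Section \ref{SecZero} proves the proposition exactly by substituting \eqref{sEqn}, \eqref{alphaT}, \eqref{tangT}, \eqref{TFromProp}, and \eqref{t1t0} into the expansion \eqref{dno0} from Theorem \ref{dno}, bounding the last term by \eqref{derXiderXi}, and absorbing the off-diagonal entries (via Proposition \ref{squareOp} $ii)$ with the Levi form diagonalized at $0$) into $O(x)+O\bigl(|\xi_L|/|\Xi(x,\xi)|\bigr)$. The only work left in your outline is the same bookkeeping of $\sqrt{2}$-factors and signs that the paper itself carries out.
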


\section{Boundary equation}
\label{bndryEqn}

The \dbar-Neumann problem for a $(0,q)$-form
 $f\in L^2_{(0,q)}(\Omega)$ is to find a solution
$u\in L^2_{(0,q)}(\Omega)$ to
 $\square u = f$.  As the $\square$ operator
consists of $\mdbar^{\ast}$ operators, boundary
 conditions arise on $u$ so as to
  fulfill conditions
regarding its inclusion in 
  the domain of $\mdbar^{\ast}$.  The 
 \dbar-Neumann is the boundary value problem
\begin{equation*}
 \square u = f \qquad \mbox{in } \Omega
\end{equation*}
with boundary conditions
\begin{align*}
& u \rfloor \mdbar \rho = 0\\
& \mdbar u \rfloor \mdbar \rho = 0
\end{align*}
 on $\partial\Omega$.

The first boundary condition
 $u\rfloor\mdbar \rho =0$ is just
$u_{J} = 0$ on $\partial\Omega$ for any 
 $J$ such that $n\in J$.
For the second condition involving 
 $\mdbar u$, we 
note
\begin{equation*}
 \mdbar u = \sum_{J\not\owns n}
\left(
  (-1)^{|J|} \Lb_n u_J +
c^J_{Jn} u_J+
   \varepsilon^{kJ_{\hat{k}n}}_{Jn}
\Lb_k u_{J_{\hat{k}}n}
 \right) \omegab_{Jn} + \cdots,
\end{equation*} 
where $\cdots$ refers to terms with no
 $\omegab_n$ component.  
 
Assuming the boundary condition
 $u\rfloor\mdbar\rho=0$,
we have
$\mdbar u \rfloor \mdbar \rho = 0$
 is equivalent to 
\begin{equation}
 \label{bcLn}
 \Lb_n u_J + (-1)^{|J|}c^J_{Jn} u_J = 0
\end{equation}
 on $\partial\Omega$
for $J$ such that $n\notin J$.
 
We  
 write the solution $u$
in terms of a Green's solution and Poisson
 solution:
\begin{equation*}
 u = G(2f) + P(u_b)
\end{equation*} 
where the operators $G$ and $P$ satisfy
 \begin{equation}
\label{GError}
\begin{aligned}
&2\square\circ G = I\\
&R\circ G = 0
\end{aligned}
\end{equation}
and
 \begin{equation}
\label{PError}
\begin{aligned} 
&2\square\circ P = 0\\
&R\circ P = I,
\end{aligned}
\end{equation}
respectively.

The $J^{th}$ component will be written
 $u_J = G_J(2f) + P_J(u_b)$.  
$\Lb_n u_J$ can now be written
 on $\partial\Omega$ as
\begin{equation*}
 R\circ \Lb_n u_J = 
  \frac{1}{\sqrt{2}}
 R\circ \partial_{\rho} \circ
   G_J(2f) + 
\left(   
   \frac{1}{\sqrt{2}} N^-
   -iT^0
\right) u_{b,J},
\end{equation*}
where $u_{b,J}$ is the $J^{th}$ component of
 $u_b$.

From \cite{Eh18_halfPlanes}
 (Theorem 3.3), we use the 
 the property that
\begin{equation*}
 R\circ \partial_{\rho} \circ
 G_J \equiv R\circ \Psi^{-1}
\end{equation*}
 modulo smoothing terms.  
The boundary condition \eqref{bcLn}
 for $J\not\owns n$ can therefore be written
as
\begin{equation*}
\left( \frac{1}{\sqrt{2}} N^-_1
-i T^0\right) u_{b,J} + (-1)^{|J|} c^J_{Jn} u_{b,J}
+ \frac{1}{\sqrt{2}}
\left(N^-_{0}u_{b} \right)_J
= R\circ \Psi^{-1} f,
\end{equation*}
modulo lower order terms in
 $u_b$.  
As mention above in Section \ref{SecZero},
we will concentrate on the microlocal
 region determined by the
symbol, $\psi^{-}$, that is, 
 the region in which (in local coordinates)
$\xi_{2n-1}$ is large and negative.  
 The reason is that in the other regions,
estimates for $u_b$ can be obtained by
 inverting the operator,
$1/\sqrt{2} N_1^- -iT^0$.

We will need the behavior of the 
operators in the microlocal neighborhood
of a boundary point, $0\in\partial\Omega$
and with support in the support of
the symbol $\psi^{-}$.  
To this end, we first consider the limit of $N^-_0$
as $\xi_{2n-1}\rightarrow -\infty$.

Let us write
\begin{equation*}
 \left(N^-_{0}u_{b} \right)_J
  = N^-_{0,J}u_{b,J} + 
   N^-_{0,JX}u_{b},
\end{equation*}
where $N^-_{0,J}$ is the $(J,J)$ entry in 
 the matrix $N_0^-$ and
$ N^-_{0,J_X}$ is the matrix consisting of
 the $J^{th}$ row of $N_0^-$, with the
$(J,J)$ entry replaced with 0, and zeros 
 elsewhere. 

From Proposition 
\ref{propertyDNOSquare}, 
\begin{equation*}
\sigma\left(N^-_{0,J_X}\right)
 = O\left(
 \frac{|\xi_L|}{|\Xi(x,\xi)}
 \right) 
 + O(x),
\end{equation*} 
and as
$\xi_{2n-1}\rightarrow -\infty$,
we see
\begin{align}
\nonumber
\sigma(N^-_{0,J} ) \rightarrow&
\frac{\sqrt{2}}{2}
\left(  -2i(-1)^{|J|}  \Im(c^J_{Jn})
+ d_n
\right)\\
\nonumber
&   -\frac{1}{\sqrt{2}}
\left(
(-1)^{|J|} 2 \Re(c^J_{Jn}) +
d_n
- \left<  T^1 , \frac{T^0}{|T^0|}
\right>
\right)\\
\nonumber
& +  
\sum_{k\in J} |L_{bk}|_{\lrl}^2
-  \sum_{k\notin J} |L_{bk}|_{\lrl}^2
- \frac{1}{\sqrt{2}}
\left<  T^1 , \frac{T^0}{|T^0|}
\right> + O(x)+O\left(
\frac{|\xi_L|}{|\Xi(x,\xi)}
\right)   \\
\nonumber
=& -(-1)^{|J|} \sqrt{2} c^J_{Jn}
+  
\sum_{k\in J} |L_{bk}|_{\lrl}^2
-  \sum_{k\notin J} |L_{bk}|_{\lrl}^2\\
\label{zeroOrderOrig}
&
+O(x) +O\left(
\frac{|\xi_L|}{|\Xi(x,\xi)}
\right)    .
\end{align}
We could also at this point proceed to
calculate each of the
$c_{Jn}^J$, but as we will see, these will also cancel in what follows.
We will denote the zero order operator
$ (-1)^{|J|} c^J_{Jn}
+ \frac{1}{\sqrt{2}}
N^-_{0} $ (with $ c^J_{Jn}$ referring to
the operator with a single diagonal entry) by $\Upsilon^0_J$.  From above we have
\begin{align*}
\nonumber
\sigma \left(\Upsilon^0_J \right)
\rightarrow&  (-1)^{|J|} c^J_{Jn} + \frac{1}{\sqrt{2}}
\left(-(-1)^{|J|} \sqrt{2} c^J_{Jn}
+  
\sum_{k\in J} |L_{bk}|_{\lrl}^2
-  \sum_{k\notin J} |L_{bk}|_{\lrl}^2
\right)
\\
&+O(x) +O\left(
\frac{|\xi_L|}{|\Xi(x,\xi)}
\right)   \\
=&\frac{1}{\sqrt{2}}
\left(
\sum_{k\in J} |L_{bk}|_{\lrl}^2
-  \sum_{k\notin J} |L_{bk}|_{\lrl}^2
\right) +O(x)+O\left(
\frac{|\xi_L|}{|\Xi(x,\xi)}
\right)   ,
\end{align*}
as $\xi_{n-1}\rightarrow
- \infty$, recalling that
$N_{0,J_X}^- =O(x)+O\left(
\frac{|\xi_L|}{|\Xi(x,\xi)}
\right)$.

We collect our results in the 
following Proposition
\begin{prop}
	 \label{propNMinusT}
 The boundary equation for the
  \dbar-Neumann problem has the form
\begin{equation}
\label{nMinusTUpOrig}
\left( \frac{1}{\sqrt{2}} N^-_1
-i T^0\right) u_{b,J} +
\Upsilon^0_J u_{b}
=  R\circ \Psi^{-1}f,
\end{equation}
where
\begin{equation}
\label{upDef}
 \Upsilon^0_J u_{b}
  = \Upsilon^0_{J,J} u_{b,J}
   + \sum_{K\neq J} \Upsilon^0_{J,K} u_{b,K},
\end{equation}
and
 $\Upsilon^0_{J,J}$ is a psedodifferential operator
  of order 0,
 whose symbol has the property
\begin{equation}
 \label{upProp1}
 \sigma(\Upsilon^0_{J,J})=
  \frac{1}{\sqrt{2}}
 \left(
 \sum_{k\in J} |L_{bk}|_{\lrl}^2
 -  \sum_{k\notin J} |L_{bk}|_{\lrl}^2
 \right) +O(x)+O\left(
 \frac{|\xi_L|}{|\Xi(x,\xi)}
 \right) 
\end{equation}
and
 $\Upsilon^0_{J,K}$
  is a psedodifferential operator
of order 0,
whose symbol has the property
\begin{equation}
 \label{upProp2}
\sigma(\Upsilon^0_{J,K})=
O(x)+O\left(
\frac{|\xi_L|}{|\Xi(x,\xi)}
\right) 
\end{equation}
for $K\neq J$.
\end{prop}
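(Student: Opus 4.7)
The plan is to treat this proposition as the culmination of the calculations already carried out in Sections \ref{dnosec} and \ref{SecZero}: the statement is essentially a packaging of the microlocal asymptotics of $N^-_0$ into the boundary equation (\ref{bcLn}). First I would decompose the solution as $u = G(2f) + P(u_b)$ via the Green's operator $G$ from (\ref{GError}) and the Poisson operator $P$ from (\ref{PError}), and translate (\ref{bcLn}) into a pseudodifferential equation on $\partial\Omega$. Using $\Lb_n = \tfrac{1}{\sqrt 2}\partial_\rho - iT$ in local coordinates and restricting to $\rho=0$, the $J^{\text{th}}$ component yields
\begin{equation*}
 R\circ \Lb_n u_J = \tfrac{1}{\sqrt{2}} R\circ\partial_\rho\circ G_J(2f) + \left(\tfrac{1}{\sqrt{2}} N^- - iT^0\right) u_{b,J},
\end{equation*}
where $N^-$ is the DNO for $2\square$ computed in Section \ref{dnosec}. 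The Green's contribution is absorbed into $R\circ\Psi^{-1}f$ by Theorem 3.3 of \cite{Eh18_halfPlanes}. Substituting the expansion $N^- = N^-_1 + N^-_0 + \Psi^{-1}_b$ from Theorem \ref{dno} into (\ref{bcLn}) and setting $\Upsilon^0_J := (-1)^{|J|} c^J_{Jn} + \tfrac{1}{\sqrt{2}} N^-_{0}$ (acting on the $J$-row) produces (\ref{nMinusTUpOrig}) up to lower-order terms in $u_b$.

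Next I would split $\Upsilon^0_J$ as in (\ref{upDef}) into the diagonal operator $\Upsilon^0_{J,J}$ (built from $(-1)^{|J|}c^J_{Jn}$ and the $(J,J)$-entry of $N^-_0$) and the off-diagonal operators $\Upsilon^0_{J,K}$ for $K\neq J$ (coming from the $N^-_{0,J_X}$ block in the notation of Section \ref{SecZero}). For the off-diagonal part, Proposition \ref{propertyDNOSquare} directly gives $\sigma(N^-_{0,J_X}) = O(|\xi_L|/|\Xi(x,\xi)|) + O(x)$, which establishes (\ref{upProp2}).

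The main work is the diagonal symbol (\ref{upProp1}), which requires the asymptotic analysis in the microlocal region $\xi_{2n-1}\to -\infty$ determined by $\psi^-$. There one has $\xi_{2n-1}/|\Xi(x,\xi)| \to -1/\sqrt{2}$ and $\xi_{2n-1}^2/\Xi^2(x,\xi) \to 1/2$, modulo $O(|\xi_L|/|\Xi(x,\xi)|)$. Plugging these limits into the four terms of $\sigma(N^-_0)$ from Proposition \ref{propertyDNOSquare} and adding $(-1)^{|J|} c^J_{Jn}$ yields the limit computation that already appears in the display leading to (\ref{zeroOrderOrig}): the contributions involving $\Im(c^J_{Jn})$ and $\Re(c^J_{Jn})$ assemble into $\sqrt{2}\,(-1)^{|J|}c^J_{Jn}$ and cancel against $(-1)^{|J|}c^J_{Jn}$; the two copies of $\langle T^1, T^0/|T^0|\rangle$ (one from the first-order $[\partial_\rho, T]$ contribution to $A$ and one from $\tau^{2n-1,2n-1}$ via (\ref{t1t0})) cancel against each other after the limit $\xi_{2n-1}^2/\Xi^2 \to 1/2$; and the two $d_n$ contributions cancel as well. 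What remains is precisely the Levi-form expression $\tfrac{1}{\sqrt 2}\bigl(\sum_{k\in J} |L_{bk}|_{\lrl}^2 - \sum_{k\notin J} |L_{bk}|_{\lrl}^2\bigr)$, up to the advertised error terms.

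The principal obstacle is this cancellation bookkeeping: one must be careful that the signs carried by $(-1)^{|J|}$, $(-1)^{|J\cup\{n\}|}$, and the $\varepsilon$-symbols (recall (\ref{simpleVarEpsilon})) line up correctly so that both the $c^J_{Jn}$ and the transverse-tangential commutator terms truly drop out. Once this is verified and packaged into matrix form indexed by increasing $J$, the Proposition follows immediately; no new analytical input beyond Proposition \ref{propertyDNOSquare} and the Poisson estimates of Theorem \ref{poissest} is required.
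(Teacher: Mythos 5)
Your proposal follows the paper's own derivation essentially verbatim: the same decomposition $u = G(2f)+P(u_b)$ with the Green's contribution absorbed via Theorem 3.3 of \cite{Eh18_halfPlanes}, the same definition $\Upsilon^0_J = (-1)^{|J|}c^J_{Jn} + \tfrac{1}{\sqrt{2}}N^-_0$, the same diagonal/off-diagonal split, and the same microlocal limit $\xi_{2n-1}\to-\infty$ applied to Proposition \ref{propertyDNOSquare}, with the identical cancellations of $d_n$, of $\left< T^1, T^0/|T^0|\right>$, and of $c^J_{Jn}$. The only quibble is a sign in your bookkeeping: the $\Re(c^J_{Jn})$ and $\Im(c^J_{Jn})$ contributions assemble into $-\sqrt{2}\,(-1)^{|J|}c^J_{Jn}$ (as in \eqref{zeroOrderOrig}), and it is $\tfrac{1}{\sqrt{2}}$ times this that cancels the $+(-1)^{|J|}c^J_{Jn}$ from \eqref{bcLn} — but the mechanism and conclusion you state are exactly the paper's.
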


At this point, we take a moment to
review how previous work on inverting
the Kohn Laplacian, $\square_b$, defined
on the boundary, could be useful in
 solving \eqref{nMinusTUpOrig}.   
An inverse to $\square_b$ in the case of
strictly pseudoconvexity was studied in 
\cite{FoSt74}, and we first
relate our boundary equation
 \eqref{nMinusTUpOrig} to that of
\cite{FoSt74}.
We simplify our equation, throwing away
the $O(x)$ and $O\left(
\frac{|\xi_L|}{|\Xi(x,\xi)}
\right)$ terms (for the purpose of illustration 
only) and consider
\begin{equation*}
\left( \frac{1}{\sqrt{2}} N^-_1
-i T^0\right) u_{b,J} +
\mathscr{Y}^0_J u_{b,J}
=  R\circ \Psi^{-1}f,
\end{equation*} 
with
\begin{equation*} 
 \sigma(\mathscr{Y}^0_J)
  = \frac{1}{\sqrt{2}}
  \left(
  \sum_{k\in J} |L_{bk}|_{\lrl}^2
  -  \sum_{k\notin J} |L_{bk}|_{\lrl}^2
  \right).
\end{equation*}

We now apply the operator
$\frac{1}{\sqrt{2}} N^-_1
+i T^0$ to both sides:
\begin{align}
\nonumber
\left( \frac{1}{2} (N^-_1)^2
+ (T^0)^2\right) u_{b,J} +
&\frac{i}{\sqrt{2}} [T^0 ,N^-_1]u_{b,J}\\
&+ \left(\frac{1}{\sqrt{2}} N^-_1
+i T^0
\right)\circ \mathscr{Y}^0_J u_{b,J}
\label{NPlusT}
= R\circ
\Psi^{0}f.
\end{align}

We first note some properties of the operators involved.
Consider the first order operator
$[T^0 ,N^-_1]$.  
By expanding the symbol
for $N_1^-$ for large
$|\xi_{2n-1}|$,
we see (for large $|\xi_{2n-1}|$)
\begin{align*}
\sigma([T^0 ,N^-_1])
=& \partial_{x_{2n-1}}
\sigma\left(N^-_1 \right)\\
=& O(|\xi_L|)+O(x)O(|\xi|)
\end{align*}
modulo $\lrs^{-\infty}(\partial\Omega)$.
We also write the operator
$ \frac{1}{2} (N^-_1)^2
+ (T^0)^2$ in terms of the vector fields
$L_j$ and $\Lb_j$.  The symbol of $(N^-_1)^2$
is given by
\begin{align}
\nonumber
\sigma\left((N^-_1)^2\right)
= & \sigma(N^-_1)\sigma(N^-_1)
-i\partial_{\xi}\sigma(N^-_1)
\cdot \partial_{x}\sigma(N^-_1)
+\cdots\\
\label{SymN2}
=& \Xi^2(x,\xi) + 
O(|\xi_L|)+O(x)
\end{align}
modulo $\lrs^{-1}(\partial\Omega)$.
For the term $ \Xi^2(x,\xi)$ we have
 from \eqref{xiLb}
\begin{equation*}
\Xi^2(x,\xi)=
2\xi_{2n-1}^2
+ 2\sum_j \sigma(\Lb_{bj}) \sigma(L_{bj})
\end{equation*}
and for $\sigma(\Lb_{bj}) \sigma(L_{bj})$
  we have the relations
\begin{align*}
\sigma(\Lb_{bj}L_{bj}) =&
\sigma(\Lb_{bj}) \sigma(L_{bj})
-i\sum_j \partial_{\xi} \sigma(\Lb_{bj}) \cdot
\partial_{x}\sigma(L_{bj})\\
=& \sigma(\Lb_{bj}) \sigma(L_{bj})
+\Lb_{bj}(\ell_{2n-1}^j) (i\xi_{2n-1})
+ O(x)O(|\xi|)  \\
=&   \sigma(\Lb_{bj}) \sigma(L_{bj})
+ \sqrt{2}\xi_{2n-1}  |L_{bj}|^2_{\lrl}
+  O(x)O(|\xi|) ,
\end{align*}
modulo $\xi_L$ terms and
symbols of class $\lrs^0$,
where we use \eqref{LEll} in the last line.

Similarly, we have for
$ \sigma(\Lb_{bj})\sigma(L_{bj})$
\begin{equation*}
\sigma(L_{bj}\Lb_{bj}) =
     \sigma(L_{bj})\sigma(\Lb_{bj})
- \sqrt{2}\xi_{2n-1}  |L_{bj}|^2_{\lrl}
+  O(x)O(|\xi|) +\cdots.
\end{equation*}

Then the expression for
 $\Xi^2(x,\xi)$ gives
\begin{multline*}
\Xi^2(x,\xi)= 2\xi_{n-1}^2
+ 2\sum_{k\notin J} \sigma(\Lb_{bk}L_{bk})
+ 2\sum_{k\in J} \sigma(L_{bk}\Lb_{bk}) \\
- 2\sqrt{2}\xi_{n-1} \sum_{k\notin J}
|L_{bk}|^2_{\lrl}
+ 2\sqrt{2}\xi_{n-1} \sum_{k\in J} |L_{bk}|^2_{\lrl}
+  O(x)O(|\xi|)
+O(|\xi_L|),
\end{multline*}
which, combined with the expression
in \eqref{SymN2} above, yields
(for the $(J,J)$-entry)
\begin{multline*}
\sigma\left(
\frac{1}{2}(N^-_1)^2
+ (T^0)^2\right)
= \sum_{k\notin J} \sigma(\Lb_{bk}L_{bk})
+ \sum_{k\in J} \sigma(L_{bk}\Lb_{bk})\\
- \sqrt{2}\xi_{n-1} \sum_{k\notin J}
|L_{bk}|^2_{\lrl}
+ \sqrt{2}\xi_{n-1} \sum_{k\in J} 
 |L_{bk}|^2_{\lrl}
+  O(x)O(|\xi|)
+O(|\xi_L|).
\end{multline*}

Furthermore,
\begin{align*}
\sigma\left(
\left(\frac{1}{\sqrt{2}} N^-_1
+i T^0
\right)\circ \mathscr{Y}^0_J \right)
=& \sqrt{2}|\xi_{n-1}|
\left(
\sum_{k\in J} |L_k|_{\lrl}^2
-  \sum_{k\notin J} |L_k|_{\lrl}^2
\right)\\
&+ O(|\xi_L|)
+O(x)O(|\xi|)
\end{align*}
for $|\xi_L|\ll |\xi_{2n-1}|$,
 and $\xi_{2n-1} <0$,
modulo lower order symbols.

\eqref{NPlusT} is thus reduced to studying
\begin{equation*}
\sum_{k\notin J}\Lb_{bk}L_{bk}
+ \sum_{k\in J} L_{bk}\Lb_{bk}
\end{equation*}
modulo first order operators with
symbols which can be made arbitrarily small in a 
microlocal neighborhood of the boundary point
$0\in\partial\Omega$ for
$|\xi_L|\ll \xi_{2n-1}$.

In the highest order, this is just the
Kohn Laplacian, $\square_b$ which, under 
the hypothesis of strict pseudoconvexity,
can be inverted 
by analyzing the operator on the
Heisenberg group, as in \cite{FoSt74},
or in the case of finite type by considering 
relations of 
commutators of the vector fields, $L_k$
and their conjugates, as in \cite{RoSt}. 
The problem in the 
case of weak pseudoconvexity is that
the
means to control
derivatives in the 
direction of $T$, namely through commutators of
the vector fields, $L_j$, 
with vector fields, $\Lb_k$, 
is no longer available.

One of the immediate difficulties in using
 the method of applying the boundary 
operator $\frac{1}{\sqrt{2}}N_1^- + iT^0$ 
as above leading to \eqref{NPlusT} is that
 the resulting highest order symbol,
\begin{equation*}
 \sigma\left(
  \frac{1}{2}(N_1^-)^2 + (T^0)^2
 \right)
\end{equation*}
is not elliptic.  It is missing estimates from
 below by the $\xi_{2n-1}$ transform variable.
In other words, an estimate of the form
\begin{equation*}
 \sigma\left(
 \frac{1}{2}(N_1^-)^2 + (T^0)^2
 \right)
   \gtrsim 1 + |\xi_{2n-1}|^2
\end{equation*}
for $|\xi_L| \gg 1$ does not hold.
 It still may be possible to obtain information
of the solution to \eqref{nMinusTUpOrig}
 if it were possible to obtain a lower
order estimate, an estimate of the
 first order terms of \eqref{NPlusT} of the
  form
 \begin{equation*}
 \label{missingEstimate}
  \sigma\left[\left(
	  \frac{1}{\sqrt{2}} N_1^- + iT^0
  \right)\circ \Upsilon^0_J \right]
   \gtrsim 1+ |\xi_{2n-1}|
 \end{equation*}
and use the missing first order estimate as a 
 (weaker) substitute for an elliptic
second order estimate.
 This idea is used in 
\cite{Eh18_pwSmth} to obtain
 (weighted) estimates of the boundary 
solution.
 
The aim of the next sections is to show how
 persistent the absence of ellipticity
in the boundary equation is.

\section{Variations of the $\square$
 operator}
\label{varSquare} 
 In this section we consider
operators obtained from the $\square$
 operator by adding additional terms.
 In particular, we let $\phi$ be 
a function supported near the boundary and
with 
 $\square_{\phi}
 = \mdbar \mdbar^{\ast}
 + \mdbar^{\ast}\mdbar\circ (1+\phi)$, 
 we consider the boundary value problem:
\begin{equation*}
\square_{\phi} u
= f
\end{equation*}
with the boundary conditions,
 \begin{equation}
\label{modBC}
\begin{aligned}
  u \rfloor \mdbar \rho =& 0,\\
 \mdbar  \big((1+\phi)u
\big) \rfloor \mdbar \rho =& 0,
\end{aligned}
\end{equation}
holding on $\partial\Omega$.
 The first condition ensures
$u\in \mbox{dom}(\mdbar^{\ast})$
 and the second that 
$\mdbar\big((1+\phi)u \big)
 \in \mbox{dom}(\mdbar^{\ast})$.
 
We first look at the case
 $\phi$ only depends on $\rho$:
$\phi=\phi(\rho)$, and
 $\phi(0)=0$, and we use the notation
  from the previous sections.
 In this case the condition
$\mdbar  \big((1+\phi)u
\big) \rfloor \mdbar \rho = 0$ can be written
\begin{equation*}
 \sum_k (1+\phi) \Lb_k u_{J_{\hat{k}}n }
  + (1+\phi) (-1)^{|J|}\Lb_n u_J
  +(-1)^{|J|}(\Lb_n \phi) u_J 
  + (1+\phi) c_{Jn}^J u_J
  =0 .
\end{equation*}
Combined with the first boundary condition,
 $u\rfloor \mdbar \rho =0$, and recalling
  $\phi(0)=0$, this yields
\begin{equation}
 \label{bndryRaw}
 \Lb_n u_J
 +(\Lb_n \phi) u_J 
 + (-1)^{|J|} c_{Jn}^J u_J=0 .
\end{equation}

At first sight, a hold on regularity appears
 possible, in light of the discussion at the
end of Section \ref{bndryEqn}, as the term
 $\Lb_n \phi$ allows for a strictly positive
(diagonal)
addition to the $\Upsilon^0_J$ operator.  
 We repeat the steps of the previous sections
to obtain an expression of \eqref{bndryRaw}
 in terms of the 
 complex tangential vector fields, $L_j$; as 
before, the main calculation concerns the
 DNO.    

To recall,
we write $u_J$ as a sum of solutions to 
 Dirichlet problems, the solutions written in 
terms of Green's operator and a Poisson 
 operator
(for the analogues to the systems,
 \eqref{GError} and \eqref{PError},
with $\square$ replaced by $\square_{\phi}$):
\begin{equation*}
 u = G^{\phi}(2f) + P^{\phi}
  (u_{b}).
\end{equation*}
 Also, we have
\begin{align*}
  \Lb_n P^{\phi}(u_b)\Big|_{\rho=0} = &
   \left( \frac{1}{\sqrt{2}} \partial_{\rho}
    -iT \right) P^{\phi}(u_b)\Big|_{\rho=0} \\
=& \frac{1}{\sqrt{2}}  N^{\phi,-} u_b 
   -iT^0 u_b,
\end{align*}
and
\begin{align*}
 \Lb_n G^{\phi}(2f) \Big|_{\rho=0}
  =& \frac{1}{\sqrt{2}} \partial_{\rho}
   G^{\phi}(2f) \Big|_{\rho=0}\\
  =& R\circ \Psi^{-1} f.
\end{align*}
We can now rewrite \eqref{bndryRaw} as
\begin{equation}
 \label{bndryExpnd}
\left(\frac{1}{\sqrt{2}}  N^{\phi,-}
-iT^0 \right) u_{bJ}
 + \left( \frac{1}{\sqrt{2}} \phi'(0)
  +(-1)^{|J|}c_{Jn}^J \right) u_{bJ}
  = R\circ \Psi^{-1}f.
\end{equation}

As in the case with $\phi\equiv 0$,
 the operator 
 $\frac{1}{\sqrt{2}}  N^{\phi,-}
 -iT^0$ is of first order, but 
  it is not elliptic
since its principal symbol,
\begin{equation*}
 \frac{1}{\sqrt{2}}  |\Xi(x,\xi)|
 + \xi_{2n-1}
\end{equation*}
 tends to 0 
as $\xi_{2n-1} \rightarrow -\infty$.
However, a non-vanishing 
zero order term in the 
symbol expansion of $N^{\phi,-}$ would,
 after composition with 
$\frac{1}{\sqrt{2}}  N^{\phi,-}
+iT^0 $
 lead to a first order term whose
symbol is non-vanishing in the
 support of $\psi^-$.  
 We thus examine the term
$\sigma_0(N^{\phi,-})$.  

For 
 $\phi=0$, we have Theorem 
  \ref{dno}.  In the case 
 $\phi=\phi(\rho) \neq 0$ we examine the changes
induced on the DNO.  
 To highest order, the operators
$\square_{\phi}$ and $\square$ are
 identical, so we determine the 
operators,
 $S_{\phi}$,
  $A_{\phi}$,
and $\tau_{\phi}$,
 (and their corresponding
 symbols) with
which $\square_{\phi}v$
  can be written 
 as in \eqref{DirProbCorrOps}
 in the interior of
   $\Omega$ as
\begin{equation*}
\Gamma v
+ \sqrt{2}S_{\phi} \left(\frac{\partial v}{\partial\rho}\right)
+ A_{\phi} v+\rho\tau_{\phi}(v)
=0.
\end{equation*}

We write the operator $\square_{\phi}$
 in local coordinates.  We have
\begin{equation*}
 \square_{\phi}
  = \square + \mdbar^{\ast}\mdbar \phi.
\end{equation*}
In
Proposition \ref{squareOp},
we examined the forms which, upon
action through the $\square$ operator
would result in terms with a certain
$\omegab_J$ component. 
We follow this same approach 
here for the operator $\square_{\phi}$
in order to obtain an expression 
for the DNO corresponding to the
$\square_{\phi}$ operator.  

We examine the
term $\mdbar^{\ast}\mdbar( \phi u)$.
We have
\begin{equation}
\label{dbarRhoT}
\mdbar ( \phi u_J\omegab_J )
= 
(-1)^{|J|} (\Lb_n\phi)u_J \omegab_J \wedge
\omegab_n
+\phi(\rho)
\sum_J (-1)^{|J|} ( \Lb_n u_J) \omegab_J \wedge
\omegab_n + \cdots,
\end{equation}
where the $\cdots$ refer to forms with no 
 $\omegab_n$ component, or, in the 
case $n\in J$ involve only the 
 $\Lb_j$ 
 operators for $j=1,\ldots,n-1$.

And from
\begin{equation*}
\mdbar^{\ast} v \omegab_J \wedge
\omegab_n=
\left( (-1)^{|J|} \left(-L_n + d_n  \right) +\overline{c}_{J \cup\{n\}}^J \right) v \omegab_{J}+\cdots,
\end{equation*}
 where here the $\cdots$ denote terms which
are orthogonal to $\omegab_J$,
we get
\begin{align*}
\mdbar^{\ast} \mdbar ( \phi& u_J\omegab_J )\\
=&\bigg( -\phi(\rho) L_n\Lb_n u_J -
 (\Lb_n\phi) L_n u_J
-
 (L_n\phi)\Lb_n u_J\\
&+ \phi(\rho)\left(
d_n + (-1)^{|J|} \overline{c}_{J \cup\{n\}}^J
\right)\Lb_n u_J\bigg) \omegab_J+\cdots\\
=&  \bigg(-\phi(\rho) L_n\Lb_n u_J
-
 \phi'(\rho) \partial_{\rho} u_J
+ \phi(\rho)\left(
d_n + (-1)^{|J|} \overline{c}_{J \cup\{n\}}^J
\right)\Lb_n u_J\bigg)\omegab_J + \cdots.
\end{align*}
Again, for the error terms we include all
 0 order terms, terms orthogonal
to $\omegab_J$, and terms involving only
$L_j$ and/or $\Lb_j$ for $j=1,\ldots, n-1$
 in the $\cdots$.

From \eqref{dstard},
we have
\begin{equation*}
\label{extraTerm}
\mdbar^{\ast} \mdbar ( \phi(\rho)
u_{kl}
\omegab_{J_{\hat{k}}\cup\{l\}}
)
=  -\phi(\rho) \varepsilon^{lJ}_{J\cup\{l\}}
\varepsilon^{kJ_{\hat{k}}\cup \{l\}}_{J\cup \{l\}} L_l \Lb_k u_{kl}\omegab_J+\cdots
\end{equation*}
for $l\neq n$ and $J\not \owns n$,
 where here the $\cdots$ refer to terms
which are of the form $O(\rho) L_j
 + O(\rho)\Lb_j$, or are of 
order 0, or are terms orthogonal to 
 $\omegab_J$.
In the case $l=n$ we have
\begin{multline*}
\mdbar^{\ast} \mdbar ( \phi
u_{kn}
\omegab_{J_{\hat{k}}\cup\{n\}}
)
=\\
-(-1)^{|J|}
 \varepsilon^{kJ_{\hat{k}}}_{J}
\frac{1}{\sqrt{2}}\phi'(\rho)
\Lb_k  u_{kn}\omegab_J
-(-1)^{|J|}\varepsilon^{kJ_{\hat{k}}}_{J}
 \phi(\rho) L_n \Lb_k  u_{kn}
 \omegab_J +\cdots.
\end{multline*}

From these calculations, we see that
in a small neighborhood of a boundary point
$p\in \partial\Omega$,
 for which again we assume $p=0$,
the equation $2\square_{\phi}v=0$
corresponding to forms for which
$v_J=0$ if $n\in J$ can be written
\begin{align*}
- \Bigg(
\frac{\partial^2}{\partial\rho^2}+
\frac{1}{2} \sum_j \frac{\partial ^2}{\partial x_j^2}
+&
2 \frac{\partial ^2}{\partial x_{2n-1}^2}
+  \sum_{j,k=1}^{2n-1} l_{jk}
\frac{\partial ^2}
{\partial x_j\partial x_k}
\Bigg)v\\
& + \sqrt{2}S_{\phi} \left(\frac{\partial v}{\partial\rho}\right)
+ A_{\phi}  v+\rho\tau_{\phi}(v)
=0,
\end{align*}
where
\begin{equation*}
 S_{\phi} = S  
   - \sqrt{2}
  \phi'(\rho) 
  + O(\rho),
\end{equation*}
\begin{equation*}
 A_{\phi}
  = A +O(\rho) ,
\end{equation*}
and
\begin{equation*}
  \tau_{\phi}
   = \tau -2\phi(\rho) L_n\Lb_n
    + \cdots
\end{equation*}
where $S$, $A$, and $\tau$ are the operators 
 from Section \ref{dnosec}, and the 
$\cdots$ in the expression for $\tau_{\phi}$
 refer to second order terms which are
  $O(\rho)$, and are compositions with at least
one $L_k$ or $\Lb_k$ for $k\in 
 \{ 1,\ldots,n-1\}$ (this also holds true in
the case $n\in J$, although is not needed).

 We now examine the contributions from the
 $\phi$ function to the DNO.  
Using Lemma \ref{liglem}
 the $O(\rho)$ terms of the operator 
$S_{\phi}$ and $A_{\phi}$ 
 above lead to operators of order
 $-1$ and lower.  
From Theorem \ref{dno} we see the symbol
$s_0(x)$ for the DNO corresponding to 
 $2\square$ should be replaced with 
$s_0(x) - \sqrt{2} \phi'(0)$.

For the contributions from the $\tau_{\phi}$
 operator we expand
$\phi(\rho) = \phi'(0)\rho + O(\rho^2)$
and look at the terms
\begin{equation*}
 -2 \phi'(0)\rho \left(
 \frac{1}{2} \partial_{\rho}^2 
   +T^2
 \right)
\end{equation*}
coming from 
 $-2\phi(\rho) L_n\Lb_n$ in $\tau_{\phi}$.  
A term $\rho \partial_{\rho}^2 v$ can be written
using transforms, assuming 
 the support of $v$ is contained in a 
small coordinate patch around $0\in\partial
 \Omega$, as
\begin{equation*}
 \nonumber
 \rho \frac{1}{(2\pi)^{2n}}\int
 \left( - \eta^2 \widehat{v}(\xi,\eta)
 + \partial_{\rho} \widetilde{v}(\xi,0)
 + i \eta \widetilde{g}_b(\xi)
 \right) e^{ix\xi} e^{i\rho\eta} d\xi d\eta.
\end{equation*}
 Since
$\rho\cdot\delta(\rho)\equiv 0$, we have
\begin{equation*}
\rho 
\int \partial_{\rho}
 \widetilde{v}
(\xi,0)
e^{i\rho\eta} e^{ix\cdot \xi}
d\xi d\eta \equiv 0
\end{equation*}
in the term above,
 and 
\begin{equation}
 \label{extraTau}
\rho\partial_{\rho}^2 v =
  \rho \frac{1}{(2\pi)^{2n}}\int\left(
 - \eta^2\widehat{v}(\xi,\eta) 
  +i  \eta \widetilde{g}_b(\xi)\right)
e^{ix\xi} e^{i\rho\eta} d\xi d\eta.
\end{equation}

We examine first the term
 $\rho  \int \eta^2 \widehat{v} e^{ix\xi} e^{i\rho\eta} d\xi d\eta$, recalling that
$v$ can be written as 
 $v=\Theta^+ g $ modulo lower order terms:
\begin{align*}
 \rho  \frac{1}{(2\pi)^{2n}} \int\left(
- \eta^2\widehat{v}(\xi,\eta) 
\right)
&e^{ix\xi} e^{i\rho\eta} d\xi d\eta\\
=&-  
\rho \frac{i}{(2\pi)^{2n}}
\int
\frac{\eta^2}
{\eta+i|\Xi(x,\xi)|} \widetilde{g}_b(\xi)
e^{i\rho\eta} e^{i x\xi} d\eta d\xi
\\
=& -  
 \frac{1}{(2\pi)^{2n}}
\int
\frac{\eta^2}
{(\eta+i|\Xi(x,\xi)|)^2} \widetilde{g}_b(\xi)
e^{i\rho\eta} e^{i x\xi} d\eta d\xi
\\
&+ \frac{1}{(2\pi)^{2n}}
\int
\frac{2\eta}
{\eta+i|\Xi(x,\xi)|} \widetilde{g}_b(\xi)
e^{i\rho\eta} e^{i x\xi} d\eta d\xi,
\end{align*}
modulo lower order terms and smooth terms
 (of the form $R^{-\infty}$).
 In the calculation of the DNO, the above term
contributes 
\begin{equation}
 \label{contN2}
 2|\Xi(x,\xi)| \Gamma^{-1}_{int}
  \circ \phi'(0) \rho  
   \circ F.T.^{-1} \left(
   \eta^2 \widehat{v} \right)
\end{equation}
(see the calculation preceding Proposition
 \ref{L0Prop}).
We thus need
\begin{align*}
\phi'(0)
\Gamma^{-1}_{int}
\circ \rho  &
 F.T.^{-1} \left(
\eta^2 \widehat{v} \right)\\
=&  \frac{\phi'(0)}{(2\pi)^{2n}}
\int \frac{1}{\eta^2+\Xi^2(x,\xi)}
\frac{\eta^2}
{(\eta+i|\Xi(x,\xi)|)^2} \widetilde{g}_b(\xi)
e^{i\eta\rho} e^{i\xi x} d\eta d\xi
\\
&-\frac{\phi'(0)}{(2\pi)^{2n}}
\int \frac{1}{\eta^2+\Xi^2(x,\xi)}
\frac{2\eta}
{\eta+i|\Xi(x,\xi)|} \widetilde{g}_b(\xi)
e^{i\eta\rho} e^{i\xi x} d\eta d\xi
\\
=&  \frac{\phi'(0)}{(2\pi)^{2n}}
\int
\frac{1}{\eta-i|\Xi(x,\xi)|}
\frac{\eta^2}{\left(\eta+i|\Xi(x,\xi)|\right)^3}
 \widetilde{g}_b(\xi)
e^{i\eta\rho} e^{i\xi x} d\eta d\xi
\\
&-\frac{\phi'(0)}{(2\pi)^{2n}}
\int \frac{1}{\eta-i|\Xi(x,\xi)|}
\frac{2\eta}
{(\eta+i|\Xi(x,\xi)|)^2} \widetilde{g}_b(\xi)
e^{i\eta\rho} e^{i\xi x} d\eta d\xi,
\end{align*}
again, modulo lower order terms
 and smooth terms.
Integrating over $\eta$ and
setting $\rho=0$ yields
\begin{align*}
 \frac{\phi'(0)}{8}   
  \frac{1}{(2\pi)^{2n-1}}
  \int \frac{\widetilde{g}_b(\xi)}
{|\Xi(x,\xi)|}
&
e^{i\xi x}  d\xi - 
\frac{\phi'(0)}{2} 
 \frac{1}{(2\pi)^{2n-1}}     \int \frac{\widetilde{g}_b(\xi)}
{|\Xi(x,\xi)|}
e^{i\xi x}  d\xi,
\end{align*}
modulo $\Psi^{-2}_bg_b$ and smoothing terms.
 When setting the terms of 
order $-1$ in the $|\Xi(x,\xi)|$ factors
 equal as we did to show Proposition
 \ref{L0Prop}, we are led to the symbols
\begin{equation*}
  \frac{\phi'(0)}{4}   
  - 
 \phi'(0) 
 =   -\frac{3}{4}\phi'(0)   .
\end{equation*}
 for the contribution of \eqref{contN2}
in the DNO for the operator $2\square_{\phi}$.

We further need the contribution of the boundary
 term, $g_b$ in \eqref{extraTau} to the
DNO.  Similar to above, the contribution comes
 through
\begin{equation*}
 - 2|\Xi(x,\xi)| \phi'(0)
 \Gamma^{-1}_{int}
 \circ \rho  
 F.T.^{-1} \big(
 i\eta
  \widetilde{g}_b(\xi) \big)
\end{equation*}
 for which we have
\begin{align*}
 \phi'(0)
\Gamma^{-1}_{int}
\circ \rho & 
F.T.^{-1} \big(
i\eta
\widetilde{g}_b(\xi) \big) \Bigg|_{\rho=0}\\
 = &
 - \frac{\phi'(0)}{(2\pi)^{2n}}
  \int
\frac{2\eta^2}{(\eta^2+\Xi^2(x,\xi))^2}
 \widetilde{g}_b(\xi)
   e^{i\xi x} d\eta d\xi\\
 =&
  - \frac{\phi'(0)}{2}
  \frac{1}{(2\pi)^{2n-1}}
 \int 
\frac{\widetilde{g}_b(\xi)}{|\Xi(x,\xi)|}
 e^{i\xi x} d\eta d\xi,
\end{align*}
modulo lower order and smoothing terms.

As in the calculations of 
 Theorem \ref{dno} the 
$-2\phi'(0)\rho T^2$ terms lead to a term
 with
 symbol
\begin{equation*}
 - \frac{\phi'(0)}{2}
  \frac{\xi_{2n-1}^2}{\Xi^2(x,\xi)}
\end{equation*}
 in the DNO.

We note the $O(\rho)$ second order
 terms with at least one of 
$L_k$ or $\Lb_k$ with $k\in\{1,\ldots,n-1\}$
 lead to terms
$O\left(\frac{|\xi_L|}{|\Xi(x,\xi)|}
 \right)$.
Therefore, the contributions from
 the operator
$\tau_{\phi}$ in addition to those from 
 $\tau$ are given by
adding 
\begin{equation*}
-	\frac{3}{4}\phi'(0)
+\phi'(0)
	- \frac{\phi'(0)}{2}
	\frac{\xi_{2n-1}^2}{\Xi^2(x,\xi)} 
\end{equation*}
to the DNO for $2\square$.
 Note this term tends to 0 as 
$\xi_{2n-1} \rightarrow -\infty$.

 We thus have the following 
description of the DNO in a microlocal
neighborhood in the support of $\psi^-$:
\begin{prop}
\label{DNOcomp}
Modulo pseudodifferential operators of order $-1$, the symbol for
$N^{\phi,-}$ is given by
\begin{align*}
\sigma(N^{\phi,-}) (x,\xi)
	=& \sigma(N^-) (x,\xi)
	- \phi'(0) + O\left(\frac{|\xi_L|}{|\Xi(x,\xi)|}
	\right) .
	\end{align*}
\end{prop}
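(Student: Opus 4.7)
The approach is to re-run the DNO derivation of Theorem \ref{dno} with $\square$ replaced by $\square_\phi$ and compare term by term. Writing $\square_\phi = \square + \mdbar^\ast \mdbar \phi$ and expanding $\mdbar^\ast \mdbar(\phi u)$ in local coordinates (using $\phi(0)=0$), one finds that $\Gamma$ is unchanged while the operators in \eqref{DirProbCorrOps} are modified to
\begin{equation*}
 S_\phi = S - \sqrt{2}\,\phi'(\rho) + O(\rho),\qquad A_\phi = A + O(\rho),\qquad \tau_\phi = \tau - 2\phi(\rho)L_n\Lb_n + \cdots,
\end{equation*}
where the $\cdots$ in $\tau_\phi$ consists of $O(\rho)$ second-order operators that involve at least one complex-tangential $L_k$ or $\Lb_k$ with $k<n$. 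By Lemma \ref{liglem} the $O(\rho)$ pieces of $S_\phi$ and $A_\phi$ contribute only to order $\le -1$, and the $\cdots$ in $\tau_\phi$ produce contributions of size $O(|\xi_L|/|\Xi(x,\xi)|)$. Hence only the substitution $s_0 \mapsto s_0 - \sqrt{2}\,\phi'(0)$ and the new principal term $-2\phi(\rho)L_n\Lb_n$ can move the zero-order symbol of the DNO.

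The substitution $s_0 \mapsto s_0 - \sqrt{2}\,\phi'(0)$ enters Proposition \ref{L0Prop} through the $\tfrac{\sqrt{2}}{2}s_0$ factor and contributes an additional $-\phi'(0)$. For the $-2\phi(\rho)L_n\Lb_n$ piece I would expand $\phi(\rho) = \phi'(0)\rho + O(\rho^2)$ and use the Section \ref{SecZero} decomposition $-2L_n\Lb_n = -\partial_\rho^2 - 2T^2 + \cdots$ to isolate the effective additional second-order contribution $-2\phi'(0)\rho(\tfrac{1}{2}\partial_\rho^2 + T^2)$. The $\rho T^2$ part is structurally identical to the $\tau^{jk}$ term in Proposition \ref{L0Prop} and yields a symbol contribution proportional to $\xi_{2n-1}^2/\Xi^2(x,\xi)$. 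The $\rho\partial_\rho^2$ part is new: taking $\partial_\rho^2$ of the extension-by-zero of $v$ produces a bulk piece $-\eta^2\widehat{v}$ and a boundary piece $i\eta\widetilde{g}_b$, while the third formal boundary term vanishes because $\rho\,\delta(\rho)\equiv 0$. Each surviving piece has to be composed with $\Gamma^{-1}_{int}$ and restricted to $\rho=0$ by closing the $\eta$-contour, exactly as in the derivation of Proposition \ref{L0Prop}; representing $v$ to leading order by $\Theta^+ g$ keeps the computation on the same footing as the earlier one.

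Summing the three additional contributions and passing to the microlocal region $\xi_{2n-1}\to -\infty$ supported by $\psi^-$, one uses
\begin{equation*}
 \frac{\xi_{2n-1}^2}{\Xi^2(x,\xi)} = \frac{1}{2} + O\!\left(\frac{|\xi_L|^2}{\Xi^2(x,\xi)}\right) + O(x),
\end{equation*}
to collapse the $\xi_{2n-1}^2/\Xi^2$-dependent terms. The three added symbols then telescope to the constant $-\phi'(0)$ modulo an error of order $O(|\xi_L|/|\Xi(x,\xi)|)+O(x)$, and the claim follows by comparison with Theorem \ref{dno}.

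The main obstacle is the careful bookkeeping of the new $\rho\partial_\rho^2$ contribution: under the extension-by-zero convention it splits into bulk and boundary pieces whose signs and prefactors must be tracked correctly through several applications of $\Gamma^{-1}_{int}$ and residue integrations; only after combining these with the $S_\phi$-contribution and the $\rho T^2$-contribution, and using the asymptotic $\xi_{2n-1}^2/\Xi^2 \to 1/2$, does the clean final answer $-\phi'(0)$ emerge.
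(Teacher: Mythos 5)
Your proposal follows essentially the same route as the paper: the same identification of $S_{\phi}$, $A_{\phi}$, $\tau_{\phi}$, the same $-\phi'(0)$ shift coming through the $\tfrac{\sqrt{2}}{2}s_0$ term, and the same treatment of $-2\phi'(0)\rho\left(\tfrac{1}{2}\partial_{\rho}^2+T^2\right)$ via the bulk/boundary splitting with $\rho\,\delta(\rho)\equiv 0$, composition with $\Gamma^{-1}_{int}$, and residue restriction to $\rho=0$. The one step you leave implicit --- the explicit prefactors, which in the paper come out as $-\tfrac{3}{4}\phi'(0)+\phi'(0)-\tfrac{1}{2}\phi'(0)\,\xi_{2n-1}^2/\Xi^2(x,\xi)$ and indeed cancel modulo $O(|\xi_L|/|\Xi(x,\xi)|)+O(x)$ since $\xi_{2n-1}^2/\Xi^2\to\tfrac{1}{2}$ --- is exactly the bookkeeping the paper carries out, and it confirms the telescoping you assert.
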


 Returning to the boundary conditions,
we see how the additional terms 
 from the DNO coming from the added
$\phi(\rho)$ function affect the boundary 
 equations \eqref{modBC}.
 The first condition,
$u\rfloor \mdbar\rho$ remains the same, and
 is equivalent to 
$u_{J} = 0$ if $n\in J$.

We recall the second condition written as in
 \eqref{bndryExpnd}:
\begin{equation*}
\left(\frac{1}{\sqrt{2}}  N^{\phi,-}
-iT^0 \right) u_{bJ}
+ \left( \frac{1}{\sqrt{2}} \phi'(0)
+(-1)^{|J|}c_{Jn}^J \right) u_{bJ}
= R\circ \Psi^{-1}(f).
\end{equation*}
From Proposition \ref{DNOcomp} we can
 write
$\sigma (N^{\phi,-}) = \sigma (N^-)
  -\phi'(0) + 
O\left(\frac{|\xi_L|}{|\Xi(x,\xi)|}
\right),$ modulo lower order
 symbols.  In particular, the
 $\phi'(0)$ term in the boundary equation
cancels with that coming from the DNO.  We can
 state the
\begin{thrm}
	Let $\square_{\phi}
	= \mdbar \mdbar^{\ast}
	+ \mdbar^{\ast}\mdbar\circ (1+\phi)$.
Let $\phi=\phi(\rho)$ be a smooth
 function which depends only on the
defining function, with the
 property $\phi(\rho) = O(\rho)$.
The condition $\mdbar\circ (1+\phi) u \in
 \mbox{dom}(\mdbar^{\ast})$, equivalent to
$\mdbar  \big((1+\phi)u
\big) \rfloor \mdbar \rho = 0$ on 
 $\partial\Omega$, has the form 
\begin{equation*}
\left( \frac{1}{\sqrt{2}} N^-_1
-i T^0\right) u_{b,J} +
\Upsilon^0_J u_{b}
=  R\circ \Psi^{-1}f
\end{equation*} 
 as in \eqref{nMinusTUpOrig} of Proposition
 \ref{propNMinusT}, with
$\Upsilon^0_J$ sharing the same properties
 as those of \eqref{upDef}, \eqref{upProp1}
  and \eqref{upProp2}.
\end{thrm}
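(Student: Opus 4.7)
The plan is to chain together three ingredients already assembled in the paper: the algebraic reduction of the modified boundary condition that was carried out in \eqref{bndryRaw}--\eqref{bndryExpnd}, the symbol identity for $N^{\phi,-}$ in Proposition \ref{DNOcomp}, and the definition of $\Upsilon^0_J$ from Proposition \ref{propNMinusT}. The whole claim then reduces to recognizing a clean cancellation between a $\phi'(0)$ term produced by the boundary condition and an equal and opposite term hidden in the zero-order symbol of the new DNO.

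First I would re-derive \eqref{bndryExpnd}. Using $\phi(0)=0$, the condition $u\rfloor\mdbar\rho = 0$ still forces $u_J = 0$ on $\partial\Omega$ whenever $n\in J$, so expanding $\mdbar\bigl((1+\phi)u\bigr)\rfloor\mdbar\rho = 0$ and restricting to the boundary collapses the condition, for $J\not\owns n$, to the pointwise relation
\begin{equation*}
\Lb_n u_J + (\Lb_n\phi)\,u_J + (-1)^{|J|} c^J_{Jn}\,u_J = 0.
\end{equation*}
Writing $u = G^{\phi}(2f) + P^{\phi}(u_b)$, applying $\Lb_n = \tfrac{1}{\sqrt{2}}\partial_\rho - iT$, and using $R\circ\partial_\rho\circ G^\phi_J \equiv R\circ\Psi^{-1}$ (as in the analysis preceding \eqref{bndryExpnd}) produces
\begin{equation*}
\left(\tfrac{1}{\sqrt{2}}N^{\phi,-} - iT^0\right)u_{b,J} + \left(\tfrac{1}{\sqrt{2}}\phi'(0) + (-1)^{|J|} c^J_{Jn}\right)u_{b,J} = R\circ\Psi^{-1}f,
\end{equation*}
which is precisely \eqref{bndryExpnd}.

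Next I would substitute the symbol identity of Proposition \ref{DNOcomp}, $\sigma(N^{\phi,-}) = \sigma(N^-) - \phi'(0) + O\!\left(|\xi_L|/|\Xi(x,\xi)|\right)$ modulo $\Psi^{-1}_b$. The top-order part of $N^{\phi,-}$ agrees with $N^-_1$ because the leading symbols of $\square_\phi$ and $\square$ coincide. The zero-order part of $\tfrac{1}{\sqrt{2}}N^{\phi,-}$ is therefore $\tfrac{1}{\sqrt{2}}N^-_0 - \tfrac{1}{\sqrt{2}}\phi'(0) + O(|\xi_L|/|\Xi|)$, and the $\tfrac{1}{\sqrt{2}}\phi'(0)$ contribution cancels the matching $+\tfrac{1}{\sqrt{2}}\phi'(0)$ coming from $\Lb_n\phi$. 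What remains on the zero-order side is exactly $(-1)^{|J|} c^J_{Jn} + \tfrac{1}{\sqrt{2}} N^-_0$, which is the very combination that Proposition \ref{propNMinusT} identifies as $\Upsilon^0_J$, together with the error class already allowed by \eqref{upProp1}--\eqref{upProp2}.

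The main (mild) obstacle is bookkeeping: one must check that the extra $O(|\xi_L|/|\Xi|)$ symbol produced by Proposition \ref{DNOcomp} can genuinely be absorbed into the same error class that defines $\Upsilon^0_J$, and that the $O(\rho)$ modifications of $S_\phi$, $A_\phi$, and $\tau_\phi$ that entered the derivation of Proposition \ref{DNOcomp} do not leak first-order contributions into the zero-order term. Both points follow from Lemma \ref{liglem} (which degrades any $\rho$-factor by one pseudodifferential order) together with the hypothesis $\phi(\rho) = O(\rho)$, so all $\phi$-dependent terms other than the single cancellation above sit in $\Psi^{-1}_b$ or in the $O(|\xi_L|/|\Xi|)$ error, as required.
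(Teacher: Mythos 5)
Your proposal is correct and follows essentially the same route as the paper: derive \eqref{bndryExpnd} from the modified boundary condition, insert Proposition \ref{DNOcomp}, observe the cancellation of the $\tfrac{1}{\sqrt{2}}\phi'(0)$ terms, and identify the remaining zero-order operator with $\Upsilon^0_J$ from Proposition \ref{propNMinusT}. One small caveat: the substance behind your final ``bookkeeping'' paragraph is not Lemma \ref{liglem} alone --- the $\rho$-weighted second-order terms in $\tau_\phi$ do produce genuine zero-order contributions to the DNO, which vanish only microlocally on the support of $\psi^-$ after the explicit residue computations --- but since that is precisely the content of Proposition \ref{DNOcomp}, which you cite as an input, your argument for the theorem itself is complete.
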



\begin{thebibliography}{10}
	
	\bibitem{CNS92}
	D.~C. Chang, A.~Nagel, and E.~Stein.
	\newblock Estimates for the $\bar{\partial}$-{Neumann} problem in pseudoconvex
	domains of finite type in $\mathbb{C}^2$.
	\newblock {\em Acta Math.}, 169:153--228, 1992.
	
	\bibitem{Ch91}
	M.~Christ.
	\newblock On the $\bar{\partial}$ equation for three-dimensional {CR}
	manifolds.
	\newblock {\em Proc. Sympos. Pure Math.}, 52(3):63--82, 1991.
	
	\bibitem{Eh18_dbarDbarb}
	D.~Ehsani.
	\newblock Exact regularity of the $\mdbar$-problem with dependence on the
	$\mdbar_b$-problem on weakly pseudoconvex domains in $\mathbb{C}^2$.
	\newblock Preprint.
	
	\bibitem{Eh18_halfPlanes}
	D.~Ehsani.
	\newblock Pseudodifferential analysis on domains with boundary.
	\newblock Preprint.
	
	\bibitem{Eh18_pwSmth}
	D.~Ehsani.
	\newblock Weighted estimates for the \dbar-neumann problem on intersection
	domains in $\mathbb{C}^2$.
	\newblock Preprint.
	
	\bibitem{FoSt74}
	G.~Folland and E.~Stein.
	\newblock Estimates for the $\bar{\partial}_b$ complex and analysis on the
	{Heisenberg} group.
	\newblock {\em Comm. Pure Appl. Math.}, 27:429--522, 1974.
	
	\bibitem{KoNi06}
	J.~Kohn and A.~Nicoara.
	\newblock The $\bar{\partial}_b$ equation on weakly pseudoconvex {CR} manifolds
	of dimension 3.
	\newblock {\em J. Funct. Anal.}, 230(2):251--272, 2006.
	
	\bibitem{K85}
	J.J. Kohn.
	\newblock Estimates for $\bar{\partial}_b$ on pseudoconvex {CR} manifolds.
	\newblock {\em Proc. Sympos. Pure Math.}, 43:207--217, 1985.
	
	\bibitem{LiMa}
	J.-L. Lions and E.~Magenes.
	\newblock {\em Non-homogeneous boundary value problems and applications},
	volume~I.
	\newblock Springer-Verlag, New York, 1972.
	
	\bibitem{Ni06}
	A.~Nicoara.
	\newblock Global regularity for $\bar{\partial}_b$ on weakly pseudoconvex {CR}
	manifolds.
	\newblock {\em Adv. Math.}, 199:356--447, 2006.
	
	\bibitem{RoSt}
	L.~Rothschild and E.~Stein.
	\newblock Hypoelliptic differential operators and nilpotent groups.
	\newblock {\em Acta Math.}, 137:248--320, 1976.
	
	\bibitem{Tr}
	F.~Treves.
	\newblock {\em Introduction to Pseudodifferential and Fourier Integral
		Operators}.
	\newblock The University Series in Mathematics. Plenum Press, 1980.
	
\end{thebibliography}
\end{document}